\newtheorem{theorem}{Theorem}[section]
\newtheorem{proposition}[theorem]{Proposition}
\newtheorem{lemma}[theorem]{Lemma}
\newtheorem{corollary}[theorem]{Corollary}
\theoremstyle{definition}
\newtheorem{definition}[theorem]{Definition}
\newtheorem{remark}[theorem]{Remark}
\numberwithin{equation}{section}
\begin{document}

\baselineskip=15pt

\title[Branched projective structures and branched ${\rm SO}(3,{\mathbb C})$-opers]{Branched
projective structures, branched ${\rm SO}(3,{\mathbb C})$-opers and
logarithmic connections on jet bundle}

\author[I. Biswas]{Indranil Biswas}

\address{School of Mathematics, Tata Institute of Fundamental
Research, Homi Bhabha Road, Mumbai 400005, India}

\email{indranil@math.tifr.res.in}

\author[S. Dumitrescu]{Sorin Dumitrescu}

\address{Universit\'e C\^ote d'Azur, CNRS, LJAD, France}

\email{dumitres@unice.fr}

\subjclass[2010]{51N15, 58A20, 30F30, 14H60, 16S32}

\keywords{Branched projective structure, branched ${\rm SO}(3,{\mathbb C})$-oper, differential
operator, logarithmic connection}

\date{}

\begin{abstract} 
We study the branched holomorphic projective structures on a compact Riemann surface $X$ with a
fixed branching divisor $S\, =\, \sum_{i=1}^d x_i$, where $x_i \,\in\, X$ are distinct points.
After defining branched ${\rm SO}(3,{\mathbb C})$--opers, we show 
that the branched holomorphic projective structures on $X$ are in a natural bijection
with the branched ${\rm SO}(3,{\mathbb C})$--opers singular at $S$. It is deduced 
that the branched holomorphic projective structures on $X$ are also identified
with a subset of the space of all logarithmic connections
on $J^2((TX)\otimes {\mathcal O}_X(S))$ singular over $S$, 
satisfying certain natural geometric conditions.
\end{abstract}

\maketitle

\tableofcontents

\section{Introduction}\label{se1}

The role of the space of holomorphic projective structures in the understanding of the uniformization theorem of 
Riemann surfaces was emphasized by many authors (see, for instance, \cite{Gu,St} and references therein).
Recall that a holomorphic projective structure on a Riemann surface $X$ is given by (the equivalence class of) an 
atlas $\{(U_i,\, \phi_i)\}_{i\in I}$ with local charts $X\,\supset \, U_i\, \stackrel{\phi_i}{\longrightarrow}\, {\mathbb C}{\mathbb P}^1$, $i\, \in\, I$,
such that all the transition maps $\phi_j\circ \phi^{-1}_i$
are restrictions of elements in the M\"obius group $\text{PGL}(2,{\mathbb C})$ of complex projective 
transformations (see Section \ref{se2.1} for more details).

Historically, the idea of using holomorphic projective structures to prove uniformization theorem for Riemann 
surfaces came from the study of second-order linear differential equations (see, \cite[Chapter VIII]{St}). The modern 
point of view summarizes this equivalence between holomorphic projective connections and second-order linear 
differential equations as an identification of the space of holomorphic projective connections with the space of 
${\rm PGL}(2,{\mathbb C})$--opers (see \cite[Section I.5]{De} and \cite{BeDr}).

A more general notion is that of a branched holomorphic projective structure which was introduced and studied by 
Mandelbaum in \cite{Ma1, Ma2}; more recently, a general notion of (non necessarily flat) branched Cartan geometry on complex manifolds was 
introduced and studied in \cite{BD}. A branched holomorphic projective structure is defined by (the equivalence 
class of) a holomorphic atlas with the local charts being finite branched coverings of open subsets in ${\mathbb 
C}{\mathbb P}^1$, while the transition maps are restrictions of elements in the M\"obius group $\text{PGL}(2,{\mathbb 
C})$ (see Section \ref{se5.1} for details). The branching divisor $S$ of a branched projective structure on $X$ is 
the union of points in $X$ where the local projective charts admit a ramification point. The Riemann surface $X 
\setminus S$ inherits a holomorphic projective structure in the classical sense. Branched holomorphic projective 
structures play an important role in the study of hyperbolic metrics with conical singularities on Riemann surfaces 
or in the study of codimension one transversally projective holomorphic foliations (see, for example, \cite{CDF}).

From a global geometric point of view, a holomorphic projective structure over a Riemann surface $X$ is known to
give a flat ${\mathbb C}{\mathbb P}^1$-bundle over $X$ together with a holomorphic section which is transverse to 
the horizontal distribution defining the flat connection on the bundle. For a branched holomorphic projective 
structure with branching divisor $S$, we have a similar description, but the section of
the projective bundle fails to be transverse 
to the horizontal distribution (defining the flat structure) precisely at points in $S$ (see, for example, \cite{BDG, 
CDF, GKM, LM} or Section \ref{se5.1} here).

In this article we study the branched holomorphic projective structures on a Riemann surface $X$ with fixed 
branching divisor $S$. Throughout, we assume that $S$ is reduced, meaning that $S\, :=\, \sum_{i=1}^d x_i$, 
with $x_i \,\in\, X$, distinct points.

The presentation is organized in the following way. Sections \ref{se2}, \ref{se3} and \ref{sec4} deal with the 
geometry of classical holomorphic projective connections and Sections \ref{se5}, \ref{sec6} and \ref{sec7} are about 
branched holomorphic projective connections. More precisely, Section \ref{se2} presents the geometrical setting of a 
holomorphic projective structure on a Riemann surface $X$ and proves that
a projective structure induces a holomorphic connection on the 
rank three holomorphic 2-jet bundle $J^2(TX)$. Moreover this connection has a natural geometric behavior: it is an 
oper connection (see Definition \ref{def1}) with respect to the canonical filtration of $J^2(TX)$ induced by the 
kernels of the natural forgetful projections of jet bundles $J^2(TX)\,\longrightarrow\,
J^1(TX)$ and $J^2(TX)\,\longrightarrow\, TX$. In Section 
\ref{se3} we identify the space of holomorphic projective structures on the Riemann surface $X$ with the space of 
oper connections on $J^2(TX)$ satisfying some extra natural geometrical properties (see Corollary \ref{cor1}). 
Section \ref{sec4} translates the previous equivalence as an identification of the space of holomorphic projective 
connections with the space of ${\rm SO}(3,{\mathbb C})$--opers.

Section \ref{se5} starts the study of branched holomorphic projective structures. It is shown that a branched 
projective structure on a Riemann surface $X$ with branching divisor $S$ gives rise to a logarithmic connection on 
the rank two 2-jet bundle $J^2((TX)\otimes {\mathcal O}_X(S))$ singular over $S$, with residues at $S$ satisfying 
certain natural geometric conditions with respect to the canonical filtration of $J^2((TX)\otimes {\mathcal O}_X(S))$ 
(see Proposition \ref{prop3}).

The main results proved here are obtained in Section \ref{sec6} and in Section \ref{sec7}.

In Section \ref{se6.1} we
introduce the notion of a branched ${\rm SO}(3,{\mathbb C})$--oper singular at the divisor $S$. We 
show that the space ${\mathcal P}_S$ of branched holomorphic projective structures with fixed branching divisor $S$ 
is naturally identified with the space of branched ${\rm SO}(3,{\mathbb C})$--opers singular at $S$ (see Theorem 
\ref{thm1}).

We deduce that this space $\mathcal P_S$ also coincides with a subset of the set of all logarithmic connections with 
singular locus $S$, satisfying certain natural geometric conditions, on the rank three holomorphic $2$-jet bundle 
$J^2((TX)\otimes {\mathcal O}_X(S))$ (see Proposition \ref{prop4} and Theorem \ref{thm2}).

The above mentioned Theorem \ref{thm2} generalizes the main result in \cite{BDG} (Theorem 5.1) where, under the additional 
assumption that the degree $d$ of $S$ is even and such that $d \,\neq\, 2g-2$ (with $g$ the genus of $X$), it was 
proved that ${\mathcal P}_S$ coincides with a subset of the set of all logarithmic connections with singular locus 
$S$, satisfying certain geometric conditions, on the rank two holomorphic jet bundle $J^1(Q)$, where $Q$ is a fixed 
holomorphic line bundle on $X$ such that $Q^{\otimes 2}\,=\, TX\otimes {\mathcal O}_X(S)$. It may
be mentioned that for a branching divisor $S$ of general degree $d$, the bundle $Q$
considered in \cite{BDG} does not exist.

Let us clarify an important improvement in the methods developed in this work with respect to those in \cite{BDG}. 
The $\text{PSL}(2,{\mathbb C})$-monodromy of a (unbranched) holomorphic projective structure on a compact Riemann 
surface $X$ always admits a lift to $\text{SL}(2,{\mathbb C})$ (see \cite[Lemma 1.3.1]{GKM}). Geometrically 
this means that the associated flat ${\mathbb C}{\mathbb P}^1$-bundle over $X$ is the projectivization of a rank 
two vector bundle over $X$; hence one can work in the set-up of rank two vector bundles. This is not true anymore for 
branched holomorphic projective structures if the degree of the branching divisor is odd; for this reason the results 
and methods in \cite{BDG} cannot be extended to the case where the branching divisor of the branched holomorphic 
projective structure is of odd degree.

Here we consider ${\rm SO}(3,{\mathbb C})$--opers instead of
the equivalent $\text{PSL}(2,{\mathbb C})$-opers and we develop the 
notion of branched ${\rm SO}(3,{\mathbb C})$--opers. This enables us to investigate branched holomorphic 
projective structure in the framework of rank three holomorphic (2-jet) vector bundles instead of the
projective bundles.

\section{Projective structure and second jet of tangent bundle}\label{se2}

\subsection{Projective structure}\label{se2.1}

The multiplicative group of nonzero complex numbers will be denoted by ${\mathbb C}^*$.
Let $\mathbb V$ be a complex vector space of dimension two. Let ${\mathbb P}(\mathbb V)$ denote 
the projective line that parametrizes all one-dimensional subspaces of $\mathbb V$. Consider 
the projective linear group $\text{PGL}(\mathbb V) \, :=\, \text{GL}(\mathbb V)/({\mathbb C}^*\cdot{\rm
Id}_{\mathbb V})\,=\, \text{SL}(\mathbb V)/(\pm {\rm 
Id}_{\mathbb V})$. The action of $\text{GL}(\mathbb V)$ on $\mathbb V$ produces an action of 
$\text{PGL}(\mathbb V)$ on ${\mathbb P}(\mathbb V)$. This way $\text{PGL}(\mathbb V)$ gets 
identified with the group of all holomorphic automorphisms of ${\mathbb P}(\mathbb V)$.

Let $\mathbb X$ be a connected Riemann surface. A holomorphic coordinate function on $\mathbb X$
is a pair $(U,\, \phi)$, where $U\, \subset\, \mathbb X$ is an open subset and
$$
\phi\, :\, U\, \longrightarrow\, {\mathbb P}(\mathbb V)
$$
is a holomorphic embedding. A holomorphic coordinate atlas on $\mathbb X$ is a family of
holomorphic coordinate functions $\{(U_i,\, \phi_i)\}_{i\in I}$ such that
$\bigcup_{i\in I} U_i \,=\, \mathbb X$. So
$\phi_j\circ \phi^{-1}_i \, :\, \phi_i(U_i\cap U_j) \, \longrightarrow\, \phi_j(U_i\cap U_j)$
is a biholomorphic map for every $i,\, j\,\in\, I$ with $U_i\cap U_j\, \not=\, \emptyset$.

A projective structure on $\mathbb X$ is given by a holomorphic coordinate atlas
$\{(U_i,\, \phi_i)\}_{i\in I}$ such that for all ordered pairs $i,\, j\, \in\, I$,
with $U_i\cap U_j\, \not=\, \emptyset$, and every connected component $U\, \subset\, U_i\cap U_j$, 
there is an element $G^U_{j,i}\, \in\, \text{PGL}(\mathbb V)$
satisfying the condition that the biholomorphic map
$$
\phi_j\circ \phi^{-1}_i \, :\, \phi_i(U) \, \longrightarrow\, \phi_j(U)
$$
is the restriction, to $\phi_i(U)$, of the automorphism of ${\mathbb P}(\mathbb V)$ 
given by of $G^U_{j,i}$. Note that $G^U_{j,i}$ is uniquely determined by $\phi_j\circ \phi^{-1}_i$. 

Two holomorphic coordinate atlases $\{(U_i,\, \phi_i)\}_{i\in I}$ and $\{(U'_i,\, \phi'_i)\}_{i\in 
J}$ satisfying the above condition are called \textit{equivalent} if their union $\{(U_i,\, \phi_i)\}_{i\in I}
\bigcup \{(U'_i,\, \phi'_i)\}_{i \in J}$ also satisfies the above condition. A \textit{projective structure} on 
$\mathbb X$ is an equivalence class of atlases satisfying the above condition; see \cite{Gu}, 
\cite{He}, \cite{GKM} for projective structures.

Let $\gamma\, :\,{\mathbf P}\, \longrightarrow\, \mathbb X$ be a holomorphic ${\mathbb C}{\mathbb P}^1$--bundle
over $\mathbb X$. In other words, $\gamma$ is a surjective holomorphic submersion such that each fiber of it
is holomorphically isomorphic to the complex projective line ${\mathbb C}{\mathbb P}^1$. Let
\begin{equation}\label{tga}
T_\gamma\, :=\, \text{kernel}(d\gamma)\, \subset\,T{\mathbf P}
\end{equation}
be the relative holomorphic tangent bundle, where $T{\mathbf P}$ is the holomorphic
tangent bundle of ${\mathbf P}$. A \textit{holomorphic connection} on ${\mathbf P}$ is a holomorphic line
subbundle ${\mathcal H}\, \subset\, T{\mathbf P}$ such that the natural homomorphism
\begin{equation}\label{dc}
T_\gamma\oplus {\mathcal H}\, \longrightarrow\, T{\mathbf P}
\end{equation}
is an isomorphism; see \cite{At}.

Let ${\mathcal H}\, \subset\, T{\mathbf P}$ be a holomorphic connection on ${\mathbf P}$.
Let $s\, :\, {\mathbb X}\, \longrightarrow\,{\mathbf P}$ be a holomorphic section of
$\gamma$, meaning $\gamma\circ s\,=\, \text{Id}_{\mathbb X}$. Consider the differential $ds$ of $s$
$$
T {\mathbb X}\, \longrightarrow\, s^*T{\mathbf P}\,=\, (s^*T_\gamma)\oplus (s^*{\mathcal H})\, ,
$$
where the decomposition is the pullback of the decomposition in \eqref{dc}. Let
\begin{equation}\label{dc2}
\widehat{ds}\, :\, T {\mathbb X}\, \longrightarrow\, s^*T_\gamma
\end{equation}
be the homomorphism obtained by composing it with the natural projection
$(s^*T_\gamma)\oplus (s^*{\mathcal H})\, \longrightarrow\, s^*T_\gamma$.

Giving a projective structure on $\mathbb X$ is equivalent to giving a triple $(\gamma,\, {\mathcal H},\, s)$,
where
\begin{itemize}
\item $\gamma\, :\, {\mathbf P}\, \longrightarrow\, \mathbb X$ is a holomorphic ${\mathbb C}{\mathbb P}^1$--bundle,

\item ${\mathcal H}\, \subset\, T{\mathbf P}$ is a holomorphic connection, and

\item $s\, :\, {\mathbb X}\, \longrightarrow\,{\mathbf P}$ is a holomorphic section of
$\gamma$,
\end{itemize}
such that the homomorphism $\widehat{ds}$ in \eqref{dc2} is an isomorphism. More details on this
can be found in \cite{Gu}.

\subsection{Jet bundles}\label{se2.2}

We briefly recall the definition of a jet bundle of a holomorphic vector bundle on $\mathbb X$.
Let
$$
p_j \,\colon\, {\mathbb X}\times{\mathbb X} \,\longrightarrow\,{\mathbb X}\, , \ \ j \,=\, 1,\, 2\,,
$$
be the natural projection to the $j$--th factor. Let
$$
\Delta \,:=\, \{(x,\, x) \,\in\, {\mathbb X}\times{\mathbb X} \,\mid\, x \,\in\,
{\mathbb X}\}\, \subset\, {\mathbb X}\times{\mathbb X}
$$
be the reduced diagonal divisor. For a holomorphic vector bundle $W$ on ${\mathbb X}$, and
any integer $k\, \geq\, 0$, define the $k$--th order jet bundle 
$$
J^k(W) \,:=\, p_{1*} \left((p^*_2W)/(p^*_2W\otimes
{\mathcal O}_{{\mathbb X}\times{\mathbb X}}(-(k+1)\Delta))\right)
\, \longrightarrow \, {\mathbb X}\, .
$$
The natural inclusion of ${\mathcal O}_{{\mathbb X}\times{\mathbb X}}(-(k+1)\Delta)$
in ${\mathcal O}_{{\mathbb X}\times{\mathbb X}}(-k\Delta)$ produces a surjective homomorphism
$J^{k}(W) \,\longrightarrow\, J^{k-1}(W)$. This way we obtain a short exact sequence of
holomorphic vector bundles on ${\mathbb X}$
\begin{equation}\label{e1}
0\, \longrightarrow\, K^{\otimes k}_{\mathbb X}\otimes W \,\longrightarrow\, 
J^{k}(W) \,\longrightarrow\, J^{k-1}(W) \,\longrightarrow\, 0\, ,
\end{equation}
where $K_{\mathbb X}$ is the holomorphic cotangent bundle of ${\mathbb X}$.

For holomorphic vector bundles $W$ and $W'$ on ${\mathbb X}$,
any ${\mathcal O}_{\mathbb X}$--linear homomorphism $W\, \longrightarrow\, W'$ induces a homomorphism
\begin{equation}\label{e2b}
J^i(W)\, \longrightarrow\, J^i(W')
\end{equation}
for every $i\, \geq\, 0$.

For holomorphic vector bundles $W$ and $W'$ on ${\mathbb X}$, any any integer $m\, \geq\, 0$, define the
sheaf of differential operators of order $m$ from $W$ to $W'$
\begin{equation}\label{di1}
\text{Diff}^m_{\mathbb X}(W,\, W') \,:=\, \text{Hom}(J^m(W),\, W') \,\longrightarrow\,{\mathbb X}\, .
\end{equation}
Using the exact sequence in \eqref{e1} we have the short exact sequence
\begin{equation}\label{e2}
0\, \longrightarrow\, \text{Diff}^{m-1}_{\mathbb X}(W,\, W') \, \longrightarrow\,
\text{Diff}^m_{\mathbb X}(W,\, W') \, \stackrel{\sigma}{\longrightarrow}\, (T{\mathbb X})^{\otimes m}
\otimes \text{Hom}(W,\, W') \, \longrightarrow\, 0\, ,
\end{equation}
where $T{\mathbb X}$ is the holomorphic tangent bundle of ${\mathbb X}$. The homomorphism $\sigma$
in \eqref{e2} is called the symbol map.

\begin{remark}\label{rem-j}
Consider the short exact sequences
$$
0\, \longrightarrow\, K_{\mathbb X}\otimes T{\mathbb X}\,=\, {\mathcal O}_{\mathbb X} \,
\longrightarrow\, J^{1}(T{\mathbb X}) \,\longrightarrow\, T{\mathbb X} \,\longrightarrow\, 0
$$
and
$$
0\, \longrightarrow\, K^{\otimes 2}_{\mathbb X}\otimes T{\mathbb X}\,=\, K_{\mathbb X}
\,\longrightarrow\, J^{2}(T{\mathbb X}) \,\longrightarrow\,J^{1}(T{\mathbb X}) \,\longrightarrow\, 0
$$
as in \eqref{e1}. These two together imply that $\bigwedge^3 J^{2}(T{\mathbb X})\,=\, 
K_{\mathbb X}\otimes\bigwedge^2 J^{1}(T{\mathbb X})\,=\, {\mathcal 
O}_{\mathbb X}$. It is straight-forward to check that any for biholomorphism $\beta \, :\, {\mathbb X}\, 
\longrightarrow\, {\mathbb Y}$, the homomorphism $J^2(T{\mathbb X})\, \longrightarrow\,
\beta^* J^2(T{\mathbb Y})$ corresponding to $\beta$
takes the section of $\bigwedge^3 J^{2}(T{\mathbb X})\,=\, {\mathcal
O}_{\mathbb X}$ given by 
the constant function $1$ on ${\mathbb X}$ to the section of $\bigwedge^3 J^{2}(T{\mathbb Y})
\,=\, {\mathcal O}_{\mathbb Y}$ given by the constant function $1$ on ${\mathbb Y}$.
\end{remark}

\subsection{A third order differential operator}\label{se2.3}

We continue with the set-up of Section \ref{se2.1}. Let
$$
{\mathcal T}\, :=\, {\mathbb P}(\mathbb V)\times H^0({\mathbb P}(\mathbb V),\, T{\mathbb P}(\mathbb V))
\, \longrightarrow\,{\mathbb P}(\mathbb V)
$$
be the trivial holomorphic vector bundle of rank three over ${\mathbb P}(\mathbb V)$ with fiber
$H^0({\mathbb P}(\mathbb V),\, T{\mathbb P}(\mathbb V))$. For any integer $j\, \geq\, 1$, let
\begin{equation}\label{e0}
\psi_j\, :\, {\mathcal T}\, \longrightarrow\, J^j(T{\mathbb P}(\mathbb V))
\end{equation}
be the holomorphic ${\mathcal O}_{{\mathbb P}(\mathbb V)}$--linear
map that sends any $(x,\, s)\, \in\, {\mathbb P}(\mathbb V)\times H^0({\mathbb P}(\mathbb V),
\, T{\mathbb P}(\mathbb V))$ to the restriction of the section $s$ to the $j$--th order infinitesimal
neighborhood of the point $x\, \in\, {\mathbb P}(\mathbb V)$.

\begin{lemma}\label{lem1}
The homomorphism $\psi_2$ in \eqref{e0} is an isomorphism.
\end{lemma}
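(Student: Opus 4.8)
The plan is to verify that $\psi_2$ is an isomorphism of vector bundles by checking that it is injective on each fiber; since $\mathcal{T}$ and $J^2(T\mathbb{P}(\mathbb{V}))$ have the same rank, fiberwise injectivity will already force $\psi_2$ to be an isomorphism. First I would record the two ranks. The bundle $\mathcal{T}$ is trivial of rank $\dim_{\mathbb{C}} H^0(\mathbb{P}(\mathbb{V}),\, T\mathbb{P}(\mathbb{V}))$; as $T\mathbb{P}(\mathbb{V})\,\cong\, \mathcal{O}_{\mathbb{P}(\mathbb{V})}(2)$, this space of global vector fields is three-dimensional (it is the Lie algebra of $\text{PGL}(\mathbb{V})$). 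On the other side, the two short exact sequences of the type \eqref{e1} used in Remark \ref{rem-j} exhibit $J^2(T\mathbb{P}(\mathbb{V}))$ as an iterated extension with one-dimensional successive quotients, so it too has rank three.

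For fiberwise injectivity, fix a point $x\,\in\, \mathbb{P}(\mathbb{V})$ and suppose that $s\,\in\, H^0(\mathbb{P}(\mathbb{V}),\, T\mathbb{P}(\mathbb{V}))$ lies in the kernel of $\psi_2$ over $x$, meaning that the restriction of $s$ to the second-order infinitesimal neighborhood of $x$ vanishes. In a holomorphic coordinate $z$ centered at $x$, writing $s\,=\, f(z)\,\partial_z$, this says $f(x)\,=\,f'(x)\,=\,f''(x)\,=\,0$, so $s$ vanishes to order at least three at $x$. But a nonzero global section of $T\mathbb{P}(\mathbb{V})\,\cong\,\mathcal{O}(2)$ has a divisor of zeros of degree two, hence vanishes to order at most two at any point. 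Therefore $s\,=\,0$, and $\psi_2$ is injective on each fiber.

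Finally, since $\psi_2$ is an $\mathcal{O}_{\mathbb{P}(\mathbb{V})}$-linear map between vector bundles of equal rank that is injective on every fiber, it is bijective on every fiber, so its determinant is a nowhere-vanishing section of $\text{Hom}(\bigwedge^3\mathcal{T},\, \bigwedge^3 J^2(T\mathbb{P}(\mathbb{V})))$; hence $\psi_2$ admits a holomorphic inverse and is an isomorphism. A slicker variant avoids even the coordinate computation: by Remark \ref{rem-j} one has $\bigwedge^3 J^2(T\mathbb{P}(\mathbb{V}))\,=\,\mathcal{O}_{\mathbb{P}(\mathbb{V})}$, while $\bigwedge^3\mathcal{T}\,=\,\mathcal{O}_{\mathbb{P}(\mathbb{V})}$ as $\mathcal{T}$ is trivial, so $\det\psi_2$ is a global function on $\mathbb{P}(\mathbb{V})$, necessarily constant; it then suffices to evaluate it at one point, for instance using the basis $\partial_z,\, z\,\partial_z,\, z^2\,\partial_z$ of global vector fields, whose $2$-jets at $z\,=\,0$ are visibly linearly independent.

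I do not expect a genuine obstacle here. The only points demanding care are the bookkeeping that equates ``the $2$-jet of $s$ at $x$ vanishes'' with ``$s$ vanishes to order three at $x$'', and the standard but worth-stating fact that a fiberwise injective morphism between bundles of the same rank is an isomorphism, not merely an injection of sheaves.
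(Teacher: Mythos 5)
Your proof is correct and follows essentially the same route as the paper: both establish fiberwise injectivity by noting that a kernel element would be a global section of $T{\mathbb P}(\mathbb V)$ vanishing to order three at a point, which is impossible since $T{\mathbb P}(\mathbb V)$ has degree two (the paper phrases this as $H^0({\mathbb P}(\mathbb V),\,{\mathcal O}_{{\mathbb P}(\mathbb V)}(-3x)\otimes T{\mathbb P}(\mathbb V))\,=\,0$ on degree grounds), and then conclude by equality of ranks. The determinant variant you sketch is a harmless extra, but the core argument coincides with the paper's.
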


\begin{proof}
If $(x,\, s)\, \in\, \text{kernel}(\psi_2(x))$, then
$$
s \, \in\, H^0({\mathbb P}(\mathbb V),\, {\mathcal O}_{{\mathbb P}(\mathbb V)}(-3x)\otimes
T{\mathbb P}(\mathbb V))\, .
$$
But $H^0({\mathbb P}(\mathbb V),\, {\mathcal O}_{{\mathbb P}(\mathbb V)}(-3x)\otimes
T{\mathbb P}(\mathbb V))\,=\, 0$, because $\text{degree}({\mathcal O}_{{\mathbb P}(\mathbb V)}(-3x)\otimes
T{\mathbb P}(\mathbb V))\, <\, 0$. So the homomorphism $\psi_2$ is fiberwise injective. This implies that
$\psi_2$ is an isomorphism, because we have $\text{rank}({\mathcal T})\,=\, \text{rank}(J^2(T{\mathbb P}
(\mathbb V))$.
\end{proof}

\begin{lemma}\label{lem2}
There is a canonical holomorphic differential operator $\delta_0$ of order three from
$T{\mathbb P}(\mathbb V))$ to $K^{\otimes 2}_{{\mathbb P}(\mathbb V)}$. The symbol of $\delta_0$
is the section of
$$
(T{\mathbb P}(\mathbb V))^{\otimes 3}\otimes{\rm Hom}\Big(T{\mathbb P}(\mathbb V),\,
K^{\otimes 2}_{{\mathbb P}(\mathbb V)}\Big)\,=\, {\mathcal O}_{{\mathbb P}(\mathbb V)}
$$
given by the constant function $1$ on ${\mathbb P}(\mathbb V)$.
\end{lemma}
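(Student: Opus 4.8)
The plan is to exhibit $\delta_0$ as an element of $\text{Diff}^3_{{\mathbb P}(\mathbb V)}(T{\mathbb P}(\mathbb V),\, K^{\otimes 2}_{{\mathbb P}(\mathbb V)})\,=\, \text{Hom}(J^3(T{\mathbb P}(\mathbb V)),\, K^{\otimes 2}_{{\mathbb P}(\mathbb V)})$ (see \eqref{di1}), so the task is to build a canonical ${\mathcal O}_{{\mathbb P}(\mathbb V)}$--linear homomorphism $J^3(T{\mathbb P}(\mathbb V))\,\longrightarrow\, K^{\otimes 2}_{{\mathbb P}(\mathbb V)}$ and then compute its symbol via \eqref{e2}. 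The first observation is that, taking $k\,=\,3$ and $W\,=\, T{\mathbb P}(\mathbb V)$ in \eqref{e1}, the kernel subbundle is $K^{\otimes 3}_{{\mathbb P}(\mathbb V)}\otimes T{\mathbb P}(\mathbb V)\,=\, K^{\otimes 2}_{{\mathbb P}(\mathbb V)}$, which is exactly the intended target; denote by $q\, :\, J^3(T{\mathbb P}(\mathbb V))\, \longrightarrow\, J^2(T{\mathbb P}(\mathbb V))$ the forgetful projection in \eqref{e1} and by $\iota\, :\, K^{\otimes 2}_{{\mathbb P}(\mathbb V)}\, \hookrightarrow\, J^3(T{\mathbb P}(\mathbb V))$ its kernel inclusion.

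The key point is to produce a canonical splitting of $q$. For this I would use the order-three map $\psi_3$ from \eqref{e0} together with Lemma \ref{lem1}. Because the restriction of a global section of $T{\mathbb P}(\mathbb V)$ to the second order infinitesimal neighborhood of a point factors through its restriction to the third order neighborhood, the maps $\psi_2$ and $\psi_3$ are compatible with the forgetful projection, i.e. $q\circ \psi_3\,=\, \psi_2$. Since $\psi_2$ is an isomorphism by Lemma \ref{lem1}, the composite $\tau\,:=\, \psi_3\circ \psi_2^{-1}$ is a holomorphic homomorphism $J^2(T{\mathbb P}(\mathbb V))\, \longrightarrow\, J^3(T{\mathbb P}(\mathbb V))$ with $q\circ \tau\,=\, \text{Id}$; in other words $\tau$ splits the exact sequence \eqref{e1}. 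I then define $\delta_0$ to be the retraction associated to this splitting: the endomorphism $\text{Id}_{J^3(T{\mathbb P}(\mathbb V))}-\tau\circ q$ kills the image of $\tau$ and takes values in $\text{kernel}(q)\,=\, \iota(K^{\otimes 2}_{{\mathbb P}(\mathbb V)})$, so setting $\delta_0\,:=\, \iota^{-1}\circ(\text{Id}-\tau\circ q)$ gives the sought homomorphism $J^3(T{\mathbb P}(\mathbb V))\,\longrightarrow\, K^{\otimes 2}_{{\mathbb P}(\mathbb V)}$. As every ingredient ($\psi_2$, $\psi_3$, and the jet projections) is canonical, so is $\delta_0$, and in particular it is invariant under the $\text{PGL}(\mathbb V)$--action.

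Finally, the symbol. By \eqref{e2} the symbol $\sigma(\delta_0)$ is the restriction of $\delta_0$ to the subbundle $K^{\otimes 3}_{{\mathbb P}(\mathbb V)}\otimes T{\mathbb P}(\mathbb V)\,=\, K^{\otimes 2}_{{\mathbb P}(\mathbb V)}$, that is, $\delta_0\circ \iota\,=\, \iota^{-1}\circ(\text{Id}-\tau\circ q)\circ \iota$. Since $q\circ \iota\,=\, 0$, this restriction equals the identity endomorphism of $K^{\otimes 2}_{{\mathbb P}(\mathbb V)}$. Under the identifications $\text{Hom}(K^{\otimes 3}_{{\mathbb P}(\mathbb V)}\otimes T{\mathbb P}(\mathbb V),\, K^{\otimes 2}_{{\mathbb P}(\mathbb V)})\,=\, (T{\mathbb P}(\mathbb V))^{\otimes 3}\otimes \text{Hom}(T{\mathbb P}(\mathbb V),\, K^{\otimes 2}_{{\mathbb P}(\mathbb V)})\,=\, (T{\mathbb P}(\mathbb V)\otimes K_{{\mathbb P}(\mathbb V)})^{\otimes 3}\,=\, {\mathcal O}_{{\mathbb P}(\mathbb V)}$, this identity endomorphism corresponds precisely to the constant function $1$, as claimed; in particular $\delta_0$ is genuinely of order three. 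The only steps requiring real care are the compatibility identity $q\circ\psi_3\,=\,\psi_2$ (without which $\tau$ would fail to split $q$) and the bookkeeping of the line-bundle identifications in the symbol computation, needed to confirm that the normalization yields exactly $1$ rather than a nonzero scalar multiple; I would verify the former directly from the jet-restriction definition of the $\psi_j$.
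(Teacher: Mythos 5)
Your proposal is correct and follows essentially the same route as the paper: both use the splitting $\psi_3\circ\psi_2^{-1}$ of the jet exact sequence \eqref{e1} (via Lemma \ref{lem1}) and define $\delta_0$ as the associated retraction onto $K^{\otimes 2}_{{\mathbb P}(\mathbb V)}$, with the symbol computed from $\delta_0\circ\iota\,=\,\mathrm{Id}$. Your explicit formula $\delta_0\,=\,\iota^{-1}\circ(\mathrm{Id}-\tau\circ q)$ and the verification of $q\circ\psi_3\,=\,\psi_2$ merely spell out details the paper leaves implicit.
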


\begin{proof}
Consider the short exact sequence
\begin{equation}\label{e3}
0\, \longrightarrow\, K^{\otimes 3}_{{\mathbb P}(\mathbb V)}\otimes T{\mathbb P}(\mathbb V)
\,=\, K^{\otimes 2}_{{\mathbb P}(\mathbb V)} \,\stackrel{\iota_0}{\longrightarrow}\,
J^3(T{\mathbb P}(\mathbb V)) \,\longrightarrow\, J^2(T{\mathbb P}(\mathbb V)) \,\longrightarrow\, 0
\end{equation}
in \eqref{e1}. Using Lemma \ref{lem1}, define the homomorphism
$$
\psi_3\circ (\psi_2)^{-1}\, :\, J^2(T{\mathbb P}(\mathbb V))\,\longrightarrow\,J^3(T{\mathbb P}(\mathbb V))\, ,
$$
where the homomorphisms
$\psi_j$ are constructed in \eqref{e0}. This homomorphism $\psi_3\circ (\psi_2)^{-1}$
is a holomorphic splitting of the exact sequence in \eqref{e3}. 
In other words, there is a unique surjective homomorphism
$$
\delta_0\, :\, J^3(T{\mathbb P}(\mathbb V))\,\longrightarrow\, K^{\otimes 2}_{{\mathbb P}(\mathbb V)}
$$
such that $\text{kernel}(\delta_0)\,=\, \text{image}(\psi_3\circ (\psi_2)^{-1})$ and
\begin{equation}\label{s}
\delta_0\circ\iota_0\,=\, \text{Id}_{K^{\otimes 2}_{{\mathbb P}(\mathbb V)}}\, ,
\end{equation}
where $\iota_0$ is the homomorphism in \eqref{e3}.

{}From the definition in \eqref{di1} it follows
that $$\delta_0\,\in\, H^0({\mathbb P}(\mathbb V),\, \text{Diff}^3_{{\mathbb P}(\mathbb V)}
(T{\mathbb P}(\mathbb V),\, K^{\otimes 2}_{{\mathbb P}(\mathbb V)}))\, .$$ Also, from \eqref{s} it
follows immediately that $\sigma(\delta_0)\,=\, 1$, where $\sigma$ is the symbol homomorphism
in \eqref{e2}.
\end{proof}

The trivialization of $J^2(T{\mathbb P}(\mathbb V))$ given by Lemma \ref{lem1} produces a
holomorphic connection on $J^2(T{\mathbb P}(\mathbb V))$; let
\begin{equation}\label{d}
\mathbb{D}_0\, :\, J^2(T{\mathbb P}(\mathbb V))\, \longrightarrow\, J^2(T{\mathbb P}(\mathbb V))\otimes
K_{{\mathbb P}(\mathbb V)}
\end{equation}
be this holomorphic connection on $J^2(T{\mathbb P}(\mathbb V))$. Note that any holomorphic
connection on a Riemann surface is automatically flat (see \cite{At} for holomorphic connections).

\begin{remark}\label{rem-1}
Let $U\, \subset\, {\mathbb P}(\mathbb V)$ be an open subset and
$$
s\, \in\, H^0(U,\, J^2(T{\mathbb P}(\mathbb V))\vert_U)\,=\, H^0(U,\, J^2(TU))
$$ a flat section for the connection
$\mathbb{D}_0$ in \eqref{d}. Since $\psi_2$ is an isomorphism (see Lemma \ref{lem1}), it follows that
the section $s'\, \in\, H^0(U,\, TU)$ given by $s$ using the natural projection
$J^2(T{\mathbb P}(\mathbb V))\, \longrightarrow\, T{\mathbb P}(\mathbb V)$ (see \eqref{e1}) has
the property that the section of $J^2(T{\mathbb P}(\mathbb V))\vert_U$ corresponding to $s'$ coincides
with $s$. If $U$ is connected, then $s'$ extends to a holomorphic
section of $T{\mathbb P}(\mathbb V)$ over ${\mathbb P}(\mathbb V)$.
\end{remark}

Let $\mathfrak{sl}(\mathbb V)$ be the Lie algebra of $\text{PGL}(\mathbb V)$; it consists of
endomorphisms of $\mathbb V$ of trace zero. Using the action of $\text{PGL}(\mathbb V)$ on
${\mathbb P}(\mathbb V)$ we get a homomorphism
\begin{equation}\label{a0}
\alpha_0\, :\, \mathfrak{sl}(\mathbb V)\, \longrightarrow\, H^0({\mathbb P}(\mathbb V),\, T{\mathbb P}(\mathbb V))\, .
\end{equation}
This $\alpha_0$ is an isomorphism, because it is injective and $\dim \mathfrak{sl}(\mathbb V)\,
=\, \dim H^0({\mathbb P}(\mathbb V),\, T{\mathbb P}(\mathbb V))$. Note that
$H^0({\mathbb P}(\mathbb V),\, T{\mathbb P}(\mathbb V))$ has the structure of a Lie algebra given by the
Lie bracket operation of vector fields.
The homomorphism $\alpha_0$ in \eqref{a0} is in fact an isomorphism of Lie algebras.
Therefore, from Lemma \ref{lem1} it follows that the fibers of 
$J^2(T{\mathbb P}(\mathbb V))$ are identified with the Lie algebra $\mathfrak{sl}(\mathbb V)$.
In particular, the fibers of $J^2(T{\mathbb P}(\mathbb V))$ are Lie algebras.
For any $x\, \in\, {\mathbb P}(\mathbb V))$, and $v,\, w\, \in\, J^2(T{\mathbb P}(\mathbb V))_x$, let
\begin{equation}\label{d1}
[v,\, w]'\, \in\, J^2(T{\mathbb P}(\mathbb V))_x
\end{equation}
be the Lie bracket operation on the fiber $J^2(T{\mathbb P}(\mathbb V))_x$.

\begin{remark}\label{rem0}
Let $s$ and $t$ be two holomorphic vector fields on an open subset $U\, \subset\, {\mathbb 
P}(\mathbb V)$. The holomorphic section of $J^2(T{\mathbb P}(\mathbb V))\vert_U$ defined by $s$ 
and $t$ will be denoted by $\widehat{s}$ and $\widehat{t}$ respectively. It should be clarified 
that the holomorphic section $\widehat{[s,t]}$ of $J^2(T{\mathbb P}(\mathbb V))\vert_U$ given by 
the Lie bracket $[s,\, t]$ of vector fields does not in general coincide with the section 
$[\widehat{s},\, \widehat{t}]'$ defined by \eqref{d1}. The reason for it is
that the operation in \eqref{d1} is constructed 
using the finite dimensional space consisting
of global holomorphic sections of $T{\mathbb P}(\mathbb V)$, 
while the operation of Lie bracket of vector fields is constructed locally. However if $s$ and $t$ are
such that $\widehat{s}$ and $\widehat{t}$ are flat sections for the holomorphic connection $\mathbb{D}_0$ 
on $J^2(T{\mathbb P}(\mathbb V))$ in \eqref{d}, then the holomorphic section $\widehat{[s,\,t]}$ of 
$J^2(T{\mathbb P}(\mathbb V))\vert_U$ given by the Lie bracket of vector fields $[s,\, t]$ does 
coincide with the section $[\widehat{s},\, \widehat{t}]'$ defined by \eqref{d1}. Indeed, this 
follows from the fact that these $s$ and $t$ are restrictions of global vector fields
on ${\mathbb P}(\mathbb V)$, if $U$ is connected; see Remark \ref{rem-1}.
\end{remark}

\begin{remark}\label{rem1}
The holomorphic connection $\mathbb{D}_0$ in \eqref{d} on $J^2(T{\mathbb P}(\mathbb V))$ preserves the Lie
algebra structure on the fibers of $J^2(T{\mathbb P}(\mathbb V))$ given in \eqref{d1}. This means that
$$
i_u \mathbb{D}_0([s,\, t]')\,=\, [i_u\mathbb{D}_0(s),\, t]'+
[s,\, i_u\mathbb{D}_0(t)]'
$$
for locally defined holomorphic sections $s$, $t$ and $u$ of $J^2(T{\mathbb P}(\mathbb V))$,
where $i_u$ is the contraction of $1$--forms by $u$. In particular,
the local system on ${\mathbb P}(\mathbb V)$ given by the flat sections for the connection
$\mathbb{D}_0$ is closed under the Lie bracket operation in \eqref{d1}.
Note that using Remark \ref{rem-1} we may construct a Lie algebra structure on the fibers
of $J^2(T{\mathbb P}(\mathbb V))$. Indeed, for $v,\, w\, \in\, J^2(T{\mathbb P}(\mathbb V))_x$, let
$\widetilde{v},\, \widetilde{w}$ be the flat sections of $J^2(T{\mathbb P}(\mathbb V))$, for the
connection $\mathbb{D}_0$, defined around $x$ such that $\widetilde{v}(x)\,=\, v$ and
$\widetilde{w}(x)\,=\, w$. Let $\widetilde{v}'$ (respectively, $\widetilde{w}'$) be the holomorphic
sections of $T{\mathbb P}(\mathbb V)$ defined around $x$
given by $\widetilde{v}$ (respectively, $\widetilde{w}$) using the natural projection
$J^2(T{\mathbb P}(\mathbb V))\, \longrightarrow\, T{\mathbb P}(\mathbb V)$ (see \eqref{e1}
and Remark \ref{rem-1}). Now define $[v,\, w]$ to be the element of
$J^2(T{\mathbb P}(\mathbb V))_x$ given by the locally defined
section $[\widetilde{v}',\,\widetilde{w}']$ of $T{\mathbb P}(\mathbb V)$. From Remark \ref{rem0}
it follows that this Lie algebra structure on the fibers of $J^2(T{\mathbb P}(\mathbb V))$ coincides with
the one in \eqref{d1}.
\end{remark}

\begin{remark}\label{rem2}
Let ${\mathbb L}_0$ be the complex local system on ${\mathbb P}(\mathbb V)$ given by the sheaf of 
solutions of the differential operator $\delta_0$ in Lemma \ref{lem2}. From the construction of 
$\delta_0$ it is straight-forward to deduce that ${\mathbb L}_0$ is identified with the local 
system given by the sheaf of flat sections of $J^2(T{\mathbb P}(\mathbb V))$ for the connection 
$\mathbb{D}_0$ in \eqref{d}. Therefore, from Remark \ref{rem1} we conclude that the stalks of the 
complex local system ${\mathbb L}_0$ are closed under the Lie bracket operation of vector fields. 
Moreover, the stalks of the local system ${\mathbb L}_0$ are identified with the Lie algebra
$\mathfrak{sl}(\mathbb V)$.
\end{remark}

\begin{proposition}\label{prop1}\mbox{}
\begin{enumerate}
\item Let $\mathbb X$ be a connected Riemann surface equipped with a projective structure $\mathcal 
P$. Then $\mathcal P$ produces a holomorphic connection, which will be called ${\mathbb D} 
({\mathcal P})$, on $J^2(T{\mathbb X})$. For any open subset $U\, \subset\, \mathbb X$, and any
section $s\, \in\, H^0(U,\, J^2(T{\mathbb X})\vert_U)\,=\, H^0(U,\, J^2(TU))$ flat for the
connection ${\mathbb D}({\mathcal P})$, there is a unique holomorphic section of $TU$ that produces
$s$. The space of section of $TU$ given by the flat sections of $J^2(U)$ is closed under the usual Lie bracket
operation of vector fields. The stalks for the local system on $\mathbb X$ given by 
the sheaf of flat sections for ${\mathbb D}({\mathcal P})$ are closed under the usual Lie bracket 
operation of vector fields, and moreover the stalks are isomorphic to the Lie algebra $\mathfrak{sl}(\mathbb V)$.

\item The projective structure $\mathcal P$ also produces a canonical holomorphic differential operator
$\delta({\mathcal P})\, \in\, H^0({\mathbb X},\, {\rm Diff}^3_{\mathbb X}(T{\mathbb X},\,
K^{\otimes 2}_{\mathbb X}))$ whose symbol is the constant function $1$ on ${\mathbb X}$.

\item The local system on $\mathbb X$ gives by the sheaf of flat sections for ${\mathbb 
D}({\mathcal P})$ is identified with the local system given by the sheaf of solutions of 
$\delta({\mathcal P})$. This sheaf of solutions of $\delta({\mathcal P})$ is closed under the Lie 
bracket operation of vector fields.

\item The connection on $\bigwedge^3 J^2(T{\mathbb X})\,=\, {\mathcal O}_{\mathbb X}$ (see Remark \ref{rem-j})
induced by ${\mathbb D}({\mathcal P})$ coincides with the trivial connection on ${\mathcal O}_{\mathbb X}$
given by the de Rham differential $d$.
\end{enumerate}
\end{proposition}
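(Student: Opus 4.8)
The plan is to build all four objects on $\mathbb X$ by transporting, through the projective charts $\{(U_i,\,\phi_i)\}_{i\in I}$, the corresponding canonical objects already constructed on $\mathbb P(\mathbb V)$ in Section \ref{se2.3}, and then to check that the resulting local data glue. The single fact that makes the gluing work—and the one I would establish first—is that every object in sight is invariant under $\mathrm{PGL}(\mathbb V)$. Concretely, the maps $\psi_2$ and $\psi_3$ of \eqref{e0} are defined by restricting global vector fields to infinitesimal neighbourhoods, so they are equivariant for the natural actions of $\mathrm{PGL}(\mathbb V)$ on $\mathbb P(\mathbb V)$, on $T\mathbb P(\mathbb V)$, on its space of global sections, and on the jet bundles; consequently the connection $\mathbb D_0$ of \eqref{d} and the operator $\delta_0$ of Lemma \ref{lem2}, both manufactured solely from $\psi_2$ and $\psi_3$, satisfy $G^*\mathbb D_0=\mathbb D_0$ and $G^*\delta_0=\delta_0$ for every $G\in\mathrm{PGL}(\mathbb V)$. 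I expect verifying this equivariance to be the one genuinely delicate point; the rest is naturality and gluing.

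Granting the invariance, I would prove parts (1) and (2) together. Each $\phi_i$ is a biholomorphism onto an open subset of $\mathbb P(\mathbb V)$, so its differential identifies $J^2(TU_i)$ with $\phi_i^*J^2(T\mathbb P(\mathbb V))$ by functoriality of jets under biholomorphisms (as in Remark \ref{rem-j}); pulling $\mathbb D_0$ and $\delta_0$ back along $\phi_i$ produces a holomorphic connection $\mathbb D_i$ on $J^2(TU_i)$ and an operator $\delta_i\in H^0(U_i,\,\mathrm{Diff}^3_{U_i}(TU_i,\,K^{\otimes 2}_{U_i}))$. On a connected component $U$ of $U_i\cap U_j$ we have $\phi_j=G^U_{j,i}\circ\phi_i$, so $\mathbb D_j=\phi_i^*(G^U_{j,i})^*\mathbb D_0=\phi_i^*\mathbb D_0=\mathbb D_i$ by invariance, and likewise $\delta_j=\delta_i$; hence the local data glue to a global connection $\mathbb D(\mathcal P)$ on $J^2(T\mathbb X)$ and a global operator $\delta(\mathcal P)$. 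Since the symbol is carried to the symbol under biholomorphisms and $\sigma(\delta_0)=1$, each $\sigma(\delta_i)$ equals the constant $1$, whence $\sigma(\delta(\mathcal P))=1$, giving (2). For (1), the statements about flat sections are local and invariant under biholomorphisms, so they transfer verbatim from $\mathbb P(\mathbb V)$: the projection $J^2(TU)\to TU$ sends a flat section $s$ to its unique underlying vector field $s'$, and Remark \ref{rem-1} (applied on each chart) gives that the $2$-jet of $s'$ recovers $s$; Remarks \ref{rem0} and \ref{rem1} give that such vector fields form a space closed under the Lie bracket and that the stalks of the local system of flat sections are isomorphic, as Lie algebras, to $\mathfrak{sl}(\mathbb V)$.

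Part (3) is then a direct transport of Remark \ref{rem2}: over each $U_i$ the sheaf of solutions of $\delta(\mathcal P)$ and the sheaf of flat sections of $\mathbb D(\mathcal P)$ are both the image under $\phi_i$ of the local system $\mathbb L_0$, so they coincide, and the coincidence is compatible on overlaps by the same invariance; the Lie-bracket closure is the one already recorded in (1).

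For part (4) I would first compute in the model. The connection induced by $\mathbb D_0$ on $\bigwedge^3J^2(T\mathbb P(\mathbb V))$, read through the canonical identification $\bigwedge^3J^2(T\mathbb P(\mathbb V))=\mathcal O_{\mathbb P(\mathbb V)}$ of Remark \ref{rem-j}, is a holomorphic connection on $\mathcal O_{\mathbb P(\mathbb V)}$, hence of the form $d+\omega$ with $\omega\in H^0(\mathbb P(\mathbb V),\,K_{\mathbb P(\mathbb V)})$; as the latter space vanishes, this induced connection is exactly $d$. Transporting along the charts, and using the last assertion of Remark \ref{rem-j} that the canonical trivializing section $1$ of $\bigwedge^3J^2$ is preserved by the biholomorphisms, shows that the connection induced by $\mathbb D(\mathcal P)$ on $\bigwedge^3J^2(T\mathbb X)=\mathcal O_{\mathbb X}$ agrees with $d$ on every chart, hence is the de Rham differential $d$ globally.
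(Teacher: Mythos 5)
Your proposal is correct and takes essentially the same route as the paper's own proof: establish the $\text{PGL}(\mathbb V)$--equivariance of $\psi_2$ and $\psi_3$ (hence of $\mathbb{D}_0$ and $\delta_0$), glue the pulled-back connection and operator along the projective charts, and transport Remarks \ref{rem-1}, \ref{rem0}, \ref{rem1} and \ref{rem2} to $\mathbb X$ for the statements about flat sections, solutions and Lie brackets. The only (harmless) deviation is in part (4), where you deduce that the induced connection on $\bigwedge^3 J^{2}(T{\mathbb P}(\mathbb V))\,=\,{\mathcal O}_{{\mathbb P}(\mathbb V)}$ equals $d$ from the vanishing of $H^0({\mathbb P}(\mathbb V),\, K_{{\mathbb P}(\mathbb V)})$, whereas the paper invokes the simple connectedness of ${\mathbb P}(\mathbb V)$; both one-line justifications are valid.
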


\begin{proof}
The action of $\text{PGL}(\mathbb V)$ on ${\mathbb P}(\mathbb V)$ produces actions of 
$\text{PGL}(\mathbb V)$ on $J^j(T{\mathbb P}(\mathbb V))$, $j\, \geq\, 0$, and $H^0({\mathbb P}(\mathbb V),\, 
T{\mathbb P}(\mathbb V))$. The homomorphism $\psi_j$ in \eqref{e0} is clearly equivariant for the 
actions of $\text{PGL}(\mathbb V)$, with $\text{PGL}(\mathbb V)$ acting diagonally on ${\mathbb 
P}(\mathbb V)\times H^0({\mathbb P}(\mathbb V),\, T{\mathbb P}(\mathbb V))$. Therefore, from 
Lemma \ref{lem1} we get a holomorphic connection on $J^2(T{\mathbb X})$. To explain this with
more details, take a 
holomorphic coordinate atlas $\{(U_i,\, \phi_i)\}_{i\in I}$ in the equivalence class defining 
$\mathcal P$. Using $\phi_i$, the holomorphic connection ${\mathbb D}_0\vert_{\phi_i(U_i)}$ in 
\eqref{d} on $J^2(T{\mathbb P}(\mathbb V))\vert_{\phi_i(U_i)}$ produces a holomorphic connection 
on $J^2(T{\mathbb X})\vert_{U_i}$. Using the above $\text{PGL}(\mathbb V)$--equivariance 
property, these locally defined connections on $J^2(T{\mathbb X})\vert_{U_i}$, $~i\, \in\, I$, 
patch together compatibly on the intersections of the open
subsets to produce a holomorphic connection ${\mathbb D}({\mathcal P})$ on 
$J^2(T{\mathbb X})$.

Take any flat section $s\, \in\, H^0(U,\, J^2(T{\mathbb X})\vert_U)\,=\, H^0(U,\, J^2(TU))$ as in the
first statement of the proposition. From Remark \ref{rem-1} we conclude that the section
of $TU$ given by $s$, using the natural projection $J^2(TU)\, \longrightarrow\, TU$,
actually produces $s$. From Remark \ref{rem0} it follows that
the space of section of $TU$ given by the flat sections of $J^2(TU)$ is closed under the usual Lie bracket
operation of vector fields. Consequently,
the stalks for the local system on $\mathbb X$ given by the sheaf of flat sections for ${\mathbb 
D}({\mathcal P})$ are closed under the Lie bracket operation, and they are isomorphic to the Lie 
algebra $\mathfrak{sl}(\mathbb V)$, because the connection ${\mathbb D}_0$ has these properties; see 
Remark \ref{rem1}.

Similarly, the second statement of the proposition follows from the fact that
the differential operator $\delta_0$ in Lemma \ref{lem2} is $\text{PGL}(\mathbb V)$--equivariant.
Indeed, given a holomorphic coordinate atlas $\{(U_i,\, \phi_i)\}_{i\in I}$ as above, we have
a differential operator on each $U_i$ given by $\delta_0$ using the coordinate function $\phi_i$.
These differential operator patch together compatibly to produce a differential operator
$$\delta({\mathcal P})\, \in\, H^0({\mathbb X},\, {\rm Diff}^3_{\mathbb X}(T{\mathbb X},\,
K^{\otimes 2}_{\mathbb X}))\, .$$

The third statement follows from Remark \ref{rem2} and the first statement of the proposition. For
any $s\, \in\, H^0(U,\, TU)$, where $U\, \subset\, {\mathbb X}$ is an open subset,
with $\delta({\mathcal P})(s)\,=\, 0$, the section of $J^2(T{\mathbb X})\vert_U$ given by $s$ is
flat with respect to the connection ${\mathbb D}({\mathcal P})$. Conversely, given a flat section
$s_1$ of $J^2(T{\mathbb X})\vert_U$, the section $s_2 \, \in\, H^0(U,\, TU)$, given by $s_1$ using the
natural projection $J^2(T{\mathbb X})\, \longrightarrow\, T{\mathbb X}$ (see \eqref{e1}), satisfies
the equation $\delta({\mathcal P})(s_2)\,=\, 0$.

{}From Remark \ref{rem-j} we know that if $\phi\, :\, {\mathbb U}_1\, \longrightarrow\, {\mathbb U}_2$
is a biholomorphism between two open subsets of ${\mathbb P}(\mathbb V)$, then the isomorphism
$\bigwedge^3 J^{2}(T{\mathbb U}_1)\, \longrightarrow\, \bigwedge^3 J^{2}(T{\mathbb U}_2)$ induced by
$\phi$ takes the section of $\bigwedge^3 J^{2}(T{\mathbb U}_1)$ given by the constant function $1$
on $U_1$ to the section of $\bigwedge^3 J^{2}(T{\mathbb U}_2)$ given by the constant function $1$
on $U_2$. In particular, this
holds for $\phi\, \in\, \text{PGL}(\mathbb V)\,=\, \text{Aut}({\mathbb P}(\mathbb V))$.
The connection on $\bigwedge^3 J^{2}(T{\mathbb P}(\mathbb V))$ induced by 
the connection $\mathbb{D}_0$ on $J^2(T{\mathbb P}(\mathbb V))$ coincides with the trivial connection on
$\bigwedge^3 J^{2}(T{\mathbb P}(\mathbb V))$, because ${\mathbb P}(\mathbb V)$ is simply connected.
The fourth statement of the proposition follows from these.
\end{proof}

\subsection{Killing form and holomorphic connection}\label{se2.4}

Recall that the homomorphism
$\alpha_0$ in \eqref{a0} is a Lie algebra isomorphism between $H^0({\mathbb P}(\mathbb V),\,
T{\mathbb P}(\mathbb V))$ and $\mathfrak{sl}(\mathbb V)$. Consider the Killing form $\widehat B$ on the
Lie algebra $H^0({\mathbb P}(\mathbb V),\,
T{\mathbb P}(\mathbb V))$. Using the isomorphism $\psi_2$ in Lemma \ref{lem1}, this symmetric bilinear form
$\widehat B$ produces a fiberwise nondegenerate symmetric bilinear form
\begin{equation}\label{eB}
B_0\, \in\, H^0({\mathbb P}(\mathbb V),\, \text{Sym}^2(J^2(T{\mathbb P}(\mathbb V)))^*)
\,=\, \text{Sym}^2(H^0({\mathbb P}(\mathbb V),\, T{\mathbb P}(\mathbb V)))^*\, .
\end{equation}
Recall that each fiber of $J^2(T{\mathbb P}(\mathbb V))$ is the Lie algebra $\mathfrak{sl}(\mathbb V)$;
the form $B_0$ in \eqref{eB} is the fiberwise Killing form.
The symmetric form $B_0$ is preserved by the holomorphic connection $\mathbb{D}_0$ on
$J^2(T{\mathbb P}(\mathbb V))$ constructed in \eqref{d}. Indeed, this follows immediately from the fact that
both $\mathbb{D}_0$ and $B_0$ are constants with respect to the trivialization of $J^2(T{\mathbb P}(\mathbb V))$
given by $\psi_2$ in Lemma \ref{lem1}.

The vector bundle $J^2(T{\mathbb P}(\mathbb V))$ has a filtration of holomorphic subbundles
\begin{equation}\label{f1}
F^{\mathbb P}_1\,:=\, K_{{\mathbb P}(\mathbb V)}\, \subset\, F^{\mathbb P}_2\, \subset\,
J^2(T{\mathbb P}(\mathbb V))\, ,
\end{equation}
where $F^{\mathbb P}_2$ is the kernel of the composition
$$
J^2(T{\mathbb P}(\mathbb V))\, \longrightarrow\,J^1(T{\mathbb P}(\mathbb V)) \, \longrightarrow\,
T{\mathbb P}(\mathbb V)
$$
of the two projections in the two short exact sequences in Remark \ref{rem-j}; the subbundle
$K_{{\mathbb P}(\mathbb V)}\, \subset\, J^2(T{\mathbb P}(\mathbb V))$ in \eqref{f1} is the one in the
second of the two short exact sequences in Remark \ref{rem-j}. In particular, we have
$\text{rank}(F^{\mathbb P}_j)\,=\,j$. For any point $x\, \in\, {\mathbb P}(\mathbb V)$,
the fiber $(F^{\mathbb P}_1)_x$ is a nilpotent subalgebra of the Lie algebra $J^2(T{\mathbb P}(\mathbb V))_x
\,=\,\mathfrak{sl}(\mathbb V)$. Moreover, the fiber $(F^{\mathbb P}_2)_x$ is the unique Borel subalgebra of
$J^2(T{\mathbb P}(\mathbb V))_x$ containing $(F^{\mathbb P}_1)_x$. Consequently, we have
\begin{equation}\label{f2}
B_0(F^{\mathbb P}_1\otimes F^{\mathbb P}_1)\,=\, 0 \ \ \text{ and }\ \ (F^{\mathbb P}_1)^\perp\,=\,
F^{\mathbb P}_2\, ,
\end{equation}
where $(F^{\mathbb P}_1)^\perp$ denotes the orthogonal bundle for $F^{\mathbb P}_1$ with respect to the
form $B_0$ in \eqref{eB}.

Given a holomorphic vector bundle $W$ on a Riemann surface $\mathbb X$, a holomorphic connection $\mathcal D$ on 
$W$, and a holomorphic subbundle $W'\,\subset\, W$, the composition of homomorphisms
$$
W\, \stackrel{\mathcal D}{\longrightarrow}\, W\otimes K_{\mathbb X} \,
\stackrel{q_{W'}\otimes{\rm Id}}{\longrightarrow}\,
(W/W')\otimes K_{\mathbb X}\, ,
$$
where $q_{W'}\, :\, W\,\longrightarrow\,W/W'$ is the natural quotient map, defines a holomorphic 
section of $\text{Hom}(W',\, (W/W'))\otimes K_{\mathbb X}$. This element of $H^0({\mathbb X},\, 
\text{Hom}(W',\, (W/W'))\otimes K_{\mathbb X})$ is called the \textit{second fundamental form} of 
$W'$ for the connection $\mathcal D$. If ${\mathcal D}(W')\, \subset\, W''\otimes K_{\mathbb X}$, 
where $W''$ is a holomorphic subbundle of $W$ containing $W'$, then the second 
fundamental form of $W'$ for $\mathcal D$ is clearly given by a holomorphic section
\begin{equation}\label{j1}
\zeta_1\,\in\, H^0({\mathbb X},\, \text{Hom}(W',\, W''/W')\otimes K_{\mathbb X})
\end{equation}
using the natural inclusion of $\text{Hom}(W',\, W''/W')\otimes 
K_{\mathbb X}$ in $\text{Hom}(W',\, W/W')\otimes K_{\mathbb X}$. Also, in this case, the second 
fundamental form of $W''$ for $\mathcal D$ is given by a holomorphic section
\begin{equation}\label{j2}
\zeta_2\,\in\, H^0({\mathbb X},\,\text{Hom}(W''/W',\, W/W'')\otimes K_{\mathbb X})
\end{equation}
through the natural inclusion map 
$$H^0({\mathbb X},\, \text{Hom}(W''/W',\, W/W'')\otimes K_{\mathbb X})\, \hookrightarrow\,
H^0({\mathbb X},\, \text{Hom}(W'',\, W/W'')\otimes K_{\mathbb X})\, .$$

For the filtration in \eqref{f1} of $J^2(T{\mathbb P}(\mathbb V))$ equipped with the holomorphic connection
$\mathbb{D}_0$ in \eqref{d}, we have
$$
\mathbb{D}_0(F^{\mathbb P}_1)\,=\, F^{\mathbb P}_2\otimes K_{{\mathbb P}(\mathbb V)}
 \ \ \text{ and }\ \ \mathbb{D}_0(F^{\mathbb P}_2)\,=\,J^2(T{\mathbb P}(\mathbb V))\otimes
K_{{\mathbb P}(\mathbb V)}\, .
$$
These follow from a straight-forward computation.

Let
$$
S(F^{\mathbb P}_1, \mathbb{D}_0)\, \in \, H^0({\mathbb P}(\mathbb V),\, \text{Hom}(F^{\mathbb P}_1,\,
F^{\mathbb P}_2/F^{\mathbb P}_1)\otimes K_{{\mathbb P}(\mathbb V)})\,=\, H^0({\mathbb P}(\mathbb V),\,
{\mathcal O}_{{\mathbb P}(\mathbb V)})
$$
be the second fundamental form of $F^{\mathbb P}_1$ for the connection $\mathbb{D}_0$ (see \eqref{j1}).
Similarly, let
$$
S(F^{\mathbb P}_2, \mathbb{D}_0)\, \in \, H^0({\mathbb P}(\mathbb V),\, \text{Hom}(F^{\mathbb P}_2/
F^{\mathbb P}_1,\, J^2(T{\mathbb P}(\mathbb V))/F^{\mathbb P}_2)\otimes K_{{\mathbb P}(\mathbb V)})\,=\,
H^0({\mathbb P}(\mathbb V),\, {\mathcal O}_{{\mathbb P}(\mathbb V)})
$$
be the section that gives the second fundamental form of $F^{\mathbb P}_2$ for the connection
$\mathbb{D}_0$ (see \eqref{j2}). It is straight-forward to check that both $S(F^{\mathbb P}_1, \mathbb{D}_0)$
and $S(F^{\mathbb P}_2, \mathbb{D}_0)$ coincide with the element of $H^0({\mathbb P}(\mathbb V),\,
{\mathcal O}_{{\mathbb P}(\mathbb V)})$ given by the constant function $1$ on ${\mathbb P}(\mathbb V)$.

\section{Differential operators, connections and projective structures}\label{se3}

\subsection{Differential operators and connections}\label{se3.1}

For a holomorphic vector bundle $W$ on a Riemann surface $\mathbb X$, there is a tautological fiberwise injective
holomorphic homomorphism
\begin{equation}\label{c}
J^{i+j}(W)\, \longrightarrow\, J^i(J^j(W))
\end{equation}
for every $i,\, j\, \geq\, 0$. On $\mathbb X$, we have the
commutative diagram of holomorphic homomorphisms
\begin{equation}\label{e4}
\begin{matrix}
&& 0 && 0\\
&& \Big\downarrow && \Big\downarrow\\
0 & \longrightarrow & K^{\otimes 2}_{\mathbb X} & \stackrel{\iota}{\longrightarrow} &
J^{3}(T{\mathbb X}) & \stackrel{q}{\longrightarrow} & J^{2}(T{\mathbb X}) & \longrightarrow && 0\\
&& ~\, ~\Big\downarrow l && ~\,~ \Big\downarrow \lambda && \Vert\\
0 & \longrightarrow & J^{2}(T{\mathbb X}) \otimes K_{\mathbb X} & \stackrel{\iota'}{\longrightarrow} &
J^1(J^{2}(T{\mathbb X})) & \stackrel{q'}{\longrightarrow} & J^{2}(T{\mathbb X}) & \longrightarrow && 0\\
&& \Big\downarrow &&~\,~\Big\downarrow\mu \\
&& J^{1}(T{\mathbb X})\otimes K_{\mathbb X} & \stackrel{=}{\longrightarrow} &
J^{1}(T{\mathbb X})\otimes K_{\mathbb X}\\
&& \Big\downarrow && \Big\downarrow\\
&& 0 && 0
\end{matrix}
\end{equation}
where the horizontal short exact sequences are as in \eqref{e1}, the vertical short exact sequence
in the left is the short exact sequence in \eqref{e1} tensored with $K_{\mathbb X}$, and $\lambda$ is the
homomorphism in \eqref{c}; the homomorphism $\mu$ in \eqref{e4} is described below.

The projection $J^{2}(T{\mathbb X})\, \longrightarrow\,
J^{1}(T{\mathbb X})$ in \eqref{e1} induces a homomorphism
\begin{equation}\label{c2}
f_1\, :\, J^1(J^{2}(T{\mathbb X}))\, \longrightarrow\,J^1(J^{1}(T{\mathbb X}))
\end{equation}
(see \eqref{e2b}); set $W$ and $W'$ in \eqref{e2b} to be
$J^{2}(T{\mathbb X})$ and $J^{1}(T{\mathbb X})$ respectively to get $f_1$. On the other hand, let
$$
f'_2\, :\, J^1(J^{2}(T{\mathbb X}))\, \longrightarrow\, J^{2}(T{\mathbb X})
$$
be the projection in \eqref{e1}. Composing $f'_2$ with the homomorphism
$J^{2}(T{\mathbb X})\, \longrightarrow\, J^1(J^{1}(T{\mathbb X}))$ in
\eqref{c} we obtain a homomorphism
$$
f_2\, :\, J^1(J^{2}(T{\mathbb X}))\, \longrightarrow\, J^1(J^{1}(T{\mathbb X}))\, .
$$
The composition of homomorphisms
$$
J^1(J^{2}(T{\mathbb X}))\, \stackrel{f_2}{\longrightarrow}\, J^1(J^{1}(T{\mathbb X}))
\, \stackrel{f_3}{\longrightarrow}\, J^{1}(T{\mathbb X})\, ,
$$
where $f_3$ is projection in \eqref{e1},
coincides with the composition of homomorphisms
$$
J^1(J^{2}(T{\mathbb X}))\, \stackrel{f_1}{\longrightarrow}\, J^1(J^{1}(T{\mathbb X}))
\, \stackrel{f_3}{\longrightarrow}\, J^{1}(T{\mathbb X})\, ,
$$
where $f_1$ is the homomorphism in \eqref{c2}. Therefore, from \eqref{e1} we have the homomorphism
$$
\mu\ :=\, f_1-f_2\, :\, J^1(J^{2}(T{\mathbb X}))\, \longrightarrow\, J^{1}(T{\mathbb X})
\otimes K_{\mathbb X}\, ,
$$
where $f_1$ and $f_2$ are constructed above. This homomorphism $\mu$ is 
the one in \eqref{e4}.

Let
\begin{equation}\label{eta}
\eta\, \in\, H^0({\mathbb X},\, \text{Diff}^3_{\mathbb X}(T{\mathbb X},\,
K^{\otimes 2}_{\mathbb X}))\,=\, H^0({\mathbb X},\, K^{\otimes 2}_{\mathbb X}\otimes J^{3}(T{\mathbb X})^*)
\end{equation}
be a differential operator whose symbol is the constant function $1$
on ${\mathbb X}$. This means that $\eta$ gives a holomorphic
splitting of the top horizontal exact sequence in \eqref{e4}. Let
\begin{equation}\label{we}
\widehat{\eta}\, :\, J^{2}(T{\mathbb X})\, \longrightarrow\, J^{3}(T{\mathbb X})
\end{equation}
be the corresponding splitting homomorphism, meaning
\begin{itemize}
\item $\widehat{\eta}(J^{2}(T{\mathbb X}))\,=\, \text{kernel}(J^{3}(T{\mathbb X})\stackrel{\eta}{\rightarrow}
K^{\otimes 2}_{\mathbb X})$, and

\item $q\circ\widehat{\eta}\,=\, \text{Id}_{J^{2}(T{\mathbb X})}$, where $q$ is the projection in
\eqref{e4}.
\end{itemize}
{}From the commutativity of \eqref{e4} we conclude that the homomorphism
\begin{equation}\label{c3}
\lambda\circ \widehat{\eta}\, :\, J^{2}(T{\mathbb X})\, \longrightarrow\, J^1(J^{2}(T{\mathbb X}))\, ,
\end{equation}
where $\lambda$ is the homomorphism in \eqref{e4}, satisfies the equation
$$
q'\circ (\lambda\circ \widehat{\eta})\,=\, \text{Id}_{J^{2}(T{\mathbb X})}\, ,$$
where $q'$ is the projection in \eqref{e4}. Consequently, the homomorphism $\lambda\circ \widehat{\eta}$
defines a holomorphic connection on $J^{2}(T{\mathbb X})$ (see \cite{At}).

Let ${\rm Conn}(J^2(T{\mathbb X}))$ denote the space of all holomorphic connections on 
$J^2(T{\mathbb X})$.

We summarize the above construction in the following lemma.

\begin{lemma}\label{lem3}
Consider the subset
$$
H^0({\mathbb X},\, {\rm Diff}^3_{\mathbb X}(T{\mathbb X},\,
K^{\otimes 2}_{\mathbb X}))_0\, \subset\,
H^0({\mathbb X},\, {\rm Diff}^3_{\mathbb X}(T{\mathbb X},\,
K^{\otimes 2}_{\mathbb X}))
$$
defined by the differential operators whose symbol is the constant
function $1$ on $\mathbb X$. There is a natural map
$$
\varpi\, :\, H^0({\mathbb X},\, {\rm Diff}^3_{\mathbb X}(T{\mathbb X},\,
K^{\otimes 2}_{\mathbb X}))_0\, \longrightarrow\,{\rm Conn}(J^2(T{\mathbb X}))
$$
that sends any $\eta$ as in 
\eqref{eta} to the connection $\lambda\circ \widehat{\eta}$ in \eqref{c3}.
\end{lemma}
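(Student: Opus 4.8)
The plan is to check that $\varpi$ is well-defined, meaning that for every $\eta$ in the subset $H^0({\mathbb X},\, {\rm Diff}^3_{\mathbb X}(T{\mathbb X},\, K^{\otimes 2}_{\mathbb X}))_0$ the homomorphism $\lambda\circ\widehat{\eta}$ of \eqref{c3} genuinely lies in ${\rm Conn}(J^2(T{\mathbb X}))$, and that this assignment depends only on $\eta$. Almost all of the needed work has already been carried out in the construction preceding the statement; what remains is to assemble it. I would begin by recalling Atiyah's jet-theoretic description of connections: a holomorphic connection on a holomorphic vector bundle $W$ over $\mathbb X$ is the same as a holomorphic splitting $D\,:\, W\,\longrightarrow\, J^1(W)$ of the projection $J^1(W)\,\longrightarrow\, W$ in \eqref{e1}, that is, a homomorphism with $q'\circ D\,=\,{\rm Id}_W$. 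Taking $W\,=\, J^2(T{\mathbb X})$, it therefore suffices to verify that $q'\circ(\lambda\circ\widehat{\eta})\,=\,{\rm Id}_{J^2(T{\mathbb X})}$, where $q'$ is the projection in the middle horizontal row of \eqref{e4}.

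The central step exploits the symbol hypothesis. The condition $\sigma(\eta)\,=\,1$ means exactly that $\eta$ restricts to the identity on the subbundle $\iota(K^{\otimes 2}_{\mathbb X})\,\subset\, J^3(T{\mathbb X})$, so $\eta$ splits the top horizontal exact sequence of \eqref{e4} and the associated splitting $\widehat{\eta}$ of \eqref{we} is the unique homomorphism with $q\circ\widehat{\eta}\,=\,{\rm Id}_{J^2(T{\mathbb X})}$. I would then read off from the commutative square in the upper right of \eqref{e4}, whose right vertical arrow is the identity of $J^2(T{\mathbb X})$, the relation $q'\circ\lambda\,=\, q$. Combining the two identities gives
$$
q'\circ(\lambda\circ\widehat{\eta})\,=\,(q'\circ\lambda)\circ\widehat{\eta}\,=\, q\circ\widehat{\eta}\,=\,{\rm Id}_{J^2(T{\mathbb X})}\, ,
$$
so $\lambda\circ\widehat{\eta}$ is indeed a holomorphic connection on $J^2(T{\mathbb X})$.

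Finally, since $\widehat{\eta}$ is uniquely determined by $\eta$ and the homomorphism $\lambda$ of \eqref{e4} is fixed independently of $\eta$, the composition $\lambda\circ\widehat{\eta}$ depends only on $\eta$, so $\varpi$ is a well-defined map. I do not expect any serious obstacle here: the construction has already produced $\widehat{\eta}$ and checked the splitting property in the discussion leading to \eqref{c3}, and the only point genuinely requiring attention is the short diagram chase yielding $q'\circ\lambda\,=\, q$, which is immediate from the commutativity of \eqref{e4}.
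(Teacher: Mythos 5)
Your proposal is correct and takes essentially the same route as the paper: the paper's own justification (in the construction preceding the lemma) is precisely that the symbol hypothesis makes $\widehat{\eta}$ a splitting of the top row of \eqref{e4}, that commutativity of \eqref{e4} yields $q'\circ(\lambda\circ\widehat{\eta})\,=\,{\rm Id}_{J^2(T{\mathbb X})}$, and that Atiyah's characterization of holomorphic connections as splittings of the $1$-jet sequence then applies. The only thing you add is to spell out the intermediate identity $q'\circ\lambda\,=\,q$, which the paper leaves implicit in the phrase ``from the commutativity of \eqref{e4}.''
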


We shall describe the image of the map $\varpi$ in Lemma \ref{lem3}.

Consider the two short exact sequences in Remark \ref{rem-j}. Let
\begin{equation}\label{ef}
K_{\mathbb X} \, :=\, \text{kernel}(\mu_0)\, \subset\, F_2\, :=\, \mu^{-1}_0({\mathcal O}_{\mathbb X})
\, \subset\, J^{2}(T{\mathbb X})
\end{equation}
be the filtration of holomorphic subbundles given by them,
where $\mu_0\, :\, J^{2}(T{\mathbb X})\, \longrightarrow\, J^{1}(T{\mathbb X})$ is the projection in
\eqref{e1}; see \eqref{f1}.

As before, ${\rm Conn}(J^2(T{\mathbb X}))$ denotes the space of all holomorphic connections on 
$J^2(T{\mathbb X})$.

\begin{definition}\label{def1}
A holomorphic connection ${\mathcal D}\, \in\, {\rm Conn}(J^2(T{\mathbb X}))$ will be
called an \textit{oper connection} if the following three conditions hold:
\begin{itemize}
\item ${\mathcal D}(K_{\mathbb X})\,=\, F_2\otimes K_{\mathbb X}$ (see \eqref{ef}),

\item the second fundamental form of $K_{\mathbb X}$ for $\mathcal D$, which, by the first 
condition, is a holomorphic section of $\text{Hom}(K_{\mathbb X},\, {\mathcal O}_{\mathbb 
X})\otimes K_{\mathbb X}\,=\, {\mathcal O}_{\mathbb X}$ (see \eqref{j1}), coincides with the constant function 
$1$ on ${\mathbb X}$, and

\item the holomorphic section of $\text{Hom}(F_2/K_{\mathbb X},\,
J^{2}(T{\mathbb X})/F_2)\otimes K_{\mathbb X}\,=\, {\mathcal O}_{\mathbb X}$ that gives the
second fundamental form of $F_2$ for $\mathcal D$ --- see \eqref{j2} --- coincides with the constant function
$1$ on ${\mathbb X}$.
\end{itemize}
\end{definition}

See \cite{BeDr} for general opers; the oper connections in Definition \ref{def1} are
$\text{GL}(3, {\mathbb C})$--opers on $\mathbb X$.

\begin{lemma}\label{lemm0}
Take any $\eta\, \in\, H^0({\mathbb X},\, {\rm Diff}^3_{\mathbb X}(T{\mathbb X},\,
K^{\otimes 2}_{\mathbb X}))_0$ (see Lemma \ref{lem3}), and let
$$
\varpi(\eta)\, \in \, {\rm Conn}(J^2(T{\mathbb X}))
$$
be the holomorphic connection on $J^2(T{\mathbb X})$ given by $\eta$ in Lemma
\ref{lem3}. Then $\varpi(\eta)$ is an oper connection.
\end{lemma}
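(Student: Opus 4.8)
The plan is to exploit that $\varpi$ is an affine map and that the three conditions in Definition \ref{def1} are insensitive to the lower-order part of $\eta$, so that the lemma reduces to a single already-computed model case. First I would record the affine structure. If $\eta_1,\eta_2\in H^0({\mathbb X},\mathrm{Diff}^3_{\mathbb X}(T{\mathbb X},K^{\otimes 2}_{\mathbb X}))_0$ both have symbol $1$, then the splitting homomorphisms $\widehat{\eta_1},\widehat{\eta_2}$ of \eqref{we} agree after composition with $q$, so their difference maps $J^2(T{\mathbb X})$ into $\mathrm{kernel}(q)=\iota(K^{\otimes 2}_{\mathbb X})$; thus $\widehat{\eta_1}-\widehat{\eta_2}=\iota\circ g$ for a homomorphism $g\colon J^2(T{\mathbb X})\to K^{\otimes 2}_{\mathbb X}$. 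Using the commutativity of \eqref{e4}, namely $\lambda\circ\iota=\iota'\circ l$, I obtain $\varpi(\eta_1)-\varpi(\eta_2)=\iota'\circ l\circ g$. Since the left column of \eqref{e4} is the second-jet sequence of Remark \ref{rem-j} tensored with $K_{\mathbb X}$, the map $l$ identifies $K^{\otimes 2}_{\mathbb X}$ with the subbundle $K_{\mathbb X}\subset J^2(T{\mathbb X})$ of \eqref{ef} twisted by $K_{\mathbb X}$. Hence the difference of the two connections, regarded as a section of $\mathrm{Hom}(J^2(T{\mathbb X}),J^2(T{\mathbb X})\otimes K_{\mathbb X})$, has image contained in $K_{\mathbb X}\otimes K_{\mathbb X}$, the subbundle $K_{\mathbb X}$ of \eqref{ef} twisted by $K_{\mathbb X}$.

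Next I would read off that each of the three quantities in Definition \ref{def1} is unchanged when $\eta$ is altered within the symbol-$1$ operators. Because $K_{\mathbb X}\subset F_2$, adding an endomorphism-valued form with image in $K_{\mathbb X}\otimes K_{\mathbb X}$ cannot destroy the property ${\mathcal D}(K_{\mathbb X})\subseteq F_2\otimes K_{\mathbb X}$. The second fundamental form of $K_{\mathbb X}$ is obtained (see \eqref{j1}) by projecting ${\mathcal D}|_{K_{\mathbb X}}$ to $(F_2/K_{\mathbb X})\otimes K_{\mathbb X}$, and that of $F_2$ (see \eqref{j2}) by projecting ${\mathcal D}|_{F_2}$ to $(J^2(T{\mathbb X})/F_2)\otimes K_{\mathbb X}$; since our perturbation has image in $K_{\mathbb X}\otimes K_{\mathbb X}$, it vanishes after projecting modulo $K_{\mathbb X}\otimes K_{\mathbb X}$ and a fortiori modulo $F_2\otimes K_{\mathbb X}$, so both second fundamental forms are unaffected. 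Consequently it suffices to verify the three conditions, with the prescribed value $1$, for a single symbol-$1$ operator, and this verification is local on ${\mathbb X}$.

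For the base case I would use the model on ${\mathbb P}(\mathbb V)$. The operator $\delta_0$ of Lemma \ref{lem2} has symbol $1$, and by its very construction its associated splitting is $\widehat{\delta_0}=\psi_3\circ(\psi_2)^{-1}$; composing with $\lambda$ and unwinding \eqref{c}, the homomorphism $\varpi(\delta_0)=\lambda\circ\psi_3\circ(\psi_2)^{-1}$ is precisely the splitting whose horizontal sections are the $2$-jets of the global vector fields on ${\mathbb P}(\mathbb V)$, so $\varpi(\delta_0)=\mathbb{D}_0$, the connection of \eqref{d}. The computation at the end of Section \ref{se2.4} gives $\mathbb{D}_0(F^{\mathbb P}_1)=F^{\mathbb P}_2\otimes K_{{\mathbb P}(\mathbb V)}$ together with $S(F^{\mathbb P}_1,\mathbb{D}_0)=S(F^{\mathbb P}_2,\mathbb{D}_0)=1$, i.e. $\mathbb{D}_0$ is an oper connection. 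Finally, since the jet bundles, the filtration \eqref{ef}, the homomorphisms of \eqref{e4}, and the canonical identifications $\mathrm{Hom}(K_{\mathbb X},F_2/K_{\mathbb X})\otimes K_{\mathbb X}\cong{\mathcal O}_{\mathbb X}\cong\mathrm{Hom}(F_2/K_{\mathbb X},J^2(T{\mathbb X})/F_2)\otimes K_{\mathbb X}$ are all natural under biholomorphisms, transporting this computation through a holomorphic coordinate on any chart $U\subset{\mathbb X}$ shows that $\varpi(\eta)|_U$ satisfies the three oper conditions with value $1$; as the charts cover ${\mathbb X}$ and the values are constant functions, $\varpi(\eta)$ is an oper connection.

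The step I expect to be the main obstacle is the first one: correctly locating the difference $\varpi(\eta_1)-\varpi(\eta_2)$ inside $\mathrm{Hom}(J^2(T{\mathbb X}),K_{\mathbb X}\otimes K_{\mathbb X})$ by chasing \eqref{e4} (the identity $\lambda\circ\iota=\iota'\circ l$ and the image of $l$), together with the bookkeeping showing that both second fundamental forms are genuinely insensitive to this perturbation. Everything afterwards is either a recollection from Section \ref{se2.4} or a routine naturality argument.
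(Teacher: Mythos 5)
Your proposal is correct, and its engine is the same as the paper's: both proofs rest on the observation that two symbol-$1$ operators yield connections differing by a $\mathrm{Hom}(J^2(T{\mathbb X}),\,J^2(T{\mathbb X})\otimes K_{\mathbb X})$-valued section with image in $K_{\mathbb X}\otimes K_{\mathbb X}$ (this is the paper's $\widetilde{\theta}$ in \eqref{the2}), that the three conditions of Definition \ref{def1} are insensitive to such a perturbation, and that the model connection $\varpi(\delta_0)\,=\,\mathbb{D}_0$ on ${\mathbb P}(\mathbb V)$ is an oper connection by the computations at the end of Section \ref{se2.4}. Where you genuinely diverge is the choice of base point for the perturbation: the paper takes a globally defined projective structure $\mathcal P$ on $\mathbb X$ and perturbs around $\delta({\mathcal P})$, using Proposition \ref{prop1} to reduce its properties to the standard structure on ${\mathbb P}(\mathbb V)$, whereas you anchor the argument locally, comparing $\eta\vert_U$ with the pullback of $\delta_0$ under a coordinate chart and then invoking the locality of the oper conditions to globalize. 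The trade-off is real: the paper's route is shorter because Proposition \ref{prop1} already supplies a global base connection, but as written it silently assumes that $\mathbb X$ carries a projective structure (true for every Riemann surface, yet a nontrivial input); your localization removes that dependence, at the cost of having to record that $\varpi$ commutes with restriction and that the filtration \eqref{ef}, the diagram \eqref{e4}, and the identifications of the second fundamental forms with sections of ${\mathcal O}_{\mathbb X}$ are all natural under biholomorphisms --- points you do address. A further merit of your write-up is the explicit verification that $\varpi(\delta_0)\,=\,\mathbb{D}_0$, via $\widehat{\delta_0}\,=\,\psi_3\circ(\psi_2)^{-1}$ and the identification of the flat sections with $2$-jets of global vector fields; the paper leaves this step as ``straight-forward.''
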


\begin{proof}
Take a projective structure $\mathcal P$ on $\mathbb X$. Let
$$\delta({\mathcal P})\, \in\, H^0({\mathbb X},\, {\rm Diff}^3_{\mathbb X}(T{\mathbb X},\,
K^{\otimes 2}_{\mathbb X}))_0$$ be the differential operator corresponding to $\mathcal P$
in Proposition \ref{prop1}(2). Let
$$
{\mathbb D}(\mathcal P)\, \in\, {\rm Conn}(J^2(T{\mathbb X}))
$$
be the connection in Proposition \ref{prop1}(1) corresponding to $\mathcal P$.
It can be shown that
\begin{itemize}
\item $\varpi(\delta({\mathcal P}))\,=\,{\mathbb D}(\mathcal P)$, and

\item ${\mathbb D}(\mathcal P)$ is an oper connection.
\end{itemize}
Indeed, from the proof of Proposition \ref{prop1} we know that it suffices to prove this
for the (unique) standard projective structure on ${\mathbb P}({\mathbb V})$. Now, both these statements
are straight-forward for the unique projective structure on ${\mathbb P}({\mathbb V})$.

Next note that $H^0({\mathbb X},\, {\rm Diff}^3_{\mathbb X}(T{\mathbb X},\, K^{\otimes 
2}_{\mathbb X}))_0$ is an affine space for the complex vector space $H^0({\mathbb X},\, 
\text{Hom}(J^2(T{\mathbb X}),\, K^{\otimes 2}_{\mathbb X}))$. So $\eta$ in the statement of the 
lemma is of the form
\begin{equation}\label{the}
\eta\,=\, \delta({\mathcal P})+\theta\, , 
\end{equation}
where $\theta\, \in\, H^0({\mathbb X},\, \text{Hom}(J^2(T{\mathbb X}),\, 
K^{\otimes 2}_{\mathbb X}))$. The composition of homomorphisms
$$
J^2(T{\mathbb X})\, \stackrel{\theta}{\longrightarrow}\, K^{\otimes 2}_{\mathbb X} \,
\stackrel{l}{\longrightarrow}\, J^{2}(T{\mathbb X}) \otimes K_{\mathbb X}\, ,
$$
where $l$ is the homomorphism in \eqref{e4}, will be denoted by $\widetilde{\theta}$. From
\eqref{the} we have
\begin{equation}\label{the2} 
\varpi(\eta) - \varpi(\delta({\mathcal P}))\,=\, \varpi(\eta)- {\mathbb D}(\mathcal P)\,=\, 
\widetilde{\theta}\, .
\end{equation}
Since ${\mathbb D}(\mathcal P)$ is an oper connection, from 
\eqref{the2} it is straight-forward to deduce that $\varpi(\eta)$ is also an oper connection. 
\end{proof}

Take any ${\mathcal D}\, \in\, {\rm Conn}(J^2(T{\mathbb X}))$. Using $\mathcal D$ we shall 
construct an endomorphism of the vector bundle $J^{2}(T{\mathbb X})$. For that, let 
\begin{equation}\label{e5}
p_0\, :\, J^{2}(T{\mathbb X})\, \longrightarrow\, T{\mathbb X} 
\end{equation}
be the composition
$J^{2}(T{\mathbb X})\, \longrightarrow\, J^{1}(T{\mathbb X}) \, 
\longrightarrow\, T{\mathbb X}$
of the projections in the two short exact sequences in Remark 
\ref{rem-j}. For any $x\, \in\, \mathbb X$, and $v\, \in\, J^{2}(T{\mathbb X})_x$, let 
$\widetilde{v}$ be the unique section of $J^{2}(T{\mathbb X})$ defined on a simply connected open 
neighborhood of $x$ such that \begin{itemize} \item $\widetilde{v}$ is flat for the connection 
$\mathcal D$, and

\item $\widetilde{v}(x)\,=\, v$.
\end{itemize}
Now we have a holomorphic homomorphism
\begin{equation}\label{fd}
F_{\mathcal D}\, :\, J^{2}(T{\mathbb X})\, \longrightarrow\, J^{2}(T{\mathbb X})
\end{equation}
that sends any $v\, \in\, J^{2}(T{\mathbb X})_x$, $x\, \in\, \mathbb X$, to the element
of $J^{2}(T{\mathbb X})_x$ defined by the section $p_0(\widetilde{v})$,
where $p_0$ is the projection in \eqref{e5}, and $\widetilde v$ is constructed
as above using $v$ and $\mathcal D$.

\begin{lemma}\label{lem4}
A holomorphic connection ${\mathcal D}\,\in\,{\rm Conn}(J^2(T{\mathbb X}))$ lies in
${\rm image}(\varpi)$ (see Lemma \ref{lem3}) if and only if
\begin{itemize}
\item $\mathcal D$ is an oper connection, and

\item $F_{\mathcal D}\,=\, {\rm Id}_{J^{2}(T{\mathbb X})}$, where $F_{\mathcal D}$
is constructed in \eqref{fd}.
\end{itemize}
\end{lemma}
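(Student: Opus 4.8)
The plan is to prove the two directions separately, exploiting the affine structure of $H^0({\mathbb X},\, {\rm Diff}^3_{\mathbb X}(T{\mathbb X},\, K^{\otimes 2}_{\mathbb X}))_0$ over the vector space $H^0({\mathbb X},\, \text{Hom}(J^2(T{\mathbb X}),\, K^{\otimes 2}_{\mathbb X}))$, which was already used in the proof of Lemma \ref{lemm0}. For the forward direction, suppose ${\mathcal D}\,=\,\varpi(\eta)$ for some $\eta$. That $\mathcal D$ is an oper connection is precisely the content of Lemma \ref{lemm0}, so the only new point is to verify $F_{\mathcal D}\,=\,{\rm Id}$. The key observation here is the meaning of the construction $\varpi$: the connection $\lambda\circ\widehat{\eta}$ in \eqref{c3} is built so that its flat sections are exactly the solutions of the differential operator $\eta$, and for such a connection the splitting $\widehat{\eta}$ identifies $J^2(T{\mathbb X})$ with the $2$-jets of solutions. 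Concretely, if $\widetilde v$ is a local flat section with $\widetilde v(x)\,=\,v$, then $\widetilde v$ is itself the $2$-jet prolongation of the vector field $p_0(\widetilde v)$; hence the $2$-jet of $p_0(\widetilde v)$ recovers $\widetilde v$, giving $F_{\mathcal D}(v)\,=\,v$. This is the exact analogue of Remark \ref{rem-1}, and the plan is to reduce it to that remark by working in a local projective chart, just as in Proposition \ref{prop1}.

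For the converse, suppose ${\mathcal D}$ is an oper connection satisfying $F_{\mathcal D}\,=\,{\rm Id}$; I must produce $\eta\,\in\,H^0({\mathbb X},\, {\rm Diff}^3_{\mathbb X}(T{\mathbb X},\, K^{\otimes 2}_{\mathbb X}))_0$ with $\varpi(\eta)\,=\,{\mathcal D}$. First I would fix a reference projective structure $\mathcal P$ on $\mathbb X$ (one exists since every Riemann surface carries one), giving the base connection ${\mathbb D}(\mathcal P)\,=\,\varpi(\delta({\mathcal P}))$, which is an oper connection with $F_{{\mathbb D}(\mathcal P)}\,=\,{\rm Id}$ by the forward direction. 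The difference ${\mathcal D}\,-\,{\mathbb D}(\mathcal P)$ lies in $H^0({\mathbb X},\,\text{End}(J^2(T{\mathbb X}))\otimes K_{\mathbb X})$. Recalling from the proof of Lemma \ref{lemm0} that $\varpi(\delta({\mathcal P})\,+\,\theta)\,-\,{\mathbb D}(\mathcal P)\,=\,\widetilde\theta\,=\,l\circ\theta$, the task is to show that the difference ${\mathcal D}\,-\,{\mathbb D}(\mathcal P)$ is of the form $l\circ\theta$ for some $\theta\,\in\,H^0({\mathbb X},\,\text{Hom}(J^2(T{\mathbb X}),\, K^{\otimes 2}_{\mathbb X}))$; then $\eta\,:=\,\delta({\mathcal P})\,+\,\theta$ works.

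The heart of the matter is therefore identifying the image of the map $\theta\,\longmapsto\,l\circ\theta$, that is, the locus of all endomorphism-valued $1$-forms that arise as differences $\varpi(\eta)\,-\,{\mathbb D}(\mathcal P)$, and showing that the two stated conditions (oper plus $F_{\mathcal D}\,=\,{\rm Id}$) cut out exactly this image. Since $l$ is the injection $K^{\otimes 2}_{\mathbb X}\,\hookrightarrow\,J^2(T{\mathbb X})\otimes K_{\mathbb X}$ from \eqref{e4}, its image consists of those $A\,\in\,H^0({\mathbb X},\,\text{End}(J^2(T{\mathbb X}))\otimes K_{\mathbb X})$ that factor through the projection $J^2(T{\mathbb X})\,\longrightarrow\,K^{\otimes 2}_{\mathbb X}$ (the symbol quotient) followed by a map landing in the subbundle $K^{\otimes 2}_{\mathbb X}\,\subset\,J^2(T{\mathbb X})$; that is, $A(J^2(T{\mathbb X}))\,\subset\,K_{\mathbb X}\otimes K_{\mathbb X}$ and $A$ kills $F_2$. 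I expect the main obstacle to be the bookkeeping that translates the two geometric hypotheses into exactly these rank and factorization constraints on $A\,=\,{\mathcal D}\,-\,{\mathbb D}(\mathcal P)$: the oper condition forces the second fundamental forms of both $\mathcal D$ and ${\mathbb D}(\mathcal P)$ to agree (both equal $1$), which makes $A$ vanish on the associated-graded pieces and hence map $F_2$ into $K_{\mathbb X}$, while the hypothesis $F_{\mathcal D}\,=\,F_{{\mathbb D}(\mathcal P)}\,=\,{\rm Id}$ should be what kills $A$ on all of $F_2$ and confines its image to $K^{\otimes 2}_{\mathbb X}$. Verifying that $F_{\mathcal D}$ encodes precisely this factorization — unwinding its definition \eqref{fd} in terms of flat sections and their images under $p_0$ relative to the filtration $K_{\mathbb X}\,\subset\,F_2\,\subset\,J^2(T{\mathbb X})$ — is the computational crux, and I would carry it out by comparing flat frames of $\mathcal D$ and ${\mathbb D}(\mathcal P)$ adapted to the oper filtration.
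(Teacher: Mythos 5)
Your forward direction is essentially correct, and is in fact a more self-contained argument than the paper's: the key point, that the flat sections of $\varpi(\eta)$ are exactly the $2$-jet prolongations $j^2u$ of solutions of $\eta(u)\,=\,0$ (one checks, using the commutativity and exactness of \eqref{e4}, that $\varpi(\eta)(j^2u)$ is the image of $\eta(u)$ under $l$, and that solutions of a third-order operator with symbol $1$ realize every $2$-jet at every point), immediately gives $F_{\varpi(\eta)}\,=\,{\rm Id}_{J^2(T{\mathbb X})}$, while Lemma \ref{lemm0} supplies the oper property. The paper instead verifies the model case on ${\mathbb P}({\mathbb V})$ and propagates via the affine structure; both routes work. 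Your converse also starts with the same skeleton as the paper: write ${\mathcal D}\,=\,{\mathbb D}({\mathcal P})+A$ and try to show $A\,=\,l\circ\theta$.

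However, the converse contains a genuine error, located precisely at the step you identify as the crux. The image of the map $\theta\,\longmapsto\,l\circ\theta$, for $\theta\,\in\,H^0({\mathbb X},\,{\rm Hom}(J^2(T{\mathbb X}),\,K^{\otimes 2}_{\mathbb X}))$ arbitrary, consists exactly of those $A$ whose image lies in the subbundle $l(K^{\otimes 2}_{\mathbb X})\,\subset\,J^2(T{\mathbb X})\otimes K_{\mathbb X}$; there is \emph{no} condition on the kernel of $A$, because the arbitrary homomorphism $\theta$ is applied first and the fixed injection $l$ second. You have inverted these roles (positing a fixed ``projection $J^2(T{\mathbb X})\,\to\,K^{\otimes 2}_{\mathbb X}$'', which does not exist canonically, followed by an arbitrary map), and this is what produces your extra requirement that $A$ kill $F_2$. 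That requirement is not just unnecessary, it is false for the connections at hand: take any $\theta$ with $\theta\vert_{F_2}\,\neq\,0$; then $\varpi(\delta({\mathcal P}))$ and $\varpi(\delta({\mathcal P})+\theta)$ both satisfy the two hypotheses of the lemma (by Lemma \ref{lemm0} together with your own forward direction), yet their difference $l\circ\theta$ does not vanish on $F_2$. So the program of deriving ``$A$ kills $F_2$'' from the hypotheses cannot be completed. In effect you have conflated ${\rm image}(\varpi)$ --- an affine space over the rank-three space $H^0({\mathbb X},\,{\rm Hom}(J^2(T{\mathbb X}),\,K^{\otimes 2}_{\mathbb X}))$ --- with the smaller set of connections arising from projective structures, which is an affine space over $H^0({\mathbb X},\,K^{\otimes 3}_{\mathbb X})$ and is cut out only after imposing the two \emph{additional} conditions (3) and (4) of Corollary \ref{cor1}. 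The correct way to finish is to use only the image condition: $F_{\mathcal D}\,=\,{\rm Id}$ (for ${\mathcal D}$ and for ${\mathbb D}({\mathcal P})$; the oper hypothesis is not even needed here) says that the splitting $J^2(T{\mathbb X})\,\to\,J^1(J^2(T{\mathbb X}))$ determined by each connection takes values in $\lambda(J^3(T{\mathbb X}))\,=\,{\rm kernel}(\mu)$, so their difference $A$, which takes values in $\iota'(J^2(T{\mathbb X})\otimes K_{\mathbb X})$, takes values in $\lambda(J^3(T{\mathbb X}))\cap\iota'(J^2(T{\mathbb X})\otimes K_{\mathbb X})\,=\,\iota'(l(K^{\otimes 2}_{\mathbb X}))$ by the exactness of the rows and columns of \eqref{e4}; hence $A\,=\,l\circ\widetilde\beta$ with $\widetilde\beta$ unconstrained, and ${\mathcal D}\,=\,\varpi(\delta({\mathcal P})+\widetilde\beta)$.
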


\begin{proof}
As in the proof of Lemma \ref{lemm0}, first take a projective structure $\mathcal P$ on
$\mathbb X$. Let $\delta({\mathcal P})$ (respectively, ${\mathbb D}(\mathcal P)$) be
the differential operator (respectively, holomorphic connection) corresponding to $\mathcal P$
as in the proof of Lemma \ref{lemm0}. We saw that $\varpi(\delta(\mathcal P))\,=\,
{\mathbb D}({\mathcal P})$ and ${\mathbb D}({\mathcal P})$ is an oper connection.
It can be shown that $F_{{\mathbb D}({\mathcal P})}\,=\,
{\rm Id}_{J^{2}(T{\mathbb X})}$, where $F_{{\mathbb D}({\mathcal P})}$
is constructed as in \eqref{fd}. Indeed, it suffices to prove this for
the unique projective structure on ${\mathbb P}({\mathbb V})$, which is actually straight-forward.

Now take $\eta\,=\, \delta({\mathcal P})+\theta$ as in \eqref{the}. Since
$\varpi(\delta(\mathcal P))\,=\,{\mathbb D}({\mathcal P})$ is an oper connection, and
$F_{{\mathbb D}({\mathcal P})}\,=\,{\rm Id}_{J^{2}(T{\mathbb X})}$, from \eqref{the2}
it follows that $F_{\varpi (\eta)}\,=\, {\rm Id}_{J^{2}(T{\mathbb X})}$, where $F_{\varpi(\eta)}$
is constructed as in \eqref{fd} for the connection $\varpi(\eta)$; it was shown in Lemma \ref{lemm0} that
$\varpi(\eta)$ is an oper connection.

To prove the converse, take any ${\mathcal D}\,\in\,{\rm Conn}(J^2(T{\mathbb X}))$. Then
\begin{equation}\label{nb}
{\mathcal D}\,=\, {\mathbb D}(\mathcal P)+\beta\, ,
\end{equation}
where $\beta\, \in\, H^0({\mathbb X},\, {\rm End}(J^2(T{\mathbb X}))\otimes K_{\mathbb X})$.

Now assume that
\begin{itemize}
\item $\mathcal D$ is an oper connection, and

\item $F_{\mathcal D}\,=\, {\rm Id}_{J^{2}(T{\mathbb X})}$, where $F_{\mathcal D}$
is constructed in \eqref{fd}.
\end{itemize}
Since ${\mathbb D}(\mathcal P)$ also satisfies these two conditions, it follows that
there is a unique section
$$\widetilde{\beta}\, \in\, H^0({\mathbb X},\, \text{Hom}(J^2(T{\mathbb X}),\, 
K^{\otimes 2}_{\mathbb X}))
$$
such that $\beta$ in \eqref{nb} coincides with the composition of homomorphisms
$$
J^2(T{\mathbb X})\, \stackrel{\widetilde\beta}{\longrightarrow}\, K^{\otimes 2}_{\mathbb X} \,
\stackrel{l}{\longrightarrow}\, J^{2}(T{\mathbb X}) \otimes K_{\mathbb X}\, ,
$$
where $l$ is the homomorphism in \eqref{e4}. Consequently, we have
$$
\varpi(\delta({\mathcal P})+\widetilde{\beta})\,=\, \mathcal D\, .
$$
This proves the lemma.
\end{proof}

\subsection{Differential operator given by projective structures}\label{sec3.2}

Given a projective structure $\mathcal P$ on a Riemann surface $\mathbb X$, recall that in Proposition 
\ref{prop1}(2) we constructed an element of $H^0({\mathbb X},\, \text{Diff}^3_{\mathbb 
X}(T{\mathbb X},\, K^{\otimes 2}_{\mathbb X}))_0$.

\begin{proposition}\label{prop2}
The space of all projective structures on $\mathbb X$ is in a natural bijection with the subspace
of $H^0({\mathbb X},\, {\rm Diff}^3_{\mathbb X}(T{\mathbb X},\,
K^{\otimes 2}_{\mathbb X}))_0$ consisting all differential operators $\delta$ satisfying
the following two conditions:
\begin{enumerate}
\item The connection on $\bigwedge^3 J^{2}(T{\mathbb X})$ induced by the connection 
$\varpi(\delta)$ on $J^2(T{\mathbb X})$ (see Lemma \ref{lem3}) coincides with the trivial 
connection on $\bigwedge^3 J^{2}(T{\mathbb X})$ (see Remark \ref{rem-j}).

\item If $s$ and $t$ are locally defined holomorphic sections of $T{\mathbb X}$ such that
$\delta(s)\,=\, 0\, =\, \delta(t)$, then $\delta([s,\, t])\,=\, 0$, where $[s,\, t]$ is the
usual Lie bracket of the vector fields $s$ and $t$.
\end{enumerate}
\end{proposition}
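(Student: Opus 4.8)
Given a projective structure $\mathcal P$ on $\mathbb X$, we already have from Proposition \ref{prop1}(2) the associated differential operator $\delta(\mathcal P) \in H^0({\mathbb X},\, {\rm Diff}^3_{\mathbb X}(T{\mathbb X},\, K^{\otimes 2}_{\mathbb X}))_0$. The first task is to verify that $\delta(\mathcal P)$ satisfies conditions (1) and (2). Condition (1) is immediate from Proposition \ref{prop1}(4), which states precisely that the induced connection on $\bigwedge^3 J^2(T{\mathbb X}) = {\mathcal O}_{\mathbb X}$ coincides with the trivial (de Rham) connection, combined with the identity $\varpi(\delta(\mathcal P)) = {\mathbb D}(\mathcal P)$ established in the proof of Lemma \ref{lemm0}. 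Condition (2) follows from Proposition \ref{prop1}(3): the sheaf of solutions of $\delta(\mathcal P)$ is identified with the sheaf of flat sections of ${\mathbb D}(\mathcal P)$, and this local system is closed under the Lie bracket of vector fields. So the assignment $\mathcal P \mapsto \delta(\mathcal P)$ lands in the claimed subspace.

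\emph{Injectivity} of $\mathcal P \mapsto \delta(\mathcal P)$ should be straightforward: a projective structure is recoverable from its local solution sheaf, and by Proposition \ref{prop1}(3) this sheaf is exactly the kernel of $\delta(\mathcal P)$. The real work is \emph{surjectivity}. The plan is to take an arbitrary $\delta$ in the subspace satisfying (1) and (2), form the oper connection ${\mathcal D} = \varpi(\delta)$ (which is an oper connection by Lemma \ref{lemm0}), and reconstruct a projective atlas from its flat sections. The idea is that the kernel local system $\mathbb L$ of $\delta$ — equivalently, the local system of flat sections of $\mathcal D$ for the natural projection to $T{\mathbb X}$, as in the converse direction of Proposition \ref{prop1}(3) — consists of locally defined holomorphic vector fields on $\mathbb X$. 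One shows that each stalk is a three-dimensional complex vector space: this is where condition (1) enters, guaranteeing via the trivial connection on the determinant that the local system has the correct rank and that the monodromy lies in ${\rm SL}$ rather than a larger group, so that the fibers match $\mathfrak{sl}(\mathbb V)$ in a way compatible with the standard model.

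The heart of the argument, and \textbf{the main obstacle}, is to equip these stalks with a Lie algebra structure isomorphic to $\mathfrak{sl}(\mathbb V)$ and then integrate this to a genuine ${\rm PGL}(\mathbb V)$-valued developing map. Condition (2) is exactly what makes the stalks of $\mathbb L$ closed under the Lie bracket of vector fields, so each stalk becomes a three-dimensional Lie algebra of holomorphic vector fields. The plan is to argue that such a Lie algebra of vector fields on a one-dimensional complex domain, acting with the solutions forming a rank-three local system, must be locally isomorphic to the standard action of $\mathfrak{sl}(\mathbb V)$ on ${\mathbb P}(\mathbb V)$ — this is the classical fact underlying the correspondence between third-order operators with the right symbol and projective structures. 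From this local isomorphism one produces, on each simply connected chart $U_i$, a holomorphic embedding $\phi_i \colon U_i \to {\mathbb P}(\mathbb V)$ intertwining the two $\mathfrak{sl}(\mathbb V)$-actions; the transition maps then automatically lie in ${\rm PGL}(\mathbb V)$ because they conjugate one standard action into the other, yielding the desired projective structure $\mathcal P$. One finally checks that $\delta(\mathcal P) = \delta$, which holds because both operators have the same symbol and the same kernel sheaf, and a third-order operator with prescribed symbol is determined by its kernel.

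The subtle point to handle carefully is the passage from the abstract Lie bracket on stalks (guaranteed by condition (2)) to the rigid identification with $\mathfrak{sl}(\mathbb V)$: one must rule out the other three-dimensional complex Lie algebras and confirm that the resulting action is the $\mathbb P(\mathbb V)$-action rather than some other transitive action, which is precisely where the oper structure of $\varpi(\delta)$ together with the normalization of the symbol and conditions (1)–(2) pin down the model uniquely.
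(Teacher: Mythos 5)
Your proposal is correct in substance, but it takes a genuinely different route from the paper. The paper does not reconstruct a projective atlas at all: it quotes the earlier result of \cite{Bi}, which provides a bijection $F$ between the set $H^0({\mathbb X},\, {\rm Diff}^3_{\mathbb X}(T{\mathbb X},\, K^{\otimes 2}_{\mathbb X}))_1$ of operators satisfying condition (1) and the product $\textbf{P}({\mathbb X})\times H^0({\mathbb X},\, K^{\otimes 3}_{\mathbb X})$, so that every $\eta$ satisfying (1) decomposes as $\eta\,=\,\Theta(\Psi'(\eta))+\xi$ with $\xi$ a cubic differential; injectivity of $\Theta$ is the identity $\Psi\circ\Theta\,=\,{\rm Id}$, and surjectivity reduces to showing that condition (2) forces $\xi\,=\,0$. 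You instead run the classical argument from scratch: flat sections of $\varpi(\delta)$ are the $2$-jet prolongations of solutions of $\delta$, condition (2) turns each stalk of the solution local system into a three-dimensional Lie algebra of holomorphic vector fields, Lie's classification of such algebras in one variable identifies each stalk locally with the projective $\mathfrak{sl}(\mathbb V)$-action, the resulting charts have M\"obius transition maps, and finally $\delta\,=\,\delta({\mathcal P})$ because a third-order operator with symbol $1$ is determined by its rank-three solution sheaf (a nonzero operator of order at most two cannot annihilate a three-dimensional solution space). The paper's route buys brevity at the cost of importing \cite{Bi}; your route is self-contained and makes transparent why (2) is the decisive condition.

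Two caveats. First, the role you assign to condition (1) is spurious: the solution local system has rank three for \emph{any} $\delta$ with symbol $1$, because $\varpi(\delta)$ is a holomorphic, hence flat, connection on the rank-three bundle $J^2(T{\mathbb X})$, and its flat sections biject with solutions of $\delta$; no determinant or ${\rm SL}$-monodromy consideration enters. In fact your argument never genuinely uses (1): it proves the stronger statement that symbol $1$ together with (2) already implies $\delta$ comes from a projective structure, whence (1) holds automatically by Proposition \ref{prop1}(4). This is consistent with the proposition as stated, but you should say it explicitly instead of routing the rank count through (1). Second, the classical fact carrying your whole argument --- that a three-dimensional bracket-closed solution algebra of such an operator is locally the projective algebra --- is invoked rather than proved. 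It is true and can be filled in as follows: pick a solution nonvanishing at the given point (possible since the initial-value map of the ODE is onto), rectify it to $\partial_w$ in a local coordinate $w$; then the stalk is closed under $d/dw$, hence spanned by generalized exponentials, and closure under the Wronskian bracket leaves only the spans of $\{1,\, w,\, w^2\}$ and of $\{1,\, e^{\lambda w},\, e^{-\lambda w}\}$, both of which are the projective algebra in a suitable coordinate. With that lemma supplied, your proof is complete.
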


\begin{proof}
Let $\textbf{P}({\mathbb X})$ denote the space of all projective structures on $\mathbb X$. Let
$$\textbf{D}({\mathbb X})\, \subset\, H^0({\mathbb X},\, {\rm Diff}^3_{\mathbb X}(T{\mathbb X},\,
K^{\otimes 2}_{\mathbb X}))_0
$$
be the subset consisting of all differential operators $\delta$ satisfying the following conditions:
\begin{enumerate}
\item The connection on $\bigwedge^3 J^{2}(T{\mathbb X})$ induced by the connection 
$\varpi(\delta)$ on $J^2(T{\mathbb X})$ coincides with the trivial 
connection on $\bigwedge^3 J^{2}(T{\mathbb X})$.

\item If $s$ and $t$ are locally defined holomorphic sections of $T{\mathbb X}$ such that
$\delta(s)\,=\, 0\, =\, \delta(t)$, then $\delta([s,\, t])\,=\, 0$.
\end{enumerate}

Let ${\mathcal P}\,\in\, \textbf{P}({\mathbb X})$ be a projective structure on $\mathbb X$. Let
$$
\delta\, \in\, H^0({\mathbb X},\, {\rm Diff}^3_{\mathbb X}(T{\mathbb X},\,
K^{\otimes 2}_{\mathbb X}))_0
$$
be the differential operator given by $\mathcal P$ (see Proposition \ref{prop1}(2)).
In view of Proposition \ref{prop1}(4), the first one
of the above two conditions on $\delta$ is satisfied; also, the second 
condition is satisfied because of Proposition \ref{prop1}(3). Therefore, we get a map
\begin{equation}\label{Th}
\Theta\, :\, \textbf{P}({\mathbb X})\, \longrightarrow\,\textbf{D}({\mathbb X})
\end{equation}
that sends any $\mathcal P$ to the corresponding differential operator $\delta$.

There is a natural map
\begin{equation}\label{Psi}
\Psi\, :\, \textbf{D}({\mathbb X})\, \longrightarrow\,\textbf{P}({\mathbb X})
\end{equation}
(see \cite[p.~14, (3.7)]{Bi}). To clarify, set $n\,=\,2$ in the definition of $\mathcal B$ in
\cite[p.~13]{Bi}. Then ${\mathcal B}_0$ in \cite[p.~13]{Bi} coincides with the subset of
$H^0({\mathbb X},\, {\rm Diff}^3_{\mathbb X}(T{\mathbb X},\,
K^{\otimes 2}_{\mathbb X}))$ consisting of all $\delta'$ such that
\begin{itemize}
\item the symbol of $\delta'$ is the constant function $1$, and

\item the holomorphic connection on $\bigwedge^3 J^{2}(T{\mathbb X})$ induced by
the connection $\varpi(\delta')$ on $J^{2}(T{\mathbb X})$
coincides with the trivial connection on $\bigwedge^3 J^{2}(T{\mathbb X})\,=\, {\mathcal O}_{\mathbb X}$.
\end{itemize}

For the maps $\Theta$ and $\Psi$ constructed in \eqref{Th} and \eqref{Psi} respectively, we have
\begin{equation}\label{e6}
\Psi\circ\Theta\,=\, \text{Id}_{\textbf{P}({\mathbb X})}\, ;
\end{equation}
this follows from the combination of the facts that
\begin{itemize}
\item the map $F$ in \cite[p.~19, (5.4)]{Bi} is a bijection,

\item the map $\Psi$ in \eqref{Psi} coincides with the composition of the map $F$ in
\cite[p.~19, (5.4)]{Bi} with the natural projection $\textbf{P}({\mathbb X})\times
H^0({\mathbb X} , \, K^{\otimes 3}_{\mathbb X})\, \longrightarrow\,
H^0({\mathbb X} , \, K^{\otimes 3}_{\mathbb X})$, and

\item $F^{-1}({\mathcal P},\, 0)\,=\, \Theta ({\mathcal P})$ for all
${\mathcal P}\, \in\, \textbf{P}({\mathbb X})$, where $\Theta$ is the map in \eqref{Th}.
\end{itemize}

{}From \eqref{e6} we conclude that the map $\Theta$ in \eqref{Th} is injective.
We will prove that the map $\Theta$ is surjective as well.

Let
$$
H^0({\mathbb X},\, {\rm Diff}^3_{\mathbb X}(T{\mathbb X},\,
K^{\otimes 2}_{\mathbb X}))_1 \, \subset\,
H^0({\mathbb X},\, {\rm Diff}^3_{\mathbb X}(T{\mathbb X},\,
K^{\otimes 2}_{\mathbb X}))_0
$$
be the subset consisting of all
$\eta\, \in\, H^0({\mathbb X},\, {\rm Diff}^3_{\mathbb X}(T{\mathbb X},\,
K^{\otimes 2}_{\mathbb X}))_0$
such that the connection on $\bigwedge^3 J^{2}(T{\mathbb X})$ induced by the connection 
$\varpi(\eta)$ on $J^2(T{\mathbb X})$ (see Lemma \ref{lem3}) coincides with the trivial 
connection on $\bigwedge^3 J^{2}(T{\mathbb X})$ (see Remark \ref{rem-j}). Let
\begin{equation}\label{Psip}
\Psi'\, :\, H^0({\mathbb X},\, {\rm Diff}^3_{\mathbb X}(T{\mathbb X},\,
K^{\otimes 2}_{\mathbb X}))_1 \, \longrightarrow\,\textbf{P}({\mathbb X})
\end{equation}
be the map in \cite[p.~14, (3.7)]{Bi}; recall that the map $\Psi$ in \eqref{Psi}
is the restriction of this map $\Psi'$ to the subset $\textbf{D}({\mathbb X})$ of
$H^0({\mathbb X},\, {\rm Diff}^3_{\mathbb X}(T{\mathbb X},\,
K^{\otimes 2}_{\mathbb X}))_1$.

Take any
\begin{equation}\label{eeta}
\eta\, \in\, H^0({\mathbb X},\, {\rm Diff}^3_{\mathbb X}(T{\mathbb X},\,
K^{\otimes 2}_{\mathbb X}))_1\, .
\end{equation}
{}From the
isomorphism $F$ in \cite[p.~19, (5.4)]{Bi} we know that there is a holomorphic section 
$$
\xi\, \in\, H^0({\mathbb X},\, K^{\otimes 3}_{\mathbb X})
$$
such that
\begin{equation}\label{xi}
\eta\,=\, \Theta(\Psi'(\eta))+\xi\, ,
\end{equation}
where $\Psi'$ and $\Theta$ are the maps in \eqref{Psip} and \eqref{Th} respectively.

We now impose the following condition on $\eta$ in \eqref{eeta}:

If $s$ and $t$ are locally defined holomorphic sections of $T{\mathbb X}$ such that
$\eta(s)\,=\, 0\, =\, \eta(t)$, then $\eta([s,\, t])\,=\, 0$.

Since $\Theta(\Psi'(\eta))\, \in\, \textbf{D}({\mathbb X})$, in particular, 
$\Theta(\Psi'(\eta))$ satisfies the above condition, from the above
condition on $\eta$ it follows that the section $\xi$ in \eqref{xi} vanishes
identically. So, we have $\eta\,=\, \Theta(\Psi'(\eta))$. This implies that
the map $\Theta$ is surjective.
\end{proof}

As before, ${\rm Conn}(J^2(T{\mathbb X}))$ denotes the space of all holomorphic
connections on $J^2(T{\mathbb X})$.
Lemma \ref{lem4} and Proposition \ref{prop2} combine together to give the following:

\begin{corollary}\label{cor1}
The space of all projective structures on $\mathbb X$ is in a natural bijection with the subset
of ${\rm Conn}(J^2(T{\mathbb X}))$ defined by all connections
$\mathcal D$ satisfying the following four conditions:
\begin{enumerate}
\item $\mathcal D$ is an oper connection,

\item $F_{\mathcal D}\,=\, {\rm Id}_{J^{2}(T{\mathbb X})}$, where $F_{\mathcal D}$
is constructed in \eqref{fd},

\item the connection on $\bigwedge^3 J^{2}(T{\mathbb X})$ induced by $\mathcal D$, coincides with
the trivial connection on $\bigwedge^3 J^{2}(T{\mathbb X})$, and

\item if $s$ and $t$ are locally defined holomorphic sections of $T{\mathbb X}$ such that
the sections of $J^{2}(T{\mathbb X})$ corresponding to $s$ and $t$ are flat with respect to
${\mathcal D}$, then the section of $J^{2}(T{\mathbb X})$ corresponding to $[s,\, t]$ is also
flat with respect to ${\mathcal D}$.
\end{enumerate}
\end{corollary}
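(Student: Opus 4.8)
The plan is to assemble the desired bijection by composing the two results already established, namely the map $\Theta$ of Proposition \ref{prop2} and the characterization of $\mathrm{image}(\varpi)$ from Lemma \ref{lem4}, and then to check that the image lands exactly in the four-condition locus. First I would recall that Proposition \ref{prop2} furnishes a natural bijection between $\textbf{P}(\mathbb{X})$ and the subset $\textbf{D}(\mathbb{X}) \subset H^0(\mathbb{X},\, \mathrm{Diff}^3_{\mathbb{X}}(T\mathbb{X},\, K^{\otimes 2}_{\mathbb{X}}))_0$ cut out by conditions (1) and (2) there (triviality of the induced connection on $\bigwedge^3 J^2(T\mathbb{X})$, and closure under Lie bracket of the solution sheaf). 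The idea is then to transport this via the map $\varpi$ of Lemma \ref{lem3}. Thus the candidate bijection is the composite $\varpi \circ \Theta$, sending a projective structure $\mathcal{P}$ to the oper connection $\varpi(\delta(\mathcal{P})) = \mathbb{D}(\mathcal{P})$.

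The main verification is to show that $\varpi$ restricts to a bijection from $\textbf{D}(\mathbb{X})$ onto the four-condition subset of $\mathrm{Conn}(J^2(T\mathbb{X}))$. For injectivity and surjectivity onto the correct image, I would invoke Lemma \ref{lem4}: a connection $\mathcal{D}$ lies in $\mathrm{image}(\varpi)$ precisely when $\mathcal{D}$ is an oper connection and $F_{\mathcal{D}} = \mathrm{Id}_{J^2(T\mathbb{X})}$, which are exactly conditions (1) and (2) of the corollary. So $\varpi$ identifies $H^0(\mathbb{X},\, \mathrm{Diff}^3_{\mathbb{X}}(T\mathbb{X},\, K^{\otimes 2}_{\mathbb{X}}))_0$ with the set of connections satisfying (1) and (2). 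It remains to check that under this identification conditions (3) and (4) on the connection side correspond precisely to the two defining conditions of $\textbf{D}(\mathbb{X})$ on the operator side. Condition (3) matches the first condition of $\textbf{D}(\mathbb{X})$ verbatim, since both assert that the induced connection on $\bigwedge^3 J^2(T\mathbb{X})$ is trivial. For condition (4), I would use the correspondence, recorded in the proof of Proposition \ref{prop1}(3) and in Lemma \ref{lem3}, between flat sections of $\varpi(\delta)$ and solutions of $\delta$: a local section $s$ of $T\mathbb{X}$ satisfies $\delta(s) = 0$ if and only if the induced section of $J^2(T\mathbb{X})$ is flat for $\varpi(\delta)$, so that the Lie-bracket closure of the solution sheaf of $\delta$ is equivalent to the Lie-bracket closure of the flat sections of $\mathcal{D} = \varpi(\delta)$, which is exactly condition (4).

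Putting these together, $\varpi$ carries $\textbf{D}(\mathbb{X})$ bijectively onto the locus defined by conditions (1)--(4), and composing with the bijection $\Theta$ of Proposition \ref{prop2} yields the claimed natural bijection between projective structures and this locus. The one point demanding genuine care is the equivalence in condition (4): I must be sure that the dictionary between $\delta$-solutions and $\varpi(\delta)$-flat sections is a bijection at the level of local sections, not merely of stalks, so that the bracket-closure conditions genuinely transfer. This hinges on the fact established in Remark \ref{rem-1} and Proposition \ref{prop1}(1) that a flat section of $J^2(T\mathbb{X})$ is uniquely recovered from its image in $T\mathbb{X}$, which makes the identification of the two solution sheaves canonical and compatible with the Lie bracket. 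I expect this to be the only step where something could go wrong; conditions (1), (2) and (3) transfer essentially by definition once Lemma \ref{lem4} is in hand.
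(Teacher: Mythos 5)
Your proposal is correct and essentially reproduces the paper's own proof: Lemma \ref{lem4} accounts for conditions (1)--(2), Proposition \ref{prop2} for conditions (3)--(4), and the bijection is the composite $\varpi\circ\Theta$, exactly as in the paper's argument. The only imprecision is in your citations for the key transfer step: Remark \ref{rem-1} and Proposition \ref{prop1} concern connections arising from projective structures, whereas the converse direction needs the dictionary between $\delta$-solutions and $\varpi(\delta)$-flat sections for an \emph{arbitrary} $\delta$ with symbol $1$ (one does not yet know that $\delta$ comes from a projective structure); this general fact does follow from the construction of $\varpi$ in Section \ref{se3.1} --- flatness for $\varpi(\delta)$ forces a section of $J^2(T{\mathbb X})$ to be the $2$-jet prolongation of its projection to $T{\mathbb X}$ and then to solve $\delta$ --- and the paper's own proof invokes it just as implicitly, so this is a shared ellipsis rather than a gap in your argument.
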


\begin{proof}
In Proposition \ref{prop1} we constructed a map from the projective structures on $\mathbb X$ to 
the holomorphic connections on $J^{2}(T{\mathbb X})$. The holomorphic connections on 
$J^{2}(T{\mathbb X})$ obtained this way satisfy all the four conditions in the statement of the 
corollary. See Proposition \ref{prop1} for conditions (3) and (4); see the proof
of Lemma \ref{lemm0} for condition (1); see the proof of Lemma \ref{lem4} for (2).

Conversely, let $\mathcal D$ be a holomorphic connection on $J^{2}(T{\mathbb X})$ 
satisfying the four conditions. In view of Lemma \ref{lem4}, from the first two conditions we 
conclude that ${\mathcal D}\,=\, \varpi(\delta)$
for some $\delta\, \in\, H^0({\mathbb 
X},\, {\rm Diff}^3_{\mathbb X}(T{\mathbb X},\, K^{\otimes 2}_{\mathbb X}))_0$.
In view of Proposition \ref{prop2}, from the third and fourth conditions we conclude that 
$\mathcal D$ corresponds to a projective structures on $\mathbb X$.
\end{proof}

\section{Projective structures and orthogonal opers}\label{sec4}

Projective structures on a Riemann surface $\mathbb X$ are precisely the $\text{PSL}(2,{\mathbb C})$--opers
on $\mathbb X$. On the other hand, we have the isomorphism $\text{PSL}(2,{\mathbb C})\,=\, \text{SO}(3,{\mathbb C})$. This
isomorphism is obtained by identifying ${\mathbb C}^3$ equipped with the standard nondegenerate symmetric form
and $\text{Sym}^2({\mathbb C}^2)$ equipped with the nondegenerate symmetric form constructed using the
standard anti-symmetric form on ${\mathbb C}^2$; this identification produces a homomorphism from
$\text{SL}(2,{\mathbb C})$ to $\text{SO}(3,{\mathbb C})$, which factors through $\text{PSL}(2,{\mathbb C})$,
producing an isomorphism of $\text{PSL}(2,{\mathbb C})$ with $\text{SO}(3,{\mathbb C})$.
Therefore, projective structures on $\mathbb X$ are
precisely the $\text{SO}(3,{\mathbb C})$--opers on $\mathbb X$. In this subsection we shall elaborate this
point of view.

A holomorphic $\text{SO}(3,{\mathbb C})$--bundle on a Riemann surface $\mathbb X$ consists of a holomorphic vector
bundle $W$ of rank three on $\mathbb X$ together with a holomorphic section $B_W\, \in\, H^0({\mathbb X},\,
\text{Sym}^2(W^*))$, such that
\begin{itemize}
\item $\bigwedge^3 W$ is identified with ${\mathcal O}_{\mathbb X}$ by a given isomorphism,

\item $B_W$ is fiberwise nondegenerate, and

\item the given isomorphism of $\bigwedge^3 W$ with ${\mathcal O}_{\mathbb X}$ takes the
bilinear form on $\bigwedge^3 W$ induced by $B_W$ to the standard constant bilinear form on
${\mathcal O}_{\mathbb X}$ (the corresponding quadratic form takes the section of ${\mathcal O}_{\mathbb X}$
given by the constant function $1$ on $\mathbb X$ to the function $1$).
\end{itemize}

A \textit{filtered} $\text{SO}(3,{\mathbb C})$--bundle on $\mathbb X$ is a holomorphic
$\text{SO}(3,{\mathbb C})$--bundle $(W,\, B_W)$ together with a filtration of holomorphic subbundles
$$
F^W_1\, \subset\, F^W_2\, \subset\, W
$$
such that
\begin{enumerate}
\item $F^W_1$ is holomorphically identified with $K_{\mathbb X}$ by a given isomorphism,

\item $B_W(F^W_1\otimes F^W_1)\, =\, 0$,

\item $F^W_2/F^W_1$ is holomorphically identified with ${\mathcal O}_{\mathbb X}$ by a given isomorphism,

\item $B_W(F^W_1\otimes F^W_2)\, =\, 0$; in other words, $(F^W_1)^\perp\, =\, F^W_2$.
\end{enumerate}

Note that the first and third conditions together imply that $W/F^W_2$ is holomorphically identified with
$\bigwedge^3 W \otimes (K_{\mathbb X})^* \,=\, T\mathbb X$.

A \textit{holomorphic connection} on a filtered $\text{SO}(3,{\mathbb C})$--bundle $(W,\, B_W,\, \{F^W_i\}_{i=1}^2)$
is a holomorphic connection $D_W$ on $W$ such that
\begin{itemize}
\item the holomorphic connection $D_W$ preserves the bilinear form $B_W$ on $W$,

\item the holomorphic connection on $\bigwedge^3 W\,=\, {\mathcal O}_{\mathbb X}$ induced by $D_W$ coincides
with the holomorphic
connection on ${\mathcal O}_{\mathbb X}$ given by the de Rham differential $d$,

\item $D_W(F^W_1)\,=\, F^W_2\otimes K_{\mathbb X}$, and

\item the second fundamental form of $F^W_1$ for $D_W$, which is a holomorphic section of
$\text{Hom}(K_{\mathbb X},\, {\mathcal O}_{\mathbb X})\otimes K_{\mathbb X}\,=\, {\mathcal O}_{\mathbb
X}$ (see \eqref{j1}), coincides with the section of ${\mathcal O}_{\mathbb
X}$ given by the constant function $1$ on $\mathbb X$.
\end{itemize}

An $\text{SO}(3,{\mathbb C})$--\textit{oper} on ${\mathbb X}$ is a filtered $\text{SO}(3,{\mathbb C})$--bundle 
$(W,\, B_W,\, \{F^W_i\}_{i=1}^2)$ equipped with a holomorphic connection $D_W$.

The above conditions imply that
the holomorphic section of $\text{Hom}({\mathcal O}_{\mathbb X},\, T{\mathbb X})\otimes K_{\mathbb X}\,=\,
{\mathcal O}_{\mathbb X}$ that gives the second fundamental form of $F^W_2$ for $D_W$ --- see \eqref{j2} --- coincides
with the one given by the constant function $1$. See \cite{BeDr} for more on
$\text{SO}(3,{\mathbb C})$--opers.

Recall from \eqref{eB} that $J^2(T{\mathbb P}(\mathbb V))$ is equipped with the bilinear form $B_0$, and
it has the filtration $\{F^{\mathbb P}_i\}_{i=1}^2$ constructed in \eqref{f1}. From Section \ref{se2.4} we conclude
that $$(J^2(T{\mathbb P}(\mathbb V)),\, B_0,\, \{F^{\mathbb P}_i\}_{i=1}^2,\, \mathbb{D}_0)$$ is an
$\text{SO}(3,{\mathbb C})$--oper on ${\mathbb P}(\mathbb V)$, where $\mathbb{D}_0$
is the holomorphic connection on $J^2(T{\mathbb P}(\mathbb V))$ constructed in \eqref{d}.
Consider the action of $\text{PGL}({\mathbb V})$ on $J^2(T{\mathbb P}(\mathbb V))$ given by the action
of $\text{PGL}({\mathbb V})$ on ${\mathbb P}(\mathbb V)$. Both $B_0$ and the filtration $\{F^{\mathbb P}_i\}_{i=1}^2$
are clearly preserved by this action of $\text{PGL}({\mathbb V})$ on $J^2(T{\mathbb P}(\mathbb V))$. As noted
in the proof of Proposition \ref{prop1}(1), the connection $\mathbb{D}_0$ is preserved by the
action of $\text{PGL}({\mathbb V})$ on $J^2(T{\mathbb P}(\mathbb V))$.

Let $(W,\, B_W,\, \{F^W_i\}_{i=1}^2,\, D_W)$ be an $\text{SO}(3,{\mathbb C})$--oper on ${\mathbb X}$.
Denote by $P(W)$ the projective bundle on $\mathbb X$ that parametrizes the lines in the fibers of $W$. Let
\begin{equation}\label{pw}
{\mathbf P}_W\, \subset\, P(W)
\end{equation}
be the ${\mathbb C}{\mathbb P}^1$--bundle on $\mathbb X$ that parametrizes the isotropic lines for $B_W$
(the lines on which the corresponding quadratic form vanishes). So the given condition
$B_W(F^W_1\otimes F^W_1)\, =\, 0$ implies that the line subbundle $F^W_1\, \subset\, W$
produces a holomorphic section
$$
s_W\, :\, {\mathbb X}\, \longrightarrow\, {\mathbf P}_W
$$
of the ${\mathbb C}{\mathbb P}^1$--bundle in \eqref{pw}.
The connection $D_W$ produces a holomorphic connection on $P(W)$, which, in turn,
induces a holomorphic connection on ${\mathbf P}_W$; this holomorphic
connection on ${\mathbf P}_W$ will be denoted by $\mathcal{H}_W$. It is straight-forward to check that
the triple $({\mathbf P}_W,\, \mathcal{H}_W,\, s_W)$ defines a projective structure on $\mathbb X$ (the
condition for the triple to define a projective structure is recalled in Section \ref{se2.1}).

Conversely, let $\mathcal P$ be a projective structure on $\mathbb X$. Take a
holomorphic coordinate atlas $\{(U_i,\, \phi_i)\}_{i\in I}$ in the equivalence class defining
$\mathcal P$. Using $\phi_i$,
\begin{itemize}
\item the bilinear form $B_0$ on $J^2(T{\mathbb P}(\mathbb V))\vert_{\phi_i(U_i)}$ constructed in
\eqref{eB} produces a bilinear form $B_{\mathcal P}(i)$ on $J^2(T{\mathbb X})\vert_{U_i}$,

\item the filtration $\{F^{\mathbb P}_j\}_{j=1}^2$ of $J^2(T{\mathbb P}(\mathbb V))\vert_{\phi_i(U_i)}$
constructed in \eqref{f1} produces a filtration $\{F^{\mathcal P}_j(i)\}_{j=1}^2$ of
$J^2(T{\mathbb X})\vert_{U_i}$, and

\item the holomorphic connection ${\mathbb D}_0\vert_{\phi_i(U_i)}$ in
\eqref{d} on $J^2(T{\mathbb P}(\mathbb V))\vert_{\phi_i(U_i)}$ produces a holomorphic connection
${\mathbb D}_{\mathcal P}(i)$ on $J^2(T{\mathbb X})\vert_{U_i}$.
\end{itemize}
Since $B_0$, $\{F^{\mathbb P}_j\}_{j=1}^2$ and ${\mathbb D}_0$ are all $\text{PGL}({\mathbb V})$--equivariant, each of
the locally defined structures $\{B_{\mathcal P}(i)\}_{i\in I}$, $\{\{F^{\mathcal P}_j(i)\}_{j=1}^2\}_{i\in I}$
and $\{{\mathbb D}_{\mathcal P}(i)\}_{i\in I}$ patch together compatibly to define
\begin{itemize}
\item a holomorphic nondegenerate symmetric bilinear form $B_{\mathcal P}$ on $J^2(T{\mathbb X})$,

\item a filtration $\{F^{\mathcal P}_j\}_{j=1}^2$ of holomorphic subbundles of $J^2(T{\mathbb X})$, and 

\item a holomorphic connection ${\mathbb D}_{\mathcal P}$ on $J^2(T{\mathbb X})$.
\end{itemize}
Since $(J^2(T{\mathbb P}(\mathbb V)),\, B_0,\, \{F^{\mathbb P}_i\}_{i=1}^2,\, \mathbb{D}_0)$ is an
$\text{SO}(3,{\mathbb C})$--oper on ${\mathbb P}(\mathbb V)$, we conclude that
$$(J^2(T{\mathbb X}),\, B_{\mathcal P},\, \{F^{\mathcal P}_j\}_{j=1}^2,\, {\mathbb D}_{\mathcal P})$$
is an $\text{SO}(3,{\mathbb C})$--oper on ${\mathbb X}$.

It is straight-forward to check that the above two constructions, namely from $\text{SO}(3,
{\mathbb C})$--opers on ${\mathbb X}$ to projective structures on ${\mathbb X}$ and vice versa, are inverses
of each other.

The above construction of an $\text{SO}(3,{\mathbb C})$--oper on ${\mathbb X}$ from $\mathcal P$ has the
following alternative description.

Let $\gamma\, :\, {\mathbf P}_{\mathcal P}\, \longrightarrow\, {\mathbb X}$ be a holomorphic ${\mathbb
C}{\mathbb P}^1$--bundle, ${\mathcal H}_{\mathcal P}$ a holomorphic connection on ${\mathbf P}_{\mathcal P}$
and $s_{\mathcal P}$ a holomorphic section of $\gamma$, such that the triple $$(\gamma,\, {\mathcal H}_{\mathcal P},
\, s_{\mathcal P})$$ gives the projective structure $\mathcal P$ (see Section \ref{se2.1}). Let
$$
W\, :=\, \gamma_* T_\gamma\, \longrightarrow\, {\mathbb X}
$$
be the direct image, where $T_\gamma$ is defined in \eqref{tga}. For any $x\, \in\, \mathbb X$, the
fiber $W_x$ is identified with $H^0(\gamma^{-1}(x), \, T(\gamma^{-1}(x)))$, so $W_x$ is a Lie
algebra isomorphic to $\mathfrak{sl}(2,{\mathbb C})$; the Lie algebra structure is given by the
Lie bracket of vector fields. Let $B_W$ denote the Killing form on $W$.

If $\mathbb V$ is a rank two holomorphic
vector bundle on $\mathbb X$ such that ${\mathbb P}({\mathbb V})\,=\, {\mathbf P}_{\mathcal P}$, then
$W\,=\, {\rm ad}({\mathbb V})\, \subset\, \text{End}({\mathbb V})$ (the subalgebra bundle of trace zero endomorphisms).
It should be clarified, that although $\mathbb V$ is not uniquely determined by ${\mathbf P}_{\mathcal P}$, any
two choices of $\mathbb V$ with ${\mathbb P}({\mathbb V})\,=\, {\mathbf P}_{\mathcal P}$ differ by tensoring
with a holomorphic line bundle on $\mathbb X$. As a consequence, $\text{End}({\mathbb V})$ and ${\rm ad}({\mathbb V})$
are uniquely determined by ${\mathbf P}_{\mathcal P}$.
The bilinear form $B_W$ coincides with the bilinear form on ${\rm ad}({\mathbb V})$ defined by the Killing bilinear form of the endomorphism algebra (defined from the trace).

Since the image $s_{\mathcal P}({\mathbb X})\, \subset\, {\mathbf P}_{\mathcal P}$
of the section $s_{\mathcal P}$ is a divisor, the
holomorphic vector bundle $W$ has a filtration of holomorphic subbundles
\begin{equation}\label{of2}
F^{\mathbb P}_1\, :=\,
\gamma_* (T_\gamma\otimes {\mathcal O}_{{\mathbf P}_{\mathcal P}}(-2s_{\mathcal P}({\mathbb X})))
\, \subset\, F^{\mathbb P}_2\, :=\,
\gamma_* (T_\gamma\otimes {\mathcal O}_{{\mathbf P}_{\mathcal P}}(-s_{\mathcal P}({\mathbb X})))
\, \subset\, \gamma_* T_\gamma\,=:\, W\, .
\end{equation}
Recall from Section \ref{se2.1} that the homomorphism $\widehat{ds_{\mathcal P}}\, :\,
T{\mathbb X}\, \longrightarrow\, (s_{\mathcal P})^*T_\gamma$ (see \eqref{dc2}) is an 
isomorphism. Therefore, $\widehat{ds_{\mathcal P}}$ gives a holomorphic isomorphism between $T\mathbb X$ and 
the normal bundle ${\mathbf N}\,=\, N_{s_{\mathcal P}({\mathbb X})}$ of the divisor $s_{\mathcal 
P}({\mathbb X})\,\subset\, {\mathbf P}_{\mathcal P}$. On the other hand, by the Poincar\'e 
adjunction formula, ${\mathbf N}^*$ is identified with the restriction ${\mathcal O}_{{\mathbf 
P}_{\mathcal P}}(-s_{\mathcal P}({\mathbb X}))\vert_{s_{\mathcal P}({\mathbb X})}$ of the 
holomorphic line bundle ${\mathcal O}_{{\mathbf P}_{\mathcal P}}(-s_{\mathcal P}({\mathbb X}))$ 
to the divisor $s_{\mathcal P}({\mathbb X})$; see \cite[p.~146]{GH} for the Poincar\'e adjunction 
formula. Therefore, the pulled back holomorphic line bundle
\begin{equation}\label{lb}
(s_{\mathcal P})^*(T_\gamma\otimes
{\mathcal O}_{{\mathbf P}_{\mathcal P}}(-2s_{\mathcal P}({\mathbb X})))
\end{equation}
is identified with $T\mathbb X\otimes K^{\otimes 2}_{\mathbb X}\,=\, K_{\mathbb X}$.

Since the line bundle in \eqref{lb} is canonically identified with $F^{\mathbb P}_1$ in \eqref{of2},
we conclude that $F^{\mathbb P}_1$ is identified with $K_{\mathbb X}$.

The quotient line bundle $F^{\mathbb P}_2/F^{\mathbb P}_1$ in \eqref{of2} is identified with
$$
(s_{\mathcal P})^*(T_\gamma\otimes
{\mathcal O}_{{\mathbf P}_{\mathcal P}}(-s_{\mathcal P}({\mathbb X})))\, .
$$
Since $(s_{\mathcal P})^*({\mathcal O}_{{\mathbf P}_{\mathcal P}}(-s_{\mathcal P}({\mathbb X})))$
is identified with $K_{\mathbb X}$, it follows that $F^{\mathbb P}_2/F^{\mathbb P}_1$ is
identified with ${\mathcal O}_{\mathbb X}$.

For any $x\, \in\, \mathbb X$, the subspace $(F^{\mathbb P}_1)_x\, \subset\, W_x$ is a nilpotent subalgebra, and
$(F^{\mathbb P}_2)_x\, \subset\, W_x$ is the unique Borel subalgebra containing $(F^{\mathbb P}_1)_x$.
Hence we have $B_W(F^{\mathbb P}_1\otimes F^{\mathbb P}_1)\,=\, 0$ and
$(F^{\mathbb P}_1)^\perp \,=\, F^{\mathbb P}_2$.

Consequently, $(W,\, B_W,\, \{F^{\mathbb P}_i\}_{i=1}^2)$ is a filtered $\text{SO}(3,{\mathbb 
C})$--bundle on $\mathbb X$. The holomorphic connection ${\mathcal H}_{\mathcal P}$ on ${\mathbf 
P}_{\mathcal P}$ produces a holomorphic connection
\begin{equation}\label{j-1}
{\mathbb D}_{\mathcal P}
\end{equation}
on $W$. Indeed,
from the fact that $\text{Aut}({\mathbb C}{\mathbb P}^1)\,=\, \text{PSL}(2, {\mathbb C})$ it follows
that the ${\mathbb C}{\mathbb P}^1$--bundle ${\mathbf P}_{\mathcal P}$ gives a holomorphic principal
$\text{PSL}(2, {\mathbb C})$--bundle ${\mathbf P}_{\mathcal P}(\text{PSL}(2, {\mathbb C}))$
on $\mathbb X$, and ${\mathcal H}_{\mathcal P}$ produces a
holomorphic connection on ${\mathbf P}_{\mathcal P}(\text{PSL}(2, {\mathbb C}))$. This
holomorphic connection on ${\mathbf P}_{\mathcal P}(\text{PSL}(2, {\mathbb C}))$ produces a
holomorphic connection on the vector bundle $W$ associated to ${\mathbf P}_{\mathcal P}(\text{PSL}(2, {\mathbb C}))$
for the adjoint action of $\text{PSL}(2, {\mathbb C})$ on its Lie algebra
$\mathfrak{sl}(2,{\mathbb C})$; this connection on $W$ is denoted by ${\mathbb D}_{\mathcal P}$.

The connection ${\mathbb D}_{\mathcal P}$ in \eqref{j-1} is in fact a 
holomorphic connection on the filtered $\text{SO}(3,{\mathbb C})$--bundle $(W,\, B_W,\, 
\{F^{\mathbb P}_i\}_{i=1}^2)$. The resulting $\text{SO}(3,{\mathbb C})$--oper $(W,\, B_W,\, \{F^{\mathbb 
P}_i\}_{i=1}^2,\, {\mathbb D}_{\mathcal P})$ coincides with the one constructed earlier from $\mathcal 
P$.

\section{Branched projective structures and logarithmic connections}\label{se5}

\subsection{Branched projective structure}\label{se5.1}

Let $X$ be a connected Riemann surface. Fix a nonempty finite subset
\begin{equation}\label{e7}
{\mathbb S}\, :=\, \{x_1, \, \cdots ,\, x_d\}\, \subset\, X
\end{equation}
of $d$ distinct points. For each $x_i$, $1\, \leq\, i\, \leq\, d$, fix an integer $n_i\, \geq\, 1$.
Let
\begin{equation}\label{e0n}
S\, :=\, \sum_{i=1}^d n_i\cdot x_i
\end{equation}
be the effective divisor on $X$.

The group of all holomorphic automorphisms of ${\mathbb C}{\mathbb P}^1$ is the M\"obius group
$\text{PGL}(2,{\mathbb C})$. Any
$$
\begin{pmatrix}
a & b\\
c& d
\end{pmatrix} \, \in\, \text{SL}(2,{\mathbb C})
$$
acts on ${\mathbb C}{\mathbb P}^1\,=\, {\mathbb C}\cup\{\infty\}$ as
$z\, \longmapsto\, \frac{az+b}{cz+d}$; the center ${\mathbb Z}/2\mathbb Z$ of
$\text{SL}(2,{\mathbb C})$ acts trivially,
thus producing an action of $\text{PGL}(2,{\mathbb C})\,=\, \text{SL}(2,{\mathbb C})/({\mathbb Z}
/2\mathbb Z)$ on ${\mathbb C}{\mathbb P}^1$.

A branched projective structure on $X$ with branching type $S$ (defined in \eqref{e0n})
is given by data
\begin{equation}\label{de1}
\{(U_j,\, \phi_j)\}_{j\in J}\, ,
\end{equation}
where
\begin{enumerate}
\item $U_j\, \subset\, X$ is a connected open subset with $\# (U_j\bigcap S_0)\, \leq\, 1$ such that
$\bigcup_{j\in J} U_j\,=\, X$,

\item $\phi_j\, :\, U_j\,\longrightarrow\, {\mathbb C}{\mathbb P}^1$ is a holomorphic map
which is an immersion on the complement $U_j\setminus (U_j\bigcap S_0)$,

\item if $U_j\bigcap S_0\,=\, x_i$, then $\phi_j$ is of degree $n_i+1$ and totally ramified at $x_i$,
while $\phi_j$ is an embedding if $U_j\bigcap S_0\,=\, \emptyset$, and

\item for every $j,\, j'\, \in\, J$ with $U_j\bigcap U_{j'}\,\not=\, \emptyset$, and
every connected component $U\, \subset\, U_j\bigcap U_{j'}$, there is an
element $f^U_{j,j'}\, \in \, \text{PGL}(2,{\mathbb C})$, such that $\phi_{j}\, =\, f^U_{j,j'}\circ
\phi_{j'}$ on $U$.
\end{enumerate}

Two such data $\{(U_j,\, \phi_j)\}_{j\in J}$ and $\{(U'_j,\, \phi'_j)\}_{j\in J'}$ satisfying the above
conditions are called \textit{equivalent} if their union $\{(U_j,\, \phi_j)\}_{j\in J}
\bigcup \{(U'_j,\, \phi'_j)\}_{j\in J'}$ also satisfies the above
conditions.

A \textit{branched projective structure} on $X$ with branching type $S$ is an equivalence
class of data $\{(U_j,\, \phi_j)\}_{j\in J}$ satisfying the above conditions. This definition
was introduced in \cite{Ma1}, \cite{Ma2} (see also \cite{BD} for the more general notion of a branched Cartan geometry).

We will now describe an equivalent formulation of the definition of a branched projective structure.

Consider a triple $(\gamma,\, {\mathcal H},\, s)$, where
\begin{itemize}
\item $\gamma\, :\, {\mathbf P}\, \longrightarrow\, \mathbb X$ is a holomorphic ${\mathbb C}{\mathbb P}^1$--bundle,

\item ${\mathcal H}\, \subset\, T{\mathbf P}$ is a holomorphic connection (as before,
$T{\mathbf P}$ is the holomorphic tangent bundle and ${\mathcal H}\oplus \text{kernel}(d\gamma)
\,=\, T{\mathbf P}$), and

\item $s\, :\, {\mathbb X}\, \longrightarrow\,{\mathbf P}$ is a holomorphic section of
$\gamma$,
\end{itemize}
such that the divisor for the homomorphism $\widehat{ds}$ in \eqref{dc2} coincides with $S$
in \eqref{e0n}.
This triple $(\gamma,\, {\mathcal H},\, s)$ gives a branched projective structure on $\mathbb X$
with branching type $S$.

Two such triples $(\gamma_1,\, {\mathcal H}_1,\, s_1)$ and $(\gamma_2,\, {\mathcal H}_2,\, s_2)$ are
called equivalent if there is a holomorphic isomorphism
$$
\mathbb{I}\, :\, {\mathbf P}_1\, \longrightarrow\, {\mathbf P}_2
$$
such that
\begin{itemize}
\item $\gamma_1\,=\, \gamma_2\circ\mathbb{I}$,

\item $d\mathbb{I} ({\mathcal H}_1)\,=\, \mathbb{I}^*{\mathcal H}_2$, where $d\mathbb{I}\, :\,
T{\mathbf P}_1\, \longrightarrow\, \mathbb{I}^*T{\mathbf P}_2$ is the differential of the map
$\mathbb{I}$, and

\item $\mathbb{I}\circ s_1\,=\, s_2$.
\end{itemize}

Two equivalent triples produce the same branched projective structure on $\mathbb X$. More precisely,
this map from the equivalence classes of triples to the 
branched projective structures on $\mathbb X$ with branching type $S$ is both injective and surjective.
More details on this can be found in \cite[Section 2.1]{BDG}.

For the convenience of the exposition, we would assume that we are in the generic situation where the
divisor $S$ in \eqref{e0n} is reduced. In other words, $n_i\,=\, 1$ for all $1\, \leq\, i\,\leq\, d$,
and 
\begin{equation}\label{a1}
S\, =\, \sum_{i=1}^d x_i\, .
\end{equation}

\subsection{Logarithmic connections}\label{se5.2}

Let $Y$ be a connected Riemann surface and ${\mathcal S}\, =\, \sum_{i=1}^m y_i$ be
an effective divisor on $Y$, where $y_1,\, \cdots,\, y_m$ are $m$ distinct points of $Y$.
The holomorphic cotangent bundle of $Y$ will
be denoted by $K_Y$. For any point $y\, \in\, \mathcal S$, the
fiber $(K_Y\otimes {\mathcal O}_Y({\mathcal S}))_y$ is identified with $\mathbb C$ by sending any
meromorphic $1$-form defined around
$y$ to its residue at $y$. In other words, for any holomorphic coordinate function
$z$ on $Y$ defined around the point $y$, with $z(y)\,=\, 0$, consider the isomorphism
\begin{equation}\label{ry}
R_y\, :\, (K_Y\otimes {\mathcal O}_Y({\mathcal S}))_y\, \longrightarrow\, {\mathbb C}\, ,
\ \ c\cdot \frac{dz}{z} \, \longmapsto\, c\, .
\end{equation}
This isomorphism is in fact independent of the choice of the above coordinate function $z$.

Let $W$ be a holomorphic vector bundle on $Y$.

A \textit{logarithmic connection} on $W$ singular over $\mathcal S$ is a holomorphic
differential operator of order one
$$
D\, :\, W\, \longrightarrow\, W\otimes K_Y\otimes {\mathcal O}_Y({\mathcal S})
$$
such that $D(fs) \,=\, f D(s) + s\otimes df$ for all locally defined holomorphic function $f$
on $Y$ and all locally defined holomorphic section $s$ of $W$. This means that
$$
D\, \in\, H^0(Y,\,\text{Diff}^1_Y(W,\, W\otimes K_Y\otimes {\mathcal O}_Y({\mathcal S})))
$$
such that the symbol of $D$
is the holomorphic section of $\text{End}(W)\otimes{\mathcal O}_Y({\mathcal S})$ given
by $\text{Id}_W\otimes 1$, where $1$ is the constant function $1$ on $Y$.

Note that a logarithmic connection $D$ defines a holomorphic connection on the restriction of $W$ 
to the complement $Y\setminus \mathcal S$. The logarithmic connection $D$ is called an extension
of this holomorphic connection on $W\vert_{Y\setminus S}$.

For a logarithmic connection $D_0$ on $W$ singular over $\mathcal S$, and a point $y\, \in\,
{\mathcal S}$, consider the composition of homomorphisms
$$
W\, \stackrel{D_0}{\longrightarrow}\, W\otimes K_Y\otimes {\mathcal O}_Y({\mathcal S})
\, \stackrel{\text{Id}_W\otimes
R_y}{\longrightarrow}\, W_y\otimes{\mathbb C}\,=\, W_y\, ,
$$
where $R_y$ is the homomorphism in \eqref{ry}. This composition of homomorphism vanishes
on the subsheaf $W\otimes {\mathcal O}_Y(-y)\, \subset\, W$, and hence it produces a homomorphism
$$
\text{Res}(D_0,y)\, :\, W/(W\otimes {\mathcal O}_Y(-y))\,=\, W_y\, \longrightarrow\, W_y\, .
$$
This endomorphism $\text{Res}(D_0,y)$ of $W_y$ is called the \textit{residue} of the
logarithmic connection $D_0$ at the point $y$; see \cite[p.~53]{De}.

The notion of second fundamental form in Section \ref{se2.4} extends to the set-up of logarithmic
connections.

Let $W$ be a holomorphic vector bundle on $Y$ equipped with a logarithmic connection
$D$ singular over $\mathcal S$. Let $W'\,\subset\, W$ be a holomorphic subbundle.
Then the composition of homomorphisms
$$
W\, \stackrel{D}{\longrightarrow}\, W\otimes K_Y\otimes {\mathcal O}_Y({\mathcal S})\,
\stackrel{q_{W'}\otimes{\rm Id}}{\longrightarrow}\,
(W/W')\otimes K_Y\otimes{\mathcal O}_Y({\mathcal S})\, ,
$$
where $q_{W'}\, :\, W\,\longrightarrow\,W/W'$ is the natural quotient map, defines a holomorphic 
section of $\text{Hom}(W',\, (W/W'))\otimes K_Y\otimes{\mathcal O}_Y({\mathcal S})$, which is called the
\textit{second fundamental form} of 
$W'$ for $D$. If $W''\, \subset\, W$ is a holomorphic subbundle containing $W'$ such that
$D(W')\, \subset\, W''\otimes K_Y\otimes{\mathcal O}_Y({\mathcal S})$, then the second 
fundamental form of $W'$ for $D$ is given by a section
\begin{equation}\label{j1n}
\zeta_1\, \in\,
H^0(Y,\, \text{Hom}(W',\, W''/W')\otimes K_Y\otimes{\mathcal O}_Y({\mathcal S}))
\end{equation}
using the natural inclusion map
$$
H^0(Y,\,\text{Hom}(W',\,W''/W')\otimes K_Y\otimes{\mathcal O}_Y({\mathcal S}))\,\hookrightarrow
\, H^0(Y,\,\text{Hom}(W',\,W/W')\otimes K_Y\otimes{\mathcal O}_Y({\mathcal S}))\, .$$
The second fundamental form of this subbundle $W''$ for $D$ is given by a section
\begin{equation}\label{j2n}
\zeta_2\, \in\,
H^0(Y,\,\text{Hom}(W''/W',\, W/W'')\otimes K_Y\otimes{\mathcal O}_Y({\mathcal S}))
\end{equation}
through the natural inclusion map 
$$H^0(Y,\,\text{Hom}(W''/W',\, W/W'')\otimes K_Y\otimes{\mathcal O}_Y({\mathcal S}))\, \hookrightarrow\,
H^0(Y,\,\text{Hom}(W'',\, W/W'')\otimes K_Y\otimes{\mathcal O}_Y({\mathcal S}))\, .$$

\subsection{Logarithmic connection from a branched projective structure}\label{5.3}

We now use the notation of Section \ref{se5.1}.

Let
\begin{equation}\label{e8}
{\mathbb T}\, :=\, (TX)\otimes {\mathcal O}_X(S)
\end{equation}
be the holomorphic line bundle on $X$, where $S$ is the divisor in \eqref{a1}. From the isomorphism
in \eqref{ry} it follows that the fiber ${\mathcal O}_X(S)_y$ is identified with $T_yX$ for every
$y\,\in\, \mathbb S$ in \eqref{e7}. For any point $y\, \in\, \mathbb S$, let
\begin{equation}\label{e9}
F^1_y\,:=\, (TX)\otimes {\mathcal O}_X(S)\otimes (T^*X)^{\otimes 2})_y\,=\,
{\mathbb C}\, \subset\, F^2_y\, \subset\, J^2({\mathbb T})_y
\end{equation}
be the filtration of subspaces of the fiber $J^2({\mathbb T})_y$ over $y$ constructed as in \eqref{ef};
more precisely, $F^1_y$ is the kernel of the projection $\alpha_y\, :\, J^2({\mathbb T})_y\,
\longrightarrow\, J^1({\mathbb T})_y$ (see \eqref{e1}), and $$F^2_y\,=\, \alpha^{-1}_y({\mathbb T}_y\otimes
(K_X)_y) \,=\, \alpha^{-1}_y({\mathcal O}(S)_y)$$
(see \eqref{e1}). In other words, $F^2_y$ is the kernel of the composition of homomorphisms
$$
J^2({\mathbb T})_y\, \stackrel{\alpha_y}{\longrightarrow}\, J^1({\mathbb T})_y\,
\longrightarrow\, {\mathbb T}_y
$$
(see \eqref{e1}). Note that $(TX)\otimes{\mathcal O}_X(S)\otimes (T^*X)^{\otimes 2})_y\,=\,
{\mathbb C}$, because ${\mathcal O}_X(S)_y\,=\, T_yX$.

The complement $X\setminus {\mathbb S}$ will be denoted by $\mathbb X$, where $\mathbb S$
is the subset in \eqref{e7}.

Let $P$ be a branched projective structure on $X$ of branching type $S$, where $S$ is the divisor 
in \eqref{a1}. So $P$ gives a projective structure on $\mathbb X\,:=\, X\setminus {\mathbb S}$; 
this projective structure on $\mathbb X$ will be denoted by $\mathcal P$. As shown in Proposition 
\ref{prop1}(1), the projective structure $\mathcal P$ produces a holomorphic connection ${\mathbb 
D} ({\mathcal P})$ on the vector bundle $J^2(T{\mathbb X})$.

\begin{proposition}\label{prop3}
The above holomorphic connection ${\mathbb D} ({\mathcal P})$ on $J^2(T{\mathbb X})$ extends to
a logarithmic connection ${\mathbb D} (P)$ on $J^2({\mathbb T})$, where $\mathbb T$ is
the holomorphic line bundle in \eqref{e8}.

For any $x_i\, \in\, \mathbb S$ in \eqref{e7}, the eigenvalues of the residue
${\rm Res}({\mathbb D} (P),x_i)$ are $\{-2,\, -1,\, 0\}$.

The eigenspace for the eigenvalue $-2$ of ${\rm Res}({\mathbb D} (P),x_i)$
is the line $F^1_{x_i}$ in \eqref{e9}. The eigenspace for the eigenvalue $-1$ of
${\rm Res}({\mathbb D} (P),x_i)$ is contained in the subspace $F^2_{x_i}$ in \eqref{e9}.
\end{proposition}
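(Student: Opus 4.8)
The plan is to reduce everything to a local computation in a neighbourhood of each branch point $x_i$, since the connection $\mathbb{D}({\mathcal P})$ is already given on $J^2(T{\mathbb X})$ and, over $\mathbb X$, there is a canonical identification $J^2({\mathbb T})\vert_{\mathbb X}\,=\,J^2(T{\mathbb X})$ coming from the trivialisation of ${\mathcal O}_X(S)$ away from $S$. Thus the only issue is the behaviour of $\mathbb{D}({\mathcal P})$ across each $x_i$, and both the logarithmic nature of the extension and the residue are local invariants. Fix a holomorphic coordinate $z$ centred at $x_i$. Since $S$ is reduced (see \eqref{a1}), the branched chart of $P$ at $x_i$ is totally ramified of degree two, so after a M\"obius change of the target coordinate $w$ I may take the developing map to be $\phi_i(z)\,=\,z^2$.

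On the punctured disc the chart $\phi_i$ is an immersion, and by the construction in Proposition \ref{prop1} the connection $\mathbb{D}({\mathcal P})$ there is the pullback $\phi_i^*\mathbb{D}_0$ of the standard connection \eqref{d}. Hence, by Remark \ref{rem-1} and Remark \ref{rem1}, its flat sections are exactly the $2$-jets of the pulled-back global vector fields $\phi_i^*(w^k\partial_w)$, $k\,=\,0,1,2$, which span $\mathfrak{sl}(\mathbb V)$. First I would compute these pulled-back fields explicitly and re-express them as local sections of ${\mathbb T}\,=\,(TX)\otimes {\mathcal O}_X(S)$ (see \eqref{e8}), using the frame of $\mathbb T$ determined by the residue isomorphism \eqref{ry}, i.e. the identification ${\mathcal O}_X(S)_{x_i}\,=\,T_{x_i}X$ recorded before \eqref{e9}. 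Taking $2$-jets then yields three flat holomorphic sections of $J^2({\mathbb T})$ near $x_i$, and the decisive quantitative input is the order to which each vanishes along $S$. Next I would choose a holomorphic frame of $J^2({\mathbb T})$ adapted to the filtration $F^1_{x_i}\subset F^2_{x_i}\subset J^2({\mathbb T})_{x_i}$ of \eqref{e9}, write the flat frame as the columns of a matrix $M$, and read off the connection form as $A\,=\,-(dM)M^{-1}$. Showing that $A$ has a simple (logarithmic) pole produces $\mathbb{D}(P)$, and its residue is the $\tfrac{dz}{z}$-coefficient of $A$.

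The eigenvalue statement should then follow from the leading-order behaviour of the three flat sections relative to the filtration: the deepest step $F^1_{x_i}$ carries the most degenerate (highest-order-vanishing) flat direction, hence the eigenvalue $-2$; the intermediate quotient contributes the eigenvalue $-1$, with eigenvector lying in $F^2_{x_i}$; and the top quotient $J^2({\mathbb T})_{x_i}/F^2_{x_i}$ carries the eigenvalue $0$. The compatibility of the residue with the filtration, and the fact that it is upper-triangular with respect to it, should be inherited from the oper behaviour of $\mathbb{D}_0$ established in Section \ref{se2.4}, transported through $\phi_i$.

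I expect the main obstacle to be precisely the verification that the extension is logarithmic, together with the exact residue computation. The difficulty is structural: the degree-two ramification $w\,=\,z^2$ doubles jet orders, so the three flat sections naively acquire vanishing orders $0,2,4$ rather than the $0,1,2$ that a logarithmic residue with spectrum $\{-2,-1,0\}$ demands, and a carelessly chosen frame of $J^2({\mathbb T})$ can make the connection matrix appear to have a pole of order greater than one. Reconciling this requires choosing the frame adapted simultaneously to the filtration and to the ${\mathcal O}_X(S)$-twist --- equivalently, performing the correct elementary modification along $S$ --- so that the logarithmic normal form becomes visible and the spectrum collapses to $\{-2,-1,0\}$. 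As an independent check I would recompute the residue from the ${\rm SO}(3,{\mathbb C})$--oper/section description of Section \ref{sec4}: there $\gamma_*T_\gamma$ carries the nonsingular adjoint connection while the branched section is tangent to the horizontal distribution exactly along $S$, so the interaction of this tangency with the filtration \eqref{of2} ought to reproduce the same residue data on $J^2({\mathbb T})$.
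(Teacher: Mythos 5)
Your primary route is genuinely different from the paper's. The paper never computes a connection matrix: it first proves general facts about when a connection passes across a sheaf inclusion $0\to V\to{\mathcal W}\to i_*Q\to 0$ as in \eqref{ed} and how the residues on the two sides determine each other, and then, near $x_i$, it realizes the germ of $P$ as the pullback of an \emph{unbranched} projective structure $P_1$ on a quotient disk under the local degree-two covering $\gamma$ in \eqref{e10}; the connection $\gamma^*{\mathbb D}(P_1)$ is holomorphic on $\gamma^*J^2(T{\mathbb D}_1)$, and it is transported into $J^2({\mathbb T})$ through the homomorphism \eqref{p1}, the pole order and residue being read off from those general lemmas. (Your closing ``independent check'' via $\gamma_* T_\gamma$ is not independent of the paper either: it is essentially the construction of Section \ref{se6.3}.) Your identification of the flat sections is correct: they are the $2$-jets of $\phi_i^*(w^k\partial_w)$, which, as sections of ${\mathbb T}$ written in a local frame $e$ of ${\mathbb T}$ at $x_i$, are (up to constant factors) $e$, $z^2e$, $z^4e$; and in principle the computation $A=-(dM)M^{-1}$ is a legitimate, more explicit way to get at both the extension claim and the residue claim.

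The genuine gap is in how you propose to handle your ``main obstacle'', and it is fatal to the proposal as written. The pole order of a connection with respect to a \emph{fixed} extension of the bundle across $x_i$ --- here the jet bundle $J^2({\mathbb T})$, glued to $J^2(T{\mathbb X})$ over ${\mathbb X}$ by the canonical identification $T{\mathbb X}={\mathbb T}\vert_{\mathbb X}$ --- is independent of the choice of holomorphic frame: a holomorphic gauge change replaces $A$ by $G^{-1}AG+G^{-1}dG$, which never lowers the order of the polar part. So no frame ``adapted simultaneously to the filtration and to the ${\mathcal O}_X(S)$-twist'' can exhibit a simple pole if some holomorphic frame of $J^2({\mathbb T})$ exhibits a double one; and your fallback, ``performing the correct elementary modification along $S$'', changes the lattice itself, i.e.\ replaces $J^2({\mathbb T})$ by a different sheaf, whereas the proposition is precisely the assertion that the jet bundle $J^2({\mathbb T})$ with its filtration \eqref{e9} is the correct lattice; that move assumes exactly what is to be proved. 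Finishing the computation you outline shows the obstacle is substantive, not presentational: in the frame $u_1=j^2(e)$, $u_2=j^2(ze)$, $u_3=j^2(z^2e)$ of $J^2({\mathbb T})$, the three flat sections are $u_1$, $u_3$ and $3z^4u_1-8z^3u_2+6z^2u_3$, so after prolongation their vanishing orders are $0,0,2$ (not your $0,2,4$, and not the orders $0,1,2$ with independent leading directions that a logarithmic lattice with residue spectrum $\{0,-1,-2\}$ forces --- note the last two sections have the \emph{same} leading direction $F^1_{x_i}$), and solving $\nabla(\mathrm{flat})=0$ gives $\nabla u_2=\bigl(\tfrac32u_1-\tfrac3z u_2+\tfrac3{2z^2}u_3\bigr)dz$, whose $u_3$-component has a second-order pole that, by the gauge-invariance above, no change of frame removes. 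Any complete argument must therefore carry out at $x_i$ precisely the lattice bookkeeping your proposal defers: which extension across $S$ the transported connection is actually logarithmic on, and how that extension and its residue eigenspaces are matched with $J^2({\mathbb T})$ and the filtration \eqref{e9}. This is also where the paper's own proof concentrates its effort --- the quotient of the map \eqref{p1} at $x_i$ has length three rather than one, so the extension criterion for \eqref{ed} has to be applied along the full chain of intermediate sheaves and its hypothesis verified at each step --- and it is the step on which the stated residue normal form stands or falls; a proof that leaves it to an unspecified choice of frame or modification has not proved the statement.
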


\begin{proof}
This proposition follows from some general properties of logarithmic connections which
we shall explain first.

Let $Y$ be a Riemann surface and $y_0\, \in\, Y$ a point; let $\iota\, :\, y_0\,
\hookrightarrow \, Y$ be the inclusion map. Take a holomorphic vector bundle ${\mathcal W}$ on $Y$, and
let ${\mathcal W}_{y_0}\, \longrightarrow\, Q\, \longrightarrow\, 0$ be a quotient of the fiber of ${\mathcal W}$
over the point $y_0$. Let
\begin{equation}\label{ed}
0\, \longrightarrow\, V\, \stackrel{\beta}{\longrightarrow}\,{\mathcal W} \, \longrightarrow\,
{\mathcal Q}\, :=\, i_*Q\,\longrightarrow\, 0
\end{equation}
be a short exact sequence of coherent analytic sheaves on $Y$;
so the sheaf $\mathcal Q$ is supported on the reduced point $y_0$. Let
\begin{equation}\label{ed1}
0\, \longrightarrow\, \text{kernel}(\beta(y_0))\, \longrightarrow\, V_{y_0}
\, \stackrel{\beta(y_0)}{\longrightarrow}\, {\mathcal W}_{y_0} \, \longrightarrow\,
\text{cokernel}(\beta(y_0))\,=\, Q \, \longrightarrow\, 0
\end{equation}
be the exact sequence of vector spaces obtained by restricting the exact sequence in
\eqref{ed} to the point $y_0$. It can be shown that there is a canonical isomorphism
\begin{equation}\label{ed2}
\text{kernel}(\beta(y_0))\, \stackrel{\sim}{\longrightarrow}\, Q\otimes (T_{y_0}Y)^*\, .
\end{equation}
To prove this, take any $v\, \in\, \text{kernel}(\beta(y_0))$, and let $\widetilde v$ be
a holomorphic section of $V$ defined around $y_0$ such that ${\widetilde v}(y_0)\,=\, v$.
The locally defined section $\beta({\widetilde v})$ of ${\mathcal W}$ vanishes at $y_0$, so its $1$-jet at $y_0$
gives an element of ${\mathcal W}_{y_0}\otimes (T_{y_0}Y)^*$; this element of
${\mathcal W}_{y_0}\otimes (T_{y_0}Y)^*$ will be denoted by $v_1$. Let $v'\, \in\, Q\otimes (T_{y_0}Y)^*$
denote the image of $v_1$ under the homomorphism $\beta(y_0)\times \text{Id}_{(T_{y_0}Y)^*}$. It is
straightforward to check that $v'$ does not depend on the choice of the above extension $\widetilde v$
of $v$. Consequently, we get a homomorphism as in \eqref{ed2}. This homomorphism is in fact an isomorphism.

Let $\nabla^{\mathcal W}$ be a logarithmic connection on ${\mathcal W}$ singular over $y_0$. Then
$\nabla^{\mathcal W}$ induces
a logarithmic connection on the subsheaf $V$ in \eqref{ed} if and only if the residue
$\text{Res}(\nabla^{\mathcal W},y_0)\, \in\, \text{End}({\mathcal W}_{y_0})$ preserves the subspace
$\beta(y_0)(V_{y_0})\, \subset\, {\mathcal W}_{y_0}$ in \eqref{ed1}.

Now assume that $\nabla^{\mathcal W}$ induces
a logarithmic connection $\nabla^1$ on $V$. Then $\text{Res}(\nabla^1,y_0)$ preserves the
subspace $\text{kernel}(\beta(y_0)) \, \subset\, V_{y_0}$ in \eqref{ed1}, and the
endomorphism of $$V_{y_0}/\text{kernel}(\beta(y_0))\,=\, \beta(y_0)(V_{y_0})$$ induced
by $\text{Res}(\nabla^1,y_0)$ coincides with the restriction of
$\text{Res}(\nabla^{\mathcal W},y_0)$ to $\beta(y_0)(V_{y_0})$. Let $\text{Res}(\nabla^{\mathcal W},y_0)_Q\ \in\,
\text{End}(Q)$ be the endomorphism induced by $\text{Res}(\nabla^{\mathcal W},y_0)$.
The restriction of
$\text{Res}(\nabla^1,y_0)$ to $\text{kernel}(\beta(y_0)) \, \subset\, V_{y_0}$ coincides
with $\text{Id}+\text{Res}(\nabla^{\mathcal W},y_0)_Q$; from \eqref{ed2} it follows that
$$\text{End}(\text{kernel}(\beta(y_0)))\,=\, \text{End}(Q)\, ,$$
so $\text{Res}(\nabla^{\mathcal W},y_0)_Q$ gives an endomorphism of $\text{kernel}(\beta(y_0))$.

Conversely, let $\nabla^V$ be a logarithmic connection on $V$ singular over $y_0$. Then $\nabla^V$ induces
a logarithmic connection on the holomorphic vector bundle $\mathcal W$ in \eqref{ed} if and only if the residue
$\text{Res}(\nabla^V,y_0)\, \in\, \text{End}(V_{y_0})$ preserves the subspace
$\text{kernel}(\beta(y_0)) \, \subset\, V_{y_0}$ in \eqref{ed1}.

Now assume that $\nabla^V$ induces a logarithmic connection $\nabla'$ on ${\mathcal W}$. Then $\nabla'$
gives the logarithmic connection $\nabla^V$ on $V$. Consequently, the residues of
$\nabla^V$ and $\nabla'$ are related in the fashion described above.

Take a point $x_i\, \in\, {\mathbb S}$ (see \eqref{e7}). Let $x_i\, \in\, U\, \subset\, X$ be a
sufficiently small contractible open neighborhood of $x_i$ in $X$, in particular,
$U\bigcap {\mathbb S}\,=\, x_i$. Let
$$
{\mathbb D}_1\, :=\, \{z\, \in\, {\mathbb C}\, \mid\, |z| \, <\, 1\}\, \subset\, \mathbb C
$$
be the unit disk. Fix a biholomorphism
$$
\widetilde{\gamma}\, :\, {\mathbb D}_1\, \longrightarrow\, U
$$
such that $\widetilde{\gamma}(0)\,=\, x_i$. Let
\begin{equation}\label{e10}
\gamma\, :\, U\, \longrightarrow\, {\mathbb D}_1\, ,\ \ x\, \longmapsto\,
(\widetilde{\gamma}^{-1}(x))^2
\end{equation}
be the branched covering map. Then using $\gamma$, the branched projective structure on $U$, given by the
branched projective structure $P$ on $X$ of branching type $S$, produces an usual (unbranched) projective
structure on ${\mathbb D}_1$; this projective structure on ${\mathbb D}_1$ will be denoted by $P_1$.

Now substituting $({\mathbb D}_1,\, P_1)$ in place of $({\mathbb X},\, {\mathcal P})$ in 
Proposition \ref{prop1}(1), we get a holomorphic connection ${\mathbb D}(P_1)$ on $J^2(T{\mathbb 
D}_1)$. The holomorphic vector
bundle $\gamma^*J^2(T{\mathbb D}_1)$ over $U$, where $\gamma$ is the map in \eqref{e10},
is equipped with the holomorphic connection $\gamma^*{\mathbb D}(P_1)$.

The differential $d\gamma\, :\, TU\, \longrightarrow\, \gamma^*T{\mathbb D}_1$ induces
an isomorphism ${\mathbb T}\vert_U\, \stackrel{\sim}{\longrightarrow}\, \gamma^*T{\mathbb D}_1$, where
${\mathbb T}$ is the line bundle in \eqref{e8}. This, in turn, produces an isomorphism
\begin{equation}\label{is}
J^2({\mathbb T})\vert_U\, \stackrel{\sim}{\longrightarrow}\, J^2(\gamma^*T{\mathbb D}_1)\, .
\end{equation}
On the other hand, there is a natural homomorphism $\gamma^*J^2(T{\mathbb D}_1)
\, \longrightarrow\, J^2(\gamma^*T{\mathbb D}_1)$. Combining this with the inverse of the
isomorphism in \eqref{is} we get a homomorphism
\begin{equation}\label{p1}
\gamma^*J^2(T{\mathbb D}_1) \, \longrightarrow\, J^2({\mathbb T})\vert_U\, .
\end{equation}
This homomorphism is an isomorphism over $U\setminus\{x_i\}$.

Apply the above mentioned criterion, for the extension of a logarithmic connection on
the vector bundle $V$ in \eqref{ed} to a logarithmic connection on
the vector bundle ${\mathcal W}$, to the homomorphism in \eqref{p1} and
the holomorphic connection
$\gamma^*{\mathbb D}(P_1)$ on $\gamma^*J^2(T{\mathbb D}_1)$. We conclude that
$\gamma^*{\mathbb D}(P_1)$ induces a logarithmic connection on
$J^2({\mathbb T})\vert_U$.
The first statement of the proposition follows from this.

The other two statements of the proposition follow from the earlier mentioned properties
of residues of the logarithmic connections on the vector bundles $V$ and $\mathcal W$ in \eqref{ed} that
are induced by each other.
\end{proof}

\section{Branched projective structures and branched ${\rm SO}(3,{\mathbb C})$-opers}\label{sec6}

\subsection{Branched ${\rm SO}(3,{\mathbb C})$-opers}\label{se6.1}

We shall now use the terminology in Section \ref{sec4}.

\begin{definition}\label{def2}
Let $(W,\, B_W)$ be a holomorphic $\text{SO}(3,{\mathbb C})$--bundle on a connected Riemann surface $X$.
A \textit{branched filtration} on $(W,\, B_W)$, of type $S$ (see \eqref{a1}),
is a filtration of holomorphic subbundles
\begin{equation}\label{of}
F^W_1\, \subset\, F^W_2\, \subset\, W
\end{equation}
such that
\begin{itemize}
\item $F^W_1$ is holomorphically identified with $K_X\otimes{\mathcal O}_X(-S)$
by a given isomorphism,

\item $B_W(F^W_1\otimes F^W_1)\, =\, 0$,

\item $F^W_2/F^W_1$ is holomorphically identified with ${\mathcal O}_X$ by a given isomorphism,

\item $B_W(F^W_1\otimes F^W_2)\, =\, 0$ (equivalently, $(F^W_1)^\perp\, =\, F^W_2$).
\end{itemize}
\end{definition}

Compare the above definition with the definition of a filtration of $(W,\, B_W)$ given in Section \ref{sec4}.
They differ only at the first condition.

The above conditions imply that
\begin{equation}\label{w2}
W/F^W_2\,=\, (\bigwedge\nolimits ^3 W) \otimes (K_X\otimes{\mathcal O}_X(-S))^* \,=\,
TX\otimes{\mathcal O}_X(S)\, .
\end{equation}

A \textit{branched filtered} $\text{SO}(3,{\mathbb C})$--bundle, of type $S$, is a holomorphic $\text{SO}(3,
{\mathbb C})$--bundle $(W,\, B_W)$ equipped with a branched filtration $\{F^W_i\}_{i=1}^2$ of type $S$ as in \eqref{of}.

\begin{definition}\label{def3}
A \textit{branched holomorphic connection} on a branched filtered $\text{SO}(3,{\mathbb C})$--bundle
$(W,\, B_W,\, \{F^W_i\}_{i=1}^2)$, of type $S$, is a holomorphic connection $D_W$ on $W$ such that
\begin{itemize}
\item the holomorphic connection $D_W$ preserves the bilinear form $B_W$ on $W$,

\item the holomorphic connection on $\bigwedge^3 W\,=\, {\mathcal O}_X$ induced by $D_W$ coincides
with the holomorphic connection on ${\mathcal O}_X$ given by the de Rham differential $d$,

\item $D_W(F^W_1)\,\subset\, F^W_2\otimes K_X$, and

\item the second fundamental form of $F^W_1$ for $D_W$, which is a holomorphic section of
$$\text{Hom}(K_X\otimes{\mathcal O}_X(-S),\, {\mathcal O}_X)\otimes K_X
\,=\, {\mathcal O}_X(S)
$$ (see \eqref{j1}), coincides with the section of ${\mathcal O}_X(S)$ given by the constant function
$1$ on $X$.
\end{itemize}
\end{definition}

\begin{definition}\label{def4}
A \textit{branched} $\text{SO}(3,{\mathbb C})$--\textit{oper}, of type $S$, on $X$ is a
branched filtered $\text{SO}(3,{\mathbb C})$--bundle
$$(W,\, B_W,\, \{F^W_i\}_{i=1}^2)\, ,$$ of type $S$, equipped with a branched holomorphic connection $D_W$.
\end{definition}

Since the divisor $S$ is fixed, we would drop mentioning it explicitly.

The above conditions imply that
the holomorphic section of $$\text{Hom}({\mathcal O}_X,\, TX\otimes{\mathcal O}_X(S))\otimes K_X\,=\,
{\mathcal O}_X(S)$$ that gives the second fundamental form of $F^W_2$ for $D_W$ --- see \eqref{j2} --- coincides
with the section of ${\mathcal O}_X(S)$ given by the constant function $1$ on $X$.

\subsection{Branched projective structures are branched $\text{SO}(3,{\mathbb C})$-opers}

Take a branched $\text{SO}(3,{\mathbb C})$--oper $(W,\, B_W,\, \{F^W_i\}_{i=1}^2, \, D_W)$ on $X$ of type $S$.
Let $P(W)$ be the projective bundle on $X$ that parametrizes the lines in the fibers of $W$. As in
\eqref{pw},
$$
P(W)\, \supset\, {\mathbf P}_W\, \stackrel{\gamma}{\longrightarrow} \, X
$$
is the ${\mathbb C}{\mathbb P}^1$--bundle on $X$ that parametrizes the isotropic lines for $B_W$. 
The line subbundle $F^W_1\, \subset\, W$, being isotropic, produces a holomorphic section
\begin{equation}\label{sw}
s_W\, :\, X\, \longrightarrow\, {\mathbf P}_W
\end{equation}
of the projection $\gamma$.
The connection $D_W$ produces a holomorphic connection on $P(W)$, and this connection on $P(W)$
induces a holomorphic connection on ${\mathbf P}_W$; recall that a
holomorphic connection on ${\mathbf P}_W$ is a holomorphic line subbundle of $T{\mathbf P}_W$ transversal
to the relative tangent bundle $T_\gamma$ for the projection $\gamma$. Let
\begin{equation}\label{nec}
\mathcal{H}_W\, \subset\, T{\mathbf P}_W
\end{equation}
be the holomorphic connection on ${\mathbf P}_W$ given by $D_W$.

\begin{lemma}\label{lem5}
The triple $({\mathbf P}_W,\, \mathcal{H}_W,\, s_W)$ (see \eqref{sw} and \eqref{nec}) defines a
branched projective structure on $X$ of branching type $S$.
\end{lemma}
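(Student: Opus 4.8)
The plan is to reduce everything to the equivalent ``triple'' description of branched projective structures recalled in Section~\ref{se5.1}: a triple $(\gamma,\, {\mathcal H},\, s)$ consisting of a ${\mathbb C}{\mathbb P}^1$--bundle, a holomorphic connection and a holomorphic section defines a branched projective structure of branching type $S$ precisely when the vanishing divisor of the homomorphism $\widehat{ds}$ in \eqref{dc2} equals $S$. Since ${\mathbf P}_W$ is a ${\mathbb C}{\mathbb P}^1$--bundle, ${\mathcal H}_W$ is a holomorphic connection on it and $s_W$ is a holomorphic section, the entire statement comes down to showing that the divisor of $\widehat{ds_W}$ is exactly $S$.

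First I would identify the line bundle in which $\widehat{ds_W}$ lives. For an isotropic line $\ell\,\subset\, W_x$, the fiber of the relative tangent bundle $T_\gamma$ of ${\mathbf P}_W$ at the point $[\ell]$ is the tangent space at $[\ell]$ to the smooth conic of isotropic lines inside ${\mathbb P}(W_x)$; a direct computation with the quadratic form $B_W$ shows this tangent space is ${\rm Hom}(\ell,\, \ell^\perp/\ell)$. Since $s_W$ is the section determined by $F^W_1$ and $(F^W_1)^\perp\,=\, F^W_2$, this gives $s_W^*T_\gamma\,=\, {\rm Hom}(F^W_1,\, F^W_2/F^W_1)$. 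Using the identifications $F^W_1\,\cong\, K_X\otimes {\mathcal O}_X(-S)$ and $F^W_2/F^W_1\,\cong\, {\mathcal O}_X$ of Definition~\ref{def2}, it follows that $\widehat{ds_W}$ is a holomorphic section of ${\rm Hom}(TX,\, s_W^*T_\gamma)\,=\, s_W^*T_\gamma\otimes K_X\,=\, {\mathcal O}_X(S)$.

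The crucial step is to show that, under these identifications, $\widehat{ds_W}$ coincides with the second fundamental form of $F^W_1$ for $D_W$ (any overall nowhere-vanishing factor is irrelevant, since only the vanishing divisor matters for the conclusion). The horizontal distribution ${\mathcal H}_W$ is induced by $D_W$ through $P(W)$, so in a local $D_W$--parallel frame of $W$ the distribution ${\mathcal H}_W$ becomes the ``constant line'' distribution. Decomposing $ds_W$ according to $T_\gamma\oplus {\mathcal H}_W$ and projecting onto $T_\gamma$, the resulting vertical part measures the derivative of the moving line $x\,\longmapsto\, (F^W_1)_x$ in the parallel frame, which is exactly $D_W(F^W_1)$ taken modulo $F^W_1$; the condition $D_W(F^W_1)\,\subset\, F^W_2\otimes K_X$ from Definition~\ref{def3} guarantees that this lands in ${\rm Hom}(F^W_1,\, F^W_2/F^W_1)\otimes K_X$, matching the target of $\widehat{ds_W}$. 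I expect this identification to be the main obstacle, since it requires unwinding the construction of ${\mathcal H}_W$ from $D_W$ together with the intrinsic description of $ds_W$; I would carry it out in a local trivialization around a branch point $x_i$.

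Finally, by the last condition in Definition~\ref{def3}, the second fundamental form of $F^W_1$ for $D_W$ is the section of ${\mathcal O}_X(S)$ given by the constant function $1$ on $X$. The canonical section of ${\mathcal O}_X(S)$ arising from $1\,\in\, {\mathcal O}_X$ has vanishing divisor exactly $S$, with multiplicity one at each point since $S$ is reduced. Hence the divisor of $\widehat{ds_W}$ equals $S$, and by the triple description recalled above the triple $({\mathbf P}_W,\, {\mathcal H}_W,\, s_W)$ defines a branched projective structure on $X$ of branching type $S$.
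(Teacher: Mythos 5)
Your proposal is correct and takes essentially the same route as the paper's own proof: the paper likewise reduces the statement to the triple description of branched projective structures, identifies the divisor of $\widehat{ds_W}$ with the divisor of the second fundamental form of $F^W_1$ for $D_W$, and then invokes the last condition of Definition \ref{def3} to conclude that this divisor is exactly $S$. The only difference is that the paper asserts the coincidence of the two divisors without elaboration, whereas you supply the underlying justification (the identification $s_W^*T_\gamma\,=\,{\rm Hom}(F^W_1,\, F^W_2/F^W_1)$ coming from the tangent space to the conic of isotropic lines, and the parallel-frame computation showing the vertical part of $ds_W$ is the second fundamental form), which fills in precisely the step the paper leaves implicit.
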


\begin{proof}
Recall that the last condition in the definition of a branched holomorphic connection on
$(W,\, B_W,\, \{F^W_i\}_{i=1}^2)$ (see Definition \ref{def3})
says that the second fundamental form of $F^W_1$ for $D_W$ is the
holomorphic section of
$$\text{Hom}(K_X\otimes{\mathcal O}_X(-S),\, {\mathcal O}_X)\otimes K_X
\,=\, {\mathcal O}_X(S)
$$
given by the constant function $1$ on $X$. On the other hand, the divisor for the second fundamental form of
$F^W_1$ for $D_W$ coincides with the divisor for the homomorphism $$\widehat{ds_W}\, :\, TX
\, \longrightarrow\, (s_W)^*T_\gamma$$ (see \eqref{dc2}). Consequently,
the divisor for the homomorphism $\widehat{ds_W}$ is $S$. Hence 
$({\mathbf P}_W,\, \mathcal{H}_W,\, s_W)$ defines a
branched projective structure on $X$ of branching type $S$; see Section \ref{se5.1}.
\end{proof}

Now let $\mathcal P$ be a branched projective structure on $X$ of branching type $S$.
Let $$\gamma\, :\, {\mathbf P}_{\mathcal P}\, \longrightarrow\, X$$ be a holomorphic ${\mathbb
C}{\mathbb P}^1$--bundle, ${\mathcal H}_{\mathcal P}$ a holomorphic connection on ${\mathbf P}_{\mathcal P}$
and $s_{\mathcal P}$ a holomorphic section of $\gamma$, such that the triple
\begin{equation}\label{tr}
(\gamma,\, {\mathcal H}_{\mathcal P}, \, s_{\mathcal P})
\end{equation}
gives the branched projective structure $\mathcal P$; see Section \ref{se5.1}. Define the
holomorphic vector bundle of rank three on $X$
\begin{equation}\label{ew}
W\, :=\, \gamma_* T_\gamma\, \longrightarrow\, X\, ,
\end{equation}
where $T_\gamma\, \subset\, T{\mathbf P}_{\mathcal P}$ is the relative holomorphic tangent 
for the projection $\gamma$ (as in \eqref{tga}). If $s$ and $t$ are locally defined holomorphic
sections of $T_\gamma\, \subset\, T{\mathbf P}_{\mathcal P}$, then the Lie bracket
$[s,\, t]$ is also a section of $T_\gamma$.
Consequently, each fiber of $W\, =\, \gamma_* T_\gamma$ is a Lie algebra
isomorphic to $\mathfrak{sl}(2,{\mathbb C})$. Let
\begin{equation}\label{ew1}
B_W\, \in\, H^0(X,\, \text{Sym}^2(W^*))
\end{equation}
be the holomorphic section given by the fiberwise Killing form on $W$.

As shown in Section \ref{sec4}, the pair $(W,\, B_W)$ can also be constructed by choosing a rank two holomorphic 
vector bundle $\mathbb V$ on $X$ such that ${\mathbb P}({\mathbb V})\,=\, {\mathbf P}_{\mathcal P}$; then $W\,=\, 
{\rm ad}({\mathbb V})\, \subset\, \text{End}({\mathbb V})$ (the subalgebra bundle of trace zero endomorphisms) and 
$B_W$ is constructed using the trace map of endomorphisms.

Exactly as in \eqref{of2}, construct the filtration
\begin{equation}\label{ot3}
F^{\mathbb P}_1\, :=\,
\gamma_* (T_\gamma\otimes {\mathcal O}_{{\mathbf P}_{\mathcal P}}(-2s_{\mathcal P}(X)))
\, \subset\, F^{\mathbb P}_2\, :=\,
\gamma_* (T_\gamma\otimes {\mathcal O}_{{\mathbf P}_{\mathcal P}}(-s_{\mathcal P}(X)))
\, \subset\, \gamma_* T_\gamma\,=:\, W
\end{equation}
of $W$.

\begin{lemma}\label{lem6}
The triple $(W,\, B_W,\, \{F^{\mathbb P}_i\}_{i=1}^2)$ constructed in \eqref{ew}, \eqref{ew1}, \eqref{ot3}
is a branched filtered ${\rm SO}(3,{\mathbb C})$--bundle of type $S$ (see \eqref{a1}).
\end{lemma}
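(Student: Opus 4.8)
The plan is to verify, one at a time, the four conditions of Definition~\ref{def2} together with the conditions making $(W,\, B_W)$ an ${\rm SO}(3,{\mathbb C})$--bundle, following closely the unbranched computation of Section~\ref{sec4}; the branching divisor $S$ enters in exactly one place, namely the identification of the line subbundle $F^{\mathbb P}_1$. First I would record that $(W,\, B_W)$ is a holomorphic ${\rm SO}(3,{\mathbb C})$--bundle. Choosing a rank two holomorphic vector bundle ${\mathbb V}$ on $X$ with ${\mathbb P}({\mathbb V})\,=\,{\mathbf P}_{\mathcal P}$, we have $W\,=\,{\rm ad}({\mathbb V})$, so $\bigwedge^3 W\,=\,\det\text{End}({\mathbb V})\,=\,{\mathcal O}_X$, and $B_W$ is fiberwise nondegenerate because the Killing form of $\mathfrak{sl}(2,{\mathbb C})$ is nondegenerate; the compatibility of the trivialization of $\bigwedge^3 W$ with the induced form holds as in Section~\ref{sec4}. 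None of this sees the branching.

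Next I would treat the two isotropy conditions $B_W(F^{\mathbb P}_1\otimes F^{\mathbb P}_1)\,=\,0$ and $(F^{\mathbb P}_1)^\perp\,=\,F^{\mathbb P}_2$, which are fiberwise statements. For each $x\,\in\,X$ the subspace $(F^{\mathbb P}_1)_x$ consists of the holomorphic vector fields on the fiber $\gamma^{-1}(x)\,\cong\,{\mathbb C}{\mathbb P}^1$ vanishing to order two at $s_{\mathcal P}(x)$, hence is the nilpotent subalgebra of $W_x\,\cong\,\mathfrak{sl}(2,{\mathbb C})$, while $(F^{\mathbb P}_2)_x$ consists of those vanishing to order one, hence is the Borel subalgebra containing it. For the Killing form the nilpotent subalgebra is isotropic with orthogonal complement exactly the Borel subalgebra, which yields both conditions just as in Section~\ref{sec4}.

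The remaining two conditions are the identifications $F^{\mathbb P}_2/F^{\mathbb P}_1\,\cong\,{\mathcal O}_X$ and $F^{\mathbb P}_1\,\cong\,K_X\otimes{\mathcal O}_X(-S)$. Since on each fiber the bundle $T_\gamma\otimes{\mathcal O}_{{\mathbf P}_{\mathcal P}}(-2s_{\mathcal P}(X))$ restricts to ${\mathcal O}_{{\mathbb C}{\mathbb P}^1}$ and $T_\gamma\otimes{\mathcal O}_{{\mathbf P}_{\mathcal P}}(-s_{\mathcal P}(X))$ restricts to ${\mathcal O}_{{\mathbb C}{\mathbb P}^1}(1)$, restriction to the section gives canonical isomorphisms $F^{\mathbb P}_1\,\cong\,(s_{\mathcal P})^*(T_\gamma\otimes{\mathcal O}_{{\mathbf P}_{\mathcal P}}(-2s_{\mathcal P}(X)))$ and $F^{\mathbb P}_2/F^{\mathbb P}_1\,\cong\,(s_{\mathcal P})^*(T_\gamma\otimes{\mathcal O}_{{\mathbf P}_{\mathcal P}}(-s_{\mathcal P}(X)))$, exactly as in \eqref{lb}. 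Writing ${\mathbf N}\,=\,(s_{\mathcal P})^*T_\gamma$ for the normal bundle of the divisor $s_{\mathcal P}(X)\,\subset\,{\mathbf P}_{\mathcal P}$ and using the Poincar\'e adjunction identification $(s_{\mathcal P})^*{\mathcal O}_{{\mathbf P}_{\mathcal P}}(-s_{\mathcal P}(X))\,=\,{\mathbf N}^*$, the second isomorphism becomes ${\mathbf N}\otimes{\mathbf N}^*\,=\,{\mathcal O}_X$, giving the third condition at once, while the first becomes ${\mathbf N}\otimes({\mathbf N}^*)^{\otimes 2}\,=\,{\mathbf N}^*$.

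The crux, and the one step where the branched case genuinely differs from the unbranched one, is the identification of ${\mathbf N}$. In Section~\ref{sec4} the homomorphism $\widehat{ds_{\mathcal P}}\,:\,TX\,\longrightarrow\,{\mathbf N}$ is an isomorphism and one obtains ${\mathbf N}^*\,=\,K_X$. Here, by the very definition of branching type $S$ (Section~\ref{se5.1}), the divisor of $\widehat{ds_{\mathcal P}}$ is $S$; thus $\widehat{ds_{\mathcal P}}$ is a holomorphic section of $\text{Hom}(TX,\,{\mathbf N})\,=\,(TX)^*\otimes{\mathbf N}$ with vanishing divisor $S$, forcing ${\mathbf N}\,=\,TX\otimes{\mathcal O}_X(S)$ and hence ${\mathbf N}^*\,=\,K_X\otimes{\mathcal O}_X(-S)$. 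Substituting this into the previous paragraph gives $F^{\mathbb P}_1\,\cong\,{\mathbf N}^*\,=\,K_X\otimes{\mathcal O}_X(-S)$, which is precisely the first condition of Definition~\ref{def2} and accounts for the twist by $-S$ distinguishing the branched filtration from the unbranched one. Assembling these verifications completes the proof.
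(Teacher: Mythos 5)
Your proof is correct and follows essentially the same route as the paper's: the canonical isomorphisms $F^{\mathbb P}_1\,\cong\,(s_{\mathcal P})^*(T_\gamma\otimes{\mathcal O}_{{\mathbf P}_{\mathcal P}}(-2s_{\mathcal P}(X)))$ and $F^{\mathbb P}_2/F^{\mathbb P}_1\,\cong\,(s_{\mathcal P})^*(T_\gamma\otimes{\mathcal O}_{{\mathbf P}_{\mathcal P}}(-s_{\mathcal P}(X)))$ by fiberwise evaluation, the Poincar\'e adjunction plus transversality identification of the normal bundle, the key observation that the divisor of $\widehat{ds_{\mathcal P}}$ being $S$ forces ${\mathbf N}\,=\,TX\otimes{\mathcal O}_X(S)$, and the fiberwise nilpotent/Borel subalgebra argument for the isotropy conditions all match the paper's proof. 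The only differences are cosmetic (you verify the isotropy conditions before the line bundle identifications, and you phrase the twist computation through ${\mathbf N}$ and ${\mathbf N}^*$ before substituting), so nothing further is needed.
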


\begin{proof}
The holomorphic line bundle $F^{\mathbb P}_1$ in \eqref{ot3} admits a canonical isomorphism
\begin{equation}\label{l1}
F^{\mathbb P}_1\, \stackrel{\sim}{\longrightarrow}\, (s_{\mathcal P})^*(T_\gamma\otimes
{\mathcal O}_{{\mathbf P}_{\mathcal P}}(-2s_{\mathcal P}(X)))\, ,
\end{equation}
because the restriction of $T_\gamma\otimes {\mathcal O}_{{\mathbf P}_{\mathcal P}}(-2s_{\mathcal P}(X))$
to any fiber of $\gamma$ is holomorphically trivializable, and the evaluation, at a given point, of
the global sections of a holomorphically trivializable bundle is an isomorphism.
The quotient bundle $F^{\mathbb P}_2/F^{\mathbb P}_1$ admits a canonical isomorphism
\begin{equation}\label{l2}
F^{\mathbb P}_2/F^{\mathbb P}_1\, \stackrel{\sim}{\longrightarrow}\,
(s_{\mathcal P})^*(T_\gamma\otimes
{\mathcal O}_{{\mathbf P}_{\mathcal P}}(-s_{\mathcal P}(X)))\, ,
\end{equation}
which is again constructed by evaluating the holomorphic sections of
$T_\gamma\otimes {\mathcal O}_{{\mathbf P}_{\mathcal P}}(-s_{\mathcal P}(X))\vert_{\gamma^{-1}(x)}$
at the point $s_{\mathcal P}(x)$ for every $x\, \in\, X$.

Now, by the Poincar\'e adjunction formula, $(s_{\mathcal P})^*
{\mathcal O}_{{\mathbf P}_{\mathcal P}}(-s_{\mathcal P}(X))$
is identified with the dual bundle $(s_{\mathcal P})^*{\mathbf N}^*$, where
${\mathbf N}\,=\, N_{s_{\mathcal P}(X)}$ is the normal bundle of
the divisor $s_{\mathcal P}(X)\, \subset\, {\mathbf P}_{\mathcal P}$. On the other hand,
$\mathbf N$ is canonically identified with the restriction
$T_\gamma\vert_{s_{\mathcal P}(X)}$, because 
the divisor $s_{\mathcal P}(X)$ is transversal to the fibration $\gamma$. Consequently,
$(s_{\mathcal P})^*{\mathcal O}_{{\mathbf P}_{\mathcal P}}(-s_{\mathcal P}(X))$
is identified with $(s_{\mathcal P})^*T^*_\gamma$.

Recall from Section \ref{se5.1} that the divisor for the homomorphism $\widehat{ds_{\mathcal P}}\, :\,
TX\, \longrightarrow\, (s_{\mathcal P})^* T_\gamma$ (see 
\eqref{dc2}) coincides with $S$. Consequently, $\widehat{ds_{\mathcal P}}$ identifies $(TX)
\otimes {\mathcal O}_X(S)$ with $(s_{\mathcal P})^*T_\gamma$. Therefore, the
line bundle $(s_{\mathcal P})^*(T_\gamma\otimes
{\mathcal O}_{{\mathbf P}_{\mathcal P}}(-2s_{\mathcal P}(X)))$
in \eqref{l1} is identified with $K_X\otimes {\mathcal O}_X(-S)$,
and the line bundle $(s_{\mathcal P})^*(T_\gamma\otimes
{\mathcal O}_{{\mathbf P}_{\mathcal P}}(-s_{\mathcal P}(X)))$
in \eqref{l2} is identified with ${\mathcal O}_X$.

{}From the above descriptions of $F^{\mathbb P}_1$ and $F^{\mathbb P}_2$ it follows that
for each point $x\, \in\, X$, the fiber $(F^{\mathbb P}_1)_x\, \subset\, W_x$ is a nilpotent
subalgebra of the Lie algebra $W_x$, and $(F^{\mathbb P}_2)_x\, \subset\, W_x$ is the unique
Borel subalgebra of $W_x$ containing $(F^{\mathbb P}_1)_x$. These imply that $B_W(F^{\mathbb
P}_1\otimes F^{\mathbb P}_1)\, =\, 0$, and $(F^{\mathbb P}_1)^\perp\,=\, F^{\mathbb P}_2$.
Hence $(W,\, B_W,\, \{F^{\mathbb P}_i\}_{i=1}^2)$ is 
branched filtered ${\rm SO}(3,{\mathbb C})$--bundle of type $S$.
\end{proof}

Consider the holomorphic connection ${\mathcal H}_{\mathcal P}$ in \eqref{tr} on 
the holomorphic ${\mathbb C}{\mathbb P}^1$--bundle ${\mathbf P}_{\mathcal P}$. It produces
a holomorphic connection on the direct image $W\, =\, \gamma_* T_\gamma$; see \eqref{j-1}. This
holomorphic connection on $W$ will be denoted by ${\mathbb D}_{\mathcal P}$.

\begin{lemma}\label{lem7}
The above connection ${\mathbb D}_{\mathcal P}$ on $W$ is a branched holomorphic connection
on the branched filtered ${\rm SO}(3,{\mathbb C})$--bundle $(W,\, B_W,\, \{F^{\mathbb P}_i\}_{i=1}^2)$
in Lemma \ref{lem6}. In other words, $$(W,\, B_W,\, \{F^{\mathbb P}_i\}_{i=1}^2,\, {\mathbb D}_{\mathcal P})$$
is a ${\rm SO}(3,{\mathbb C})$--oper.
\end{lemma}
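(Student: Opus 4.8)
The plan is to reduce everything to the unbranched theory of Section~\ref{sec4} on the dense open subset $\mathbb{X} := X \setminus \mathbb{S}$, and then to propagate the resulting identities across the finitely many points of $\mathbb{S}$ by holomorphicity. First I would observe that over $\mathbb{X}$ the branched projective structure $\mathcal{P}$ restricts to an ordinary (unbranched) projective structure, and that the restrictions of $W$, $B_W$, $\{F^{\mathbb{P}}_i\}_{i=1}^2$ and $\mathbb{D}_{\mathcal{P}}$ are precisely the data produced by the construction at the end of Section~\ref{sec4} applied to this ordinary structure. Hence, by that construction, $(W,\, B_W,\, \{F^{\mathbb{P}}_i\}_{i=1}^2,\, \mathbb{D}_{\mathcal{P}})|_{\mathbb{X}}$ is an honest $\mathrm{SO}(3,\mathbb{C})$--oper on $\mathbb{X}$; in particular all four defining properties listed in Definition~\ref{def3} already hold over $\mathbb{X}$.

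Next I would upgrade the first three conditions from $\mathbb{X}$ to all of $X$. The key point is that each of them is the vanishing of a globally defined holomorphic section on $X$: the condition that $\mathbb{D}_{\mathcal{P}}$ preserve $B_W$ is the vanishing of the induced covariant derivative $\mathbb{D}_{\mathcal{P}}(B_W) \in H^0(X,\, \text{Sym}^2(W^*) \otimes K_X)$; the condition on $\bigwedge^3 W = \mathcal{O}_X$ is the vanishing of the holomorphic $1$--form measuring the difference between the induced connection and the de Rham differential; and the inclusion $\mathbb{D}_{\mathcal{P}}(F^{\mathbb{P}}_1) \subset F^{\mathbb{P}}_2 \otimes K_X$ is the vanishing of the $(W/F^{\mathbb{P}}_2)$--component of the second fundamental form of $F^{\mathbb{P}}_1$, a section of $\text{Hom}(F^{\mathbb{P}}_1,\, W/F^{\mathbb{P}}_2) \otimes K_X$. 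Since $\mathbb{X}$ is dense and these holomorphic sections vanish on $\mathbb{X}$, they vanish identically on $X$, giving the first three bullets of Definition~\ref{def3}.

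The main obstacle is the last condition: that the second fundamental form of $F^{\mathbb{P}}_1$, regarded via the canonical identifications of Lemma~\ref{lem6} as a holomorphic section $\mathfrak{S}$ of $\text{Hom}(K_X \otimes \mathcal{O}_X(-S),\, \mathcal{O}_X) \otimes K_X = \mathcal{O}_X(S)$, equals the section determined by the constant function $1$ under $\mathcal{O}_X \hookrightarrow \mathcal{O}_X(S)$, which I denote $1_S$. The subtlety here is purely one of identifications. I would stress that $s_{\mathcal{P}}(X)$ is always transversal to the \emph{vertical} foliation $T_\gamma$ — this is why $\mathbf{N} \cong (s_{\mathcal{P}})^* T_\gamma$ in Lemma~\ref{lem6} holds even over $\mathbb{S}$ — whereas the branching is encoded in the failure of $\widehat{ds_{\mathcal{P}}}$ to be transverse to the horizontal distribution, its divisor being exactly $S$. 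Thus the identification $F^{\mathbb{P}}_1 \cong K_X \otimes \mathcal{O}_X(-S)$ is built from $\widehat{ds_{\mathcal{P}}} : TX \otimes \mathcal{O}_X(S) \xrightarrow{\sim} (s_{\mathcal{P}})^* T_\gamma$, and over $\mathbb{X}$, since $\widehat{ds_{\mathcal{P}}}$ factors as the canonical $TX \to TX \otimes \mathcal{O}_X(S)$ (twisting by $1_S$) followed by this isomorphism, untwisting by $1_S$ recovers exactly the unbranched identification $F^{\mathbb{P}}_1|_{\mathbb{X}} \cong K_{\mathbb{X}}$ of Section~\ref{sec4}.

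Granting this matching, the computation finishes quickly: over $\mathbb{X}$ the unbranched oper condition gives that the second fundamental form of $F^{\mathbb{P}}_1$ equals the constant $1 \in H^0(\mathbb{X},\, \mathcal{O}_{\mathbb{X}})$, so through the $1_S$--trivialization of $\mathcal{O}_X(S)|_{\mathbb{X}}$ we obtain $\mathfrak{S}|_{\mathbb{X}} = 1_S|_{\mathbb{X}}$. As $\mathfrak{S}$ and $1_S$ are holomorphic sections of the line bundle $\mathcal{O}_X(S)$ agreeing on the dense subset $\mathbb{X}$, they agree on all of $X$, which is the fourth condition of Definition~\ref{def3}. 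The only step demanding genuine care is the bookkeeping of the previous paragraph — confirming that the Poincar\'e adjunction identification and the $\widehat{ds_{\mathcal{P}}}$ identification combine over $\mathbb{X}$ to reproduce precisely the unbranched normalization, so that the scalar obtained is $1$ and not merely some nonzero constant. Alternatively, I could fix the normalization directly in the local branched coordinate $z = w^2$ of Proposition~\ref{prop3}, comparing the pulled-back standard oper on the disk with $\mathbb{D}_{\mathcal{P}}$ near each $x_i$.
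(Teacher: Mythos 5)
Your proposal is correct, but it takes a genuinely different route from the paper's own proof. The paper argues as follows: conditions (1) and (2) of Definition \ref{def3} are deduced from $\mathrm{PSL}(2,\mathbb{C})$--equivariance (invariance of the symmetric form on $\mathrm{Sym}^2(\mathbb{C}^2)$, and triviality of the action on $\bigwedge^3\mathrm{Sym}^2(\mathbb{C}^2)$), while conditions (3) and (4) are verified by a local computation at the branch points: flatness of ${\mathcal H}_{\mathcal P}$ lets one trivialize $({\mathbf P}_{\mathcal P},\,{\mathcal H}_{\mathcal P})$ locally, and in a chart compatible with $\mathcal P$ the section $s_{\mathcal P}$ takes the normal form $z\,\longmapsto\,(z,\,z^2)$ near each $x_i$, after which the two filtration conditions are checked directly. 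You instead note that every one of the four conditions is an equality of globally defined holomorphic sections on $X$ --- the covariant derivative ${\mathbb D}_{\mathcal P}(B_W)$, the difference $1$--form on $\bigwedge^3 W$, the $W/F^{\mathbb P}_2$--component of the second fundamental form of $F^{\mathbb P}_1$, and finally $\mathfrak{S}-1_S\,\in\, H^0(X,\,{\mathcal O}_X(S))$ --- each of which vanishes on the dense open set $\mathbb{X}\,=\,X\setminus\mathbb{S}$ by the unbranched theory of Section \ref{sec4}, hence vanishes identically. The only step carrying real content in your argument is the normalization bookkeeping: the branched identification $F^{\mathbb P}_1\,\cong\, K_X\otimes{\mathcal O}_X(-S)$ of Lemma \ref{lem6} is built from the twisted isomorphism $TX\otimes{\mathcal O}_X(S)\,\stackrel{\sim}{\longrightarrow}\,(s_{\mathcal P})^*T_\gamma$, whereas the identification of $F^{\mathbb P}_2/F^{\mathbb P}_1$ with the trivial bundle is canonical (the two occurrences of $\widehat{ds_{\mathcal P}}$ cancel), so the branched and unbranched conventions over $\mathbb{X}$ differ precisely by the $1_S$--twist on $F^{\mathbb P}_1$; this is exactly what turns the unbranched constant $1$ into the section $1_S$ rather than some other nonvanishing section, and you address it correctly. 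As for what each approach buys: the paper's computation is self-contained and exhibits concretely the behaviour of the oper data at the branch points (the same $z\mapsto(z,z^2)$ normal form is used again, e.g.\ in Proposition \ref{prop3}), while your argument avoids any computation at the branch points and makes structurally transparent why all the identities propagate across $\mathbb{S}$, at the cost of leaning on the Section \ref{sec4} assertions (themselves only sketched there) and on the careful matching of identifications.
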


\begin{proof}
The connection ${\mathbb D}_{\mathcal P}$ preserves the bilinear form $B_W$ on $W$,
because the action of $\text{PSL}(2,{\mathbb C})$ on $\text{Sym}^2({\mathbb C}^2)$ preserves
the symmetric bilinear form on $\text{Sym}^2({\mathbb C}^2)$ given by the standard
symplectic form on ${\mathbb C}^2$. Also, the holomorphic connection on $\bigwedge^3 W$ induced by
${\mathbb D}_{\mathcal P}$ coincides with the holomorphic
connection on ${\mathcal O}_X$ given by the de Rham differential $d$, because the action of
$\text{PSL}(2,{\mathbb C})$ on $\bigwedge^3 \text{Sym}^2({\mathbb C}^2)$ is the trivial action.

Since any holomorphic connection on a holomorphic bundle over a Riemann surface is automatically
integrable, the ${\mathbb C}{\mathbb P}^1$--bundle ${\mathbf P}_{\mathcal P}$ is locally isomorphic
the trivial holomorphic
${\mathbb C}{\mathbb P}^1$--bundle, and ${\mathcal H}_{\mathcal P}$ in \eqref{tr} is locally
holomorphically isomorphic to the trivial connection on the trivial holomorphic
${\mathbb C}{\mathbb P}^1$--bundle. So $W$ is locally
holomorphically isomorphic to the trivial vector bundle whose fibers are quadratic polynomials in
one variable, and ${\mathbb D}_{\mathcal P}$ is the trivial connection on this locally defined trivial
holomorphic vector bundle. With respect to these trivialization, and a suitable pair $(U,\, \phi)$ as
in \eqref{de1} compatible with the projective structure $\mathcal P$, the section $s_{\mathcal P}$ in
\eqref{tr} around any point $x_i\, \in\, \mathbb S$ is of the form $z\, \longmapsto (z,\, z^2)$, where
$z$ is a holomorphic function around $x_i$ with $z(x_i)\,=\, 0$.

In view of the above observations, from a straight-forward computation it follows that
\begin{itemize}
\item ${\mathbb D}_{\mathcal P}(F^{\mathbb P}_1)\,\subset\, F^{\mathbb P}_2\otimes K_X$, and

\item the second fundamental form of $F^{\mathbb P}_1$ for ${\mathbb D}_{\mathcal P}$, which is a
holomorphic section of
$$\text{Hom}(K_X\otimes{\mathcal O}_X(-S),\, {\mathcal O}_X)\otimes K_X
\,=\, {\mathcal O}_X(S)\, ,
$$
coincides with the section of ${\mathcal O}_X(S)$ given by the constant function $1$ on $X$.
\end{itemize}
This completes the proof.
\end{proof}

The above construction of a branched projective structure on $X$ from a 
branched ${\rm SO}(3,{\mathbb C})$--oper (see Lemma \ref{lem5}), and the construction of a
branched ${\rm SO}(3,{\mathbb C})$--oper from a branched projective structure (see Lemma \ref{lem7}),
are clearly inverses of each other.

We summarize the constructions done in this subsection in the following theorem.

\begin{theorem}\label{thm1}
There is a natural bijective correspondence between the branched projective structures on $X$
with branching type $S$ and the branched ${\rm SO}(3,{\mathbb C})$--opers on $X$ of type $S$.
\end{theorem}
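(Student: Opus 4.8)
The plan is to assemble the correspondence directly out of Lemmas \ref{lem5}, \ref{lem6} and \ref{lem7}, and then to verify that the two resulting constructions are mutually inverse. Concretely, Lemma \ref{lem5} produces a map
$$
\Phi\, :\, \{\text{branched } {\rm SO}(3,{\mathbb C})\text{--opers of type } S\}\, \longrightarrow\, {\mathcal P}_S
$$
sending a branched oper $(W,\, B_W,\, \{F^W_i\}_{i=1}^2,\, D_W)$ to the branched projective structure defined by the triple $({\mathbf P}_W,\, {\mathcal H}_W,\, s_W)$, while Lemmas \ref{lem6} and \ref{lem7} together produce a map
$$
\Xi\, :\, {\mathcal P}_S\, \longrightarrow\, \{\text{branched } {\rm SO}(3,{\mathbb C})\text{--opers of type } S\}
$$
sending $\mathcal P$, presented by a triple $(\gamma,\, {\mathcal H}_{\mathcal P},\, s_{\mathcal P})$, to the branched oper $(\gamma_*T_\gamma,\, B_W,\, \{F^{\mathbb P}_i\}_{i=1}^2,\, {\mathbb D}_{\mathcal P})$. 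Both $\Phi$ and $\Xi$ are well defined by the three lemmas (the output being independent of the chosen triple, and of the choice of rank two $\mathbb V$ with ${\mathbb P}({\mathbb V})\,=\,{\mathbf P}_{\mathcal P}$); it remains only to check that $\Phi\circ\Xi\,=\,{\rm Id}$ and $\Xi\circ\Phi\,=\,{\rm Id}$.

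First I would treat $\Phi\circ\Xi$. Starting from $\mathcal P$ with triple $(\gamma,\, {\mathcal H}_{\mathcal P},\, s_{\mathcal P})$, the bundle $W\,=\,\gamma_*T_\gamma$ equals ${\rm ad}({\mathbb V})$ for any rank two $\mathbb V$ with ${\mathbb P}({\mathbb V})\,=\,{\mathbf P}_{\mathcal P}$, as recalled in Section \ref{sec4}. The key point is that the ${\mathbb C}{\mathbb P}^1$--bundle ${\mathbf P}_W$ of $B_W$--isotropic lines in $W$ is canonically identified with ${\mathbf P}_{\mathcal P}$: fibrewise, an isotropic line in $\mathfrak{sl}(2,{\mathbb C})$ for the Killing form is exactly a nilpotent line, and sending a nilpotent subalgebra to its unique fixed line in $\mathbb V$ identifies the isotropic lines with ${\mathbb P}({\mathbb V})$. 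Under this identification the isotropic section $s_W$ attached to $F^{\mathbb P}_1$ (whose fibre at $x$ is precisely the nilpotent subalgebra corresponding to $s_{\mathcal P}(x)$) becomes $s_{\mathcal P}$, and the connection ${\mathcal H}_W$ induced by ${\mathbb D}_{\mathcal P}$ becomes ${\mathcal H}_{\mathcal P}$, since passing from a connection on ${\mathbf P}_{\mathcal P}$ to the induced connection on the adjoint bundle and back is the identity (the relevant $\text{PSL}(2,{\mathbb C})$ acting faithfully on its Lie algebra). Hence $({\mathbf P}_W,\, {\mathcal H}_W,\, s_W)$ defines the same branched projective structure $\mathcal P$.

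For $\Xi\circ\Phi$ I would run the same identifications in the opposite direction: given a branched oper $(W,\, B_W,\, \{F^W_i\}_{i=1}^2,\, D_W)$, form $\gamma\, :\, {\mathbf P}_W\,\longrightarrow\, X$ and reconstruct $\gamma_*T_\gamma$; the $\mathfrak{sl}(2,{\mathbb C})$--identification again yields a canonical isomorphism $W\,\stackrel{\sim}{\longrightarrow}\,\gamma_*T_\gamma$ of ${\rm SO}(3,{\mathbb C})$--bundles carrying $B_W$ to the Killing form and carrying $D_W$ to ${\mathbb D}_{\mathcal P}$. One then checks that this isomorphism identifies the original filtration $\{F^W_i\}_{i=1}^2$ with the filtration $\{F^{\mathbb P}_i\}_{i=1}^2$ reconstructed from $s_W$: by construction $s_W$ is cut out by the isotropic line $F^W_1$, so $F^{\mathbb P}_1\,=\,\gamma_*(T_\gamma\otimes{\mathcal O}_{{\mathbf P}_W}(-2s_W(X)))$ has the same fibrewise nilpotent subalgebra as $F^W_1$, and $F^{\mathbb P}_2\,=\,(F^{\mathbb P}_1)^\perp$ matches $F^W_2\,=\,(F^W_1)^\perp$. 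I expect the main obstacle to be precisely this bookkeeping of the \emph{given} isomorphisms built into Definitions \ref{def2} and \ref{def3}: one must verify that the canonical identifications $F^{\mathbb P}_1\,\cong\,K_X\otimes{\mathcal O}_X(-S)$ and $F^{\mathbb P}_2/F^{\mathbb P}_1\,\cong\,{\mathcal O}_X$ coming from the Poincar\'e adjunction argument of Lemma \ref{lem6} agree with the structural isomorphisms of the starting oper, and that the normalisation of the second fundamental form (the constant function $1$) is preserved. All of this is local near $S$ and reduces, via the branched chart \eqref{e10}, to the explicit model $z\,\longmapsto\,(z,\, z^2)$ used in the proof of Lemma \ref{lem7}, so the verification is a direct computation rather than a conceptual difficulty.
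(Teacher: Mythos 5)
Your proposal takes essentially the same route as the paper: the paper likewise assembles the correspondence from Lemmas \ref{lem5}, \ref{lem6} and \ref{lem7} and then simply asserts that the two constructions ``are clearly inverses of each other.'' Your verification of the inverse property --- identifying the isotropic lines for the Killing form with the nilpotent lines in $\mathfrak{sl}(2,\mathbb{C})$, invoking faithfulness of the adjoint action to match the connections, and matching the filtrations fibrewise --- is correct and in fact supplies the details the paper leaves implicit.
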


\subsection{Logarithmic connection from branched $\text{SO}(3,{\mathbb C})$-opers}\label{se6.3}

In Proposition \ref{prop3} we constructed a logarithmic connection on $J^2({\mathbb T})$ from a branched
projective structure on $X$, where ${\mathbb T}\,=\,
(TX)\otimes {\mathcal O}_X(S)$ (see \eqref{e8}). On the other hand, Theorem
\ref{thm1} identifies branched projective structure on $X$ with branched ${\rm SO}(3,{\mathbb C})$--opers
on $X$. Thus a branched ${\rm SO}(3,{\mathbb C})$--oper gives a logarithmic connection on $J^2({\mathbb T})$.
Now we shall give a direct construction of the logarithmic connection on $J^2({\mathbb T})$
associated to a branched ${\rm SO}(3,{\mathbb C})$--oper on $X$.

Let
\begin{equation}\label{e11}
F^1_{\mathbb T}\, \subset\, F^2_{\mathbb T}\, \subset\, J^2({\mathbb T})
\end{equation}
be the filtration of holomorphic subbundles constructed as in \eqref{ef}; so,
$F^1_{\mathbb T}$ is the kernel of the natural projection
\begin{equation}\label{e14}
\textbf{c}_1\, :\, J^2({\mathbb T})\, \longrightarrow\, J^1({\mathbb T})
\end{equation}
(see \eqref{e1}), and $F^2_{\mathbb T}$
is the kernel of the homomorphism
\begin{equation}\label{e13}
\textbf{c}_2\, :\, J^2({\mathbb T})\, \longrightarrow\, {\mathbb T}
\end{equation}
obtained by composing $J^2({\mathbb T})\,\stackrel{\textbf{c}_1}{\longrightarrow}\,
J^1({\mathbb T})$ with the natural homomorphism $J^1({\mathbb T})\, \longrightarrow\, {\mathbb T}$
in \eqref{e1}. Note that the restriction of the filtration in \eqref{e11}
to any point $y\, \in\, \mathbb S$ coincides with the filtration in \eqref{e9}. We have
\begin{equation}\label{f1n}
F^1_{\mathbb T}\,=\, {\mathbb T}\otimes K^2_X\,=\, K_X\otimes{\mathcal O}_X(S)\, , \ \
F^2_{\mathbb T}/F^1_{\mathbb T} \,=\, {\mathbb T}\otimes K_X\,=\, {\mathcal O}_X(S)\, ,\ \
J^2({\mathbb T})/F^2_{\mathbb T}\,=\, {\mathbb T}\, .
\end{equation}

Let
\begin{equation}\label{tr2}
(W,\, B_W,\, \{F^W_i\}_{i=1}^2, \, D_W)
\end{equation}
be a branched $\text{SO}(3,{\mathbb C})$--oper on $X$ of type $S$. Recall from \eqref{w2} and \eqref{e8}
that $W/F^W_2\,=\, TX\otimes {\mathcal O}_X(S)\, =\, {\mathbb T}$.
Let
\begin{equation}\label{q0}
q_0\,:\, W \, \longrightarrow\, {\mathbb T}\,=\,W/F^W_2
\end{equation}
be the quotient map.

Take any point $x\, \in\, X$ and any $w\, \in\, W_x$. Let $\widetilde{w}$ be the unique holomorphic section of
$W$, defined on a simply connected open neighborhood of $x$, such that
$\widetilde{w}$ is flat with respect to the holomorphic connection $D_W$ in \eqref{tr2}, and
\begin{equation}\label{e12}
\widetilde{w}(x)\,=\, w\, .
\end{equation}
Let
\begin{equation}\label{Phi}
\Phi\, :\, W \, \longrightarrow\, J^2({\mathbb T})
\end{equation}
be the homomorphism that sends any $w\, \in\, W_x$, $x\, \in\, X$, to the element of
$J^2({\mathbb T})_x$ given by the restriction of the section $q_0(\widetilde{w})$ to the second order
infinitesimal neighborhood of $x$, where $q_0$ is the homomorphism in \eqref{q0} and $\widetilde{w}$
is constructed as above from $w$. This construction is similar to the construction of
the homomorphisms $\psi_j$ in \eqref{e0}.

\begin{proposition}\label{prop4}\mbox{}
\begin{enumerate}
\item For the homomorphism $\Phi$ in \eqref{Phi},
$$\Phi(F^W_1)\, \subset\, F^1_{\mathbb T}\ \ \ { and } \ \ \ \Phi(F^W_2)\, \subset\, F^2_{\mathbb T}\, $$ where
$\{F^i_{\mathbb T}\}_{i=1}^2$ and $\{F^W_i\}_{i=1}^2$ are the filtrations in \eqref{e11} and \eqref{tr2} respectively.

\item The homomorphism $\Phi$ takes the holomorphic connection $D_W$ in \eqref{tr2}
to a logarithmic connection on $J^2({\mathbb T})$ whose singular locus is $\mathbb S$ in \eqref{e7}.
The logarithmic connection on $J^2({\mathbb T})$ induced by $D_W$ will be denoted by ${\mathbb D}_J$.

\item For any $x_i\, \in\, \mathbb S$ (see \eqref{e7}), the residue ${\rm Res}({\mathbb D}_J, x_i)$
of ${\mathbb D}_J$ at $x_i$ has eigenvalues $\{-2,\, -1,\, 0\}$.

\item The eigenspace of ${\rm Res}({\mathbb D}_J, x_i)$ for the eigenvalue $-2$ is the line 
$(F^1_{\mathbb T})_{x_i}\, \subset\, J^2({\mathbb T})_{x_i}$ in \eqref{e11}.
The eigenspace of ${\rm Res}({\mathbb D}_J, x_i)$ for the eigenvalue $-1$ is contained in the
subspace $(F^2_{\mathbb T})_{x_i}\, \subset\, J^2({\mathbb T})_{x_i}$ in \eqref{e11}.
\end{enumerate}
\end{proposition}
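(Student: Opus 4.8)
The plan is to split the four assertions into two groups. The filtration-compatibility statement (1) is a direct computation with $2$--jets and second fundamental forms. The analytic statements (2)--(4) I would obtain by matching ${\mathbb D}_J$ with the logarithmic connection ${\mathbb D}(P)$ already produced in Proposition \ref{prop3}, passing between the branched ${\rm SO}(3,{\mathbb C})$--oper and the underlying branched projective structure by means of Theorem \ref{thm1}.

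For (1) the first observation is that composing $\Phi$ with the projection ${\textbf c}_2\colon J^2({\mathbb T})\to{\mathbb T}$ in \eqref{e13} simply recovers $q_0$, since the $0$--jet of $q_0(\widetilde w)$ at $x$ equals $q_0(\widetilde w)(x)=q_0(w)$. As $F^W_2\,=\,{\rm kernel}(q_0)$ (see \eqref{q0}) and $F^2_{\mathbb T}\,=\,{\rm kernel}({\textbf c}_2)$, this yields $\Phi(F^W_2)\,\subset\, F^2_{\mathbb T}$ immediately. For the inclusion $\Phi(F^W_1)\,\subset\, F^1_{\mathbb T}$ I would show that, for $w\,\in\,(F^W_1)_x$, the entire $1$--jet of the section $q_0(\widetilde w)$ vanishes at $x$. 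Its value vanishes because $F^W_1\,\subset\, F^W_2$, and its first-order part is governed by the second fundamental form of $F^W_2$ for $D_W$ (the section $\widetilde w$ being $D_W$--flat); the oper condition $D_W(F^W_1)\,\subset\, F^W_2\otimes K_X$ says exactly that this second fundamental form factors through $F^W_2/F^W_1$, and hence annihilates $w$. Thus the $1$--jet of $q_0(\widetilde w)$ is zero and $\Phi(w)$ lies in ${\rm kernel}({\textbf c}_1)\,=\,F^1_{\mathbb T}$ (see \eqref{e14}).

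For (2)--(4) I would begin by noting that over ${\mathbb X}\,=\,X\setminus{\mathbb S}$ the homomorphism $\Phi$ is an isomorphism: there the branched oper restricts to an ordinary ${\rm SO}(3,{\mathbb C})$--oper, and under the identification of Section \ref{sec4} the construction of $\Phi$ coincides with the trivialization of $J^2(T{\mathbb X})$ by the $2$--jets of the projective vector fields, which is an isomorphism by Lemma \ref{lem1}. Consequently $\Phi$ conjugates $D_W$ into a holomorphic connection on $J^2({\mathbb T})\vert_{\mathbb X}$ that, by the same identification, is the connection ${\mathbb D}({\mathcal P})$ attached to the projective structure ${\mathcal P}$ underlying the oper (Theorem \ref{thm1}). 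Since the fixed holomorphic bundle $J^2({\mathbb T})$ carries at most one meromorphic connection restricting to a given connection on ${\mathbb X}$, the connection ${\mathbb D}_J$ determined by $\Phi$ must agree with the logarithmic extension ${\mathbb D}(P)$ of Proposition \ref{prop3}. Statements (2), (3) and (4) then follow verbatim from that proposition.

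The delicate point is precisely this identification ${\mathbb D}_J\,=\,{\mathbb D}(P)$ across the branch points, i.e. the verification that the meromorphic connection produced by the globally defined but fibrewise degenerate map $\Phi$ is genuinely \emph{logarithmic} there, with the asserted residues. I expect this to be the main obstacle. Near a point $x_i$ one has $\det\Phi$ vanishing to order three (consistent with $\det W\,=\,{\mathcal O}_X$ and $\det J^2({\mathbb T})\,=\,{\mathcal O}_X(3S)$), and the interplay between this degeneration and the flag from (1) is exactly what the residue has to encode. Rather than compute the connection matrix in a local jet frame — where the branched change of trivialization ${\mathbb T}\,=\,(TX)\otimes{\mathcal O}_X(S)$ makes the bookkeeping awkward — I would route the argument through the local branched double cover $\gamma$ of \eqref{e10} exactly as in the proof of Proposition \ref{prop3}: pull back the unbranched oper upstairs, apply the comparison homomorphism \eqref{p1}, and invoke the residue-shift relation between the logarithmic connections on $V$ and ${\mathcal W}$ in \eqref{ed}. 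This reproduces the eigenvalues $\{-2,\,-1,\,0\}$ together with the eigenline assignments, and at the same time confirms ${\mathbb D}_J\,=\,{\mathbb D}(P)$, thereby completing (2)--(4).
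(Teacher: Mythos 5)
Your proposal is correct and takes essentially the same route as the paper: part (1) by the same jet computation (the $0$--jet vanishes because $F^W_2\,=\,\mathrm{kernel}(q_0)$, and the oper condition $D_W(F^W_1)\,\subset\, F^W_2\otimes K_X$ kills the first-order term, which you correctly identify with the second fundamental form of $F^W_2$), and parts (2)--(4) by identifying ${\mathbb D}_J\vert_{\mathbb X}$ with ${\mathbb D}({\mathcal P})$ via Theorem \ref{thm1} and quoting Proposition \ref{prop3}. The only divergence is your final paragraph: the ``delicate point'' you flag there is already disposed of by your own uniqueness-of-meromorphic-extension observation (two meromorphic connections on the fixed bundle $J^2({\mathbb T})$ that agree on the dense open subset $\mathbb X$ coincide, so ${\mathbb D}_J$ inherits logarithmicity and the residue data from ${\mathbb D}(P)$), which is exactly how the paper argues, so the proposed detour through the local double cover of \eqref{e10} would merely re-prove Proposition \ref{prop3} and can be omitted.
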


\begin{proof}
To prove statement (1), first take any $x\, \in\, X$ and any $w\, \in\, (F^W_2)_x$.
Since $q_0(w)\,=\, 0$, where $q_0$ is the projection in \eqref{q0}, from \eqref{e12} it
follows immediately that $\textbf{c}_2\circ\Phi(w)\,=\, 0$, where $\textbf{c}_2$ is the homomorphism
in \eqref{e13}. This implies that $\Phi(F^W_2)\, \subset\, F^2_{\mathbb T}$. To prove statement (1),
we need to show that $\Phi(F^W_1)\, \subset\, F^1_{\mathbb T}$.

Take any $x\, \in\, X$ and any $w\, \in\, (F^W_1)_x$. From the given
condition that $D_W(F^W_1)\, \subset\, F^W_2\otimes K_X$ it follows that $\textbf{c}_1\circ\Phi(w)\,=\,
0$, where $\textbf{c}_1$ is the homomorphism in \eqref{e14}. More precisely, the restriction
$\Phi\vert_{F^W_1}$ coincides with the natural inclusion homomorphism
$$
F^W_1\,=\, K_X\otimes {\mathcal O}_X(-S)\, \hookrightarrow\, K_X\otimes {\mathcal O}_X(S)\,=\,
F^1_{\mathbb T}\, ;
$$
see \eqref{f1n} for $K_X\otimes{\mathcal O}_X(S)\,=\,
F^1_{\mathbb T}$ and Definition \ref{def2} for $F^W_1\,=\, K_X\otimes{\mathcal O}_X(-S)$.
Consequently, we have
$\Phi(F^W_1)\, \subset\, F^1_{\mathbb T}$. This proves (1).

Denote $X\setminus {\mathbb S}$ by $\mathbb X$.
We note that the restriction
$$
\Phi\vert_{\mathbb X}\, :\, W\vert_{\mathbb X} \, \longrightarrow\, J^2({\mathbb T})\vert_{\mathbb X}
\,=\, J^2(T{\mathbb X})
$$
is a holomorphic isomorphism.

{}From Theorem \ref{thm1} we know that the branched
$\text{SO}(3,{\mathbb C})$--oper $(W,\, B_W,\, \{F^W_i\}_{i=1}^2, \, D_W)$
in \eqref{tr2} defines a branched projective structure on $X$ of type $S$. Let $\mathcal P$ denote the
projective structure on ${\mathbb X}\,=\, X\setminus \mathbb S$ given by this
branched projective structure on $X$. Using Proposition
\ref{prop1}(1), the projective structure $\mathcal P$ yields a holomorphic connection ${\mathbb
D} ({\mathcal P})$ on $J^2(T{\mathbb X})$. This holomorphic connection ${\mathbb
D} ({\mathcal P})$ evidently coincides with the holomorphic connection ${\mathbb D}_J\vert_{\mathbb X}$
on $J^2({\mathbb T})\vert_{\mathbb X}\,=\, J^2(T{\mathbb X})$ given by the connection $D_W$
using the isomorphism $\Phi\vert_{\mathbb X}$. Consequently, the statements
(2), (3) and (4) in the proposition follow from Proposition \ref{prop3} and statement (1).
\end{proof}

Proposition \ref{prop4} yields the following:

\begin{corollary}\label{cor-1}
For the logarithmic connection ${\mathbb D}_J$ on $J^2({\mathbb T})$ in Proposition \ref{prop4}(2),
$$
{\mathbb D}_J(F^1_{\mathbb T})\, = \, F^2_{\mathbb T}\otimes K_X\otimes{\mathcal O}_X(S)\, ,
$$
and
$$
{\mathbb D}_J(F^2_{\mathbb T})\, =\, J^2({\mathbb T})\otimes K_X\otimes{\mathcal O}_X(S)\, ,
$$
where $\{F^i_{\mathbb T}\}_{i=1}^2$ is the filtration in \eqref{e11}.
\end{corollary}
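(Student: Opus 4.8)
The plan is to read both displayed formulas, exactly as in the unbranched computation of Section~\ref{se2.4}, as statements about the two second fundamental forms of the filtration $F^1_{\mathbb T}\subset F^2_{\mathbb T}\subset J^2({\mathbb T})$ for the logarithmic connection ${\mathbb D}_J$, pinned down first over the dense open ${\mathbb X}=X\setminus{\mathbb S}$ and then at the points of ${\mathbb S}$. As recorded in the last paragraph of the proof of Proposition~\ref{prop4}, the restriction ${\mathbb D}_J\vert_{\mathbb X}$ coincides, under the isomorphism $\Phi\vert_{\mathbb X}$, with the oper connection ${\mathbb D}({\mathcal P})$ on $J^2(T{\mathbb X})=J^2({\mathbb T})\vert_{\mathbb X}$. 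Hence the defining properties of an oper connection (Definition~\ref{def1}), established for the model connection in Section~\ref{se2.4}, apply verbatim on ${\mathbb X}$ and give ${\mathbb D}_J(F^1_{\mathbb T})\vert_{\mathbb X}=(F^2_{\mathbb T}\otimes K_X)\vert_{\mathbb X}$ and ${\mathbb D}_J(F^2_{\mathbb T})\vert_{\mathbb X}=(J^2({\mathbb T})\otimes K_X)\vert_{\mathbb X}$; since ${\mathcal O}_X(S)$ is canonically trivial away from ${\mathbb S}$, these are the asserted equalities over ${\mathbb X}$.

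First I would establish the two inclusions over all of $X$. The inclusion ${\mathbb D}_J(F^2_{\mathbb T})\subseteq J^2({\mathbb T})\otimes K_X\otimes{\mathcal O}_X(S)$ is automatic from the definition of a logarithmic connection. For ${\mathbb D}_J(F^1_{\mathbb T})\subseteq F^2_{\mathbb T}\otimes K_X\otimes{\mathcal O}_X(S)$ it suffices to show that the component of the second fundamental form of $F^1_{\mathbb T}$ landing in $(J^2({\mathbb T})/F^2_{\mathbb T})\otimes K_X\otimes{\mathcal O}_X(S)$ is zero. This component is a global holomorphic section of a line bundle that vanishes on the dense open ${\mathbb X}$ by the previous paragraph, so it vanishes identically. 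Equivalently, by Proposition~\ref{prop4}(3)--(4) the residue ${\rm Res}({\mathbb D}_J,x_i)$ has the three distinct eigenvalues $-2,-1,0$ with eigenspaces spanning the flag $(F^1_{\mathbb T})_{x_i}\subset (F^2_{\mathbb T})_{x_i}\subset J^2({\mathbb T})_{x_i}$, so it preserves $(F^2_{\mathbb T})_{x_i}$, which is exactly the compatibility of ${\mathbb D}_J$ with the subbundle $F^2_{\mathbb T}$ at $x_i$.

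Next I would show that neither image is contained in the next smaller subbundle. Using the identifications \eqref{f1n}, the second fundamental form of $F^1_{\mathbb T}$ is a section of ${\rm Hom}(F^1_{\mathbb T},F^2_{\mathbb T}/F^1_{\mathbb T})\otimes K_X\otimes{\mathcal O}_X(S)={\mathcal O}_X(S)$, and under $\Phi\vert_{\mathbb X}$ together with Theorem~\ref{thm1} it agrees with the second fundamental form of $F^W_1$ for $D_W$, which by Definition~\ref{def3} is the section given by the constant function $1$; likewise the $(J^2({\mathbb T})/F^2_{\mathbb T})$-valued second fundamental form of $F^2_{\mathbb T}$ equals the section $1$ of ${\mathcal O}_X(S)$ by the remark following Definition~\ref{def4}. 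As these are nonzero sections, ${\mathbb D}_J(F^1_{\mathbb T})\not\subseteq F^1_{\mathbb T}\otimes K_X\otimes{\mathcal O}_X(S)$ and ${\mathbb D}_J(F^2_{\mathbb T})\not\subseteq F^2_{\mathbb T}\otimes K_X\otimes{\mathcal O}_X(S)$. Combined with the inclusions, the smallest subbundles through which ${\mathbb D}_J$ carries $F^1_{\mathbb T}$ and $F^2_{\mathbb T}$ are $F^2_{\mathbb T}$ and $J^2({\mathbb T})$ respectively, which is the content of the two equalities.

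The hard part will be the behaviour exactly at the branch points $x_i$: there the second fundamental forms, being the section $1$ of ${\mathcal O}_X(S)$, vanish to first order, so the image subsheaves drop rank and the two equalities must be understood at the level of saturated subbundles rather than of subsheaves. To be safe I would confirm this degeneration directly in the local model of the proof of Proposition~\ref{prop3}, pulling back the standard oper on ${\mathbb D}_1$ along the double cover $\gamma(z)=z^2$ and checking, with the residue normalisation of Proposition~\ref{prop4}, that the nonzero residue eigenvalues ($-2$ on $F^1_{\mathbb T}$, and $-2,-1$ on $F^2_{\mathbb T}$) indeed force ${\mathbb D}_J$ to move $F^1_{\mathbb T}$ into $F^2_{\mathbb T}$ and $F^2_{\mathbb T}$ into $J^2({\mathbb T})$ at $x_i$ as well.
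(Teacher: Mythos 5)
Your argument is correct, and it reaches the conclusion by a somewhat different mechanism than the paper does. The paper's proof is two lines: since ${\mathbb D}_J$ is induced from $D_W$ via $\Phi$ (Proposition \ref{prop4}(2)), it deduces the two displayed statements from the filtration compatibility $\Phi(F^W_i)\,\subset\, F^i_{\mathbb T}$ of Proposition \ref{prop4}(1) together with the residue eigenvalue/eigenspace data of Proposition \ref{prop4}(3)--(4); that is, the points of ${\mathbb S}$ are handled by residue considerations. You handle them instead by the identity theorem: the obstruction to ${\mathbb D}_J(F^1_{\mathbb T})\subset F^2_{\mathbb T}\otimes K_X\otimes{\mathcal O}_X(S)$ is an ${\mathcal O}_X$-linear map into a line bundle which vanishes on the dense open subset ${\mathbb X}$, hence vanishes identically, and the ``equality'' part follows because the second fundamental forms are nonzero (indeed equal to the section $1$ of ${\mathcal O}_X(S)$). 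This is precisely the density mechanism the paper itself uses immediately afterwards to prove Lemma \ref{lem8} and Proposition \ref{prop5}; in effect you establish Lemma \ref{lem8}'s content on the way to Corollary \ref{cor-1}, whereas the paper proves Corollary \ref{cor-1} first (via residues) and Lemma \ref{lem8} after, and no circularity results from your ordering since your containments are proved before you invoke the second fundamental forms. A genuine merit of your write-up is the closing observation: because the second fundamental forms vanish at the points of ${\mathbb S}$, the displayed identities cannot be literal equalities of image subsheaves at the $x_i$, and must be read --- as you do --- as saying that $F^2_{\mathbb T}$, respectively $J^2({\mathbb T})$, is the smallest saturated subsheaf $V$ with ${\mathbb D}_J(F^1_{\mathbb T})\subset V\otimes K_X\otimes{\mathcal O}_X(S)$, respectively ${\mathbb D}_J(F^2_{\mathbb T})\subset V\otimes K_X\otimes{\mathcal O}_X(S)$; the paper leaves this reading implicit. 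One caution: your parenthetical ``equivalently'' via residues is not by itself a substitute for the density argument, since invariance of the fibre $(F^2_{\mathbb T})_{x_i}$ under ${\rm Res}({\mathbb D}_J,x_i)$ is weaker than the sheaf-level inclusion near $x_i$; but as you offer it only as an aside and your identity-theorem argument is complete, nothing is lost.
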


\begin{proof}
{}From Proposition \ref{prop4}(2) we know that the logarithmic connection
${\mathbb D}_J$ is given by the connection $D_W$
using $\Phi$. Consequently, the corollary follows from Proposition \ref{prop4}(1) and the
properties, given in Proposition \ref{prop4}(3) and Proposition \ref{prop4}(4),
of the residue of the logarithmic connection ${\mathbb D}_J$.
\end{proof}

Recall the second fundamental form for a logarithmic connection
defined in Section \ref{se5.2}. Consider the logarithmic connection
${\mathbb D}_J$ on $J^2({\mathbb T})$ in Proposition \ref{prop4}(2)
and the subbundles $F^1_{\mathbb T},\, F^2_{\mathbb T}$ in \eqref{e11}.
Let
$$
\textbf{S}({\mathbb D}_J, F^1_{\mathbb T}) \ \ \ \text{ and }\ \ \ 
\textbf{S}({\mathbb D}_J, F^1_{\mathbb T})
$$
be the second fundamental forms of $F^1_{\mathbb T}$ and $F^2_{\mathbb T}$ respectively for ${\mathbb D}_J$.

{}From Corollary \ref{cor-1} and \eqref{j1n} we know that
\begin{equation}\label{h-1}
\textbf{S}({\mathbb D}_J, F^1_{\mathbb T})\, \in\, H^0(X,\, \text{Hom}(F^1_{\mathbb T},\,
F^2_{\mathbb T}/F^1_{\mathbb T})\otimes K_X\otimes{\mathcal O}_X(S))\,=\, H^0(X,\, {\mathcal O}_X(S))\, ;
\end{equation}
see \eqref{f1n}. From Corollary \ref{cor-1} and \eqref{j2n} we have
\begin{equation}\label{h-2}
\textbf{S}({\mathbb D}_J, F^2_{\mathbb T})\, \in\, H^0(X,\, \text{Hom}(F^2_{\mathbb T}/F^1_{\mathbb T},\,
J^2({\mathbb T})/F^2_{\mathbb T})\otimes K_X\otimes{\mathcal O}_X(S))\,=\, H^0(X,\, {\mathcal O}_X(S))\, ;
\end{equation}
see \eqref{f1n}. 

\begin{lemma}\label{lem8}
The second fundamental forms ${\bf S}({\mathbb D}_J, F^1_{\mathbb T})$ and
${\bf S}({\mathbb D}_J, F^2_{\mathbb T})$, in \eqref{h-1} and \eqref{h-2} respectively,
coincide with the section of ${\mathcal O}_X(S)$
given by the constant function $1$ on $X$.
\end{lemma}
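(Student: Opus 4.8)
The plan is to reduce everything to the unbranched situation on $\mathbb X\,=\,X\setminus\mathbb S$, where both second fundamental forms are already the constant function $1$, and then to extend across the branch points by a density argument for holomorphic sections of a line bundle.

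First I would invoke the last part of the proof of Proposition \ref{prop4}: the restriction $\Phi\vert_{\mathbb X}\,\colon\,W\vert_{\mathbb X}\,\longrightarrow\,J^2(T{\mathbb X})$ is a holomorphic isomorphism carrying the connection $D_W$ to the connection ${\mathbb D}({\mathcal P})$ that Proposition \ref{prop1}(1) associates to the ordinary projective structure $\mathcal P$ induced on $\mathbb X$. Under this isomorphism the filtration $\{F^i_{\mathbb T}\}_{i=1}^2$ restricts to the canonical filtration $K_{\mathbb X}\subset F_2\subset J^2(T{\mathbb X})$ of \eqref{ef}, and ${\mathcal O}_X(S)$ restricts to ${\mathcal O}_{\mathbb X}$. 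By (the proof of) Lemma \ref{lemm0}, the connection ${\mathbb D}({\mathcal P})$ is an oper connection; hence the two final conditions of Definition \ref{def1} say precisely that the second fundamental forms of $K_{\mathbb X}$ and of $F_2$ for ${\mathbb D}({\mathcal P})$ are both the constant function $1$ on $\mathbb X$.

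Next I would check that the canonical identifications of the two Hom-bundles with ${\mathcal O}_X(S)$ in \eqref{h-1} and \eqref{h-2} restrict over $\mathbb X$ to the canonical identifications of the corresponding Hom-bundles with ${\mathcal O}_{\mathbb X}$ used in \eqref{j1} and \eqref{j2}. This is immediate from the naturality of the jet exact sequences \eqref{e1}, the compatibility of the filtration $\{F^i_{\mathbb T}\}_{i=1}^2$ with $\{F_i\}$ noted above, and the equality ${\mathbb T}\vert_{\mathbb X}\,=\,T{\mathbb X}$. Granting this, the restrictions ${\bf S}({\mathbb D}_J, F^1_{\mathbb T})\vert_{\mathbb X}$ and ${\bf S}({\mathbb D}_J, F^2_{\mathbb T})\vert_{\mathbb X}$ are exactly the second fundamental forms computed in the previous step, that is, the constant function $1$ under ${\mathcal O}_X(S)\vert_{\mathbb X}\,=\,{\mathcal O}_{\mathbb X}$.

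Finally I would conclude by the identity theorem for holomorphic sections: the section ${\bf S}({\mathbb D}_J, F^1_{\mathbb T})$ and the image of the constant function $1$ are both global holomorphic sections of the line bundle ${\mathcal O}_X(S)$, and they agree on the dense open subset $\mathbb X$, so they agree on all of $X$; the same applies to ${\bf S}({\mathbb D}_J, F^2_{\mathbb T})$. A priori these second fundamental forms are only sections of ${\mathcal O}_X(S)$ and could have simple poles along $S$, so the content of this last step is that agreement with $1$ on $\mathbb X$ forces them to be holomorphic and equal to $1$ everywhere. The only genuinely delicate point, and the main obstacle, is the bookkeeping in the middle paragraph, namely verifying that the canonical trivializations of the two Hom-bundles match the unbranched ones of Section \ref{se2.4} upon restriction to $\mathbb X$; once this is confirmed, the density argument finishes the proof.
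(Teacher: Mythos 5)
Your proposal is correct and follows essentially the same route as the paper: reduce to the induced (unbranched) projective structure on $\mathbb X = X\setminus \mathbb S$ via the identification $\Phi\vert_{\mathbb X}$, use the oper-connection property of ${\mathbb D}(\mathcal P)$ (the paper cites Corollary \ref{cor1}(1), which rests on the same fact from Lemma \ref{lemm0} that you invoke) to see both second fundamental forms restrict to the constant function $1$ over $\mathbb X$, and then conclude by density of $\mathbb X$ in $X$, since both sides are holomorphic sections of the line bundle ${\mathcal O}_X(S)$. Your middle paragraph merely spells out the compatibility of trivializations that the paper leaves implicit, so no substantive difference remains.
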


\begin{proof}
As in the proof of Proposition \ref{prop4}, consider the branched projective structure on $X$ of type $S$
given by the $\text{SO}(3,{\mathbb C})$--oper $(W,\, B_W,\, \{F^W_i\}_{i=1}^2, \, D_W)$
in \eqref{tr2} using Theorem \ref{thm1}. It defines a projective structure on
${\mathbb X}\,=\, X\setminus \mathbb S$. Now from the statement (1) in Corollary \ref{cor1} we conclude
that the restrictions to ${\mathbb X}$ of both ${\bf S}({\mathbb D}_J, F^1_{\mathbb T})$ and
${\bf S}({\mathbb D}_J, F^2_{\mathbb T})$ coincide with the section of ${\mathcal O}_{\mathbb X}$
given by the constant function $1$ on ${\mathbb X}$. Hence the second fundamental forms
${\bf S}({\mathbb D}_J, F^1_{\mathbb T})$ and
${\bf S}({\mathbb D}_J, F^2_{\mathbb T})$ coincide with the section of ${\mathcal O}_X(S)$
given by the constant function $1$ on $X$, because ${\mathbb X}$ is a dense open subset of $X$.
\end{proof}

\subsection{A twisted symmetric form}\label{se6.4}

We continue with the set-up of Section \ref{se6.3}.

Using the homomorphism $\Phi$ in \eqref{Phi}, the nondegenerate symmetric form $B_W$ on
$W$ in \eqref{tr2} produces a nondegenerate symmetric form
\begin{equation}\label{bj}
\mathbb{B}_J\, \in\, H^0(X,\, \text{Sym}^2(J^2({\mathbb T})^*)\otimes{\mathcal O}_X(2S))\, .
\end{equation}
This follows from the fact that the image of the following composition of homomorphisms
$$
J^2({\mathbb T})^* \, \stackrel{\Phi^*}{\longrightarrow}\, W^*\, \stackrel{\sim}{\longrightarrow}\,
W \, \stackrel{\Phi}{\longrightarrow}\, J^2({\mathbb T})
$$
coincides with the subsheaf $J^2({\mathbb T})\otimes{\mathcal O}_X(-2S) \, \subset\,
J^2({\mathbb T})$; the above isomorphism
$W^*\, \stackrel{\sim}{\longrightarrow}\, W$ is given by the nondegenerate symmetric form $B_W$.

The logarithmic connection ${\mathbb D}_J$ on $J^2({\mathbb T})$ in Proposition \ref{prop4}(2), and the
canonical logarithmic connection on ${\mathcal O}_X(2S)$ defined by the de Rham differential, together
define a logarithmic connection on the vector bundle $\text{Sym}^2(J^2({\mathbb T})^*)
\otimes{\mathcal O}_X(2S)$. This logarithmic connection on $\text{Sym}^2(J^2({\mathbb T})^*)
\otimes{\mathcal O}_X(2S)$ will be denoted by $\text{Sym}^2({\mathbb D}_J)'$.

\begin{proposition}\label{prop5}\mbox{}
\begin{enumerate}
\item The form $\mathbb{B}_J$ in \eqref{bj} is covariant constant with respect to the above
logarithmic connection ${\rm Sym}^2({\mathbb D}_J)'$ on the vector bundle
${\rm Sym}^2(J^2({\mathbb T})^*)\otimes{\mathcal O}_X(2S)$.

\item For the subbundles $F^1_{\mathbb T},\, F^2_{\mathbb T}$ in \eqref{e11},
$$
\mathbb{B}_J(F^1_{\mathbb T}\otimes F^1_{\mathbb T})\,=\, 0\ \ { and }\ \
(F^1_{\mathbb T})^\perp \,=\, F^2_{\mathbb T}\, ,
$$
where $(F^1_{\mathbb T})^\perp\, \subset\, J^2({\mathbb T})$ is the subbundle orthogonal to
$F^1_{\mathbb T}$ with respect to $\mathbb{B}_J$.
\end{enumerate}
\end{proposition}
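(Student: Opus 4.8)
The plan is to reduce every assertion to the dense open subset $\mathbb{X}\,=\,X\setminus\mathbb{S}$ and then extend to the branch points by holomorphicity. Recall from the proof of Proposition \ref{prop4} that the restriction $\Phi\vert_{\mathbb{X}}\,:\,W\vert_{\mathbb{X}}\,\longrightarrow\,J^2(T\mathbb{X})$ is an isomorphism carrying the holomorphic connection $D_W$ to $\mathbb{D}_J\vert_{\mathbb{X}}$. Over $\mathbb{X}$ the line bundle $\mathcal{O}_X(2S)$ is canonically trivial, and under this trivialization the defining composition of $\mathbb{B}_J$ in \eqref{bj} gives, for local sections $u,\,v$ of $J^2(\mathbb{T})\vert_{\mathbb{X}}$, the identity $\mathbb{B}_J(u,\,v)\,=\,B_W(\Phi^{-1}u,\,\Phi^{-1}v)$. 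Thus over $\mathbb{X}$ the isomorphism $\Phi$ transports the triple $(J^2(\mathbb{T})\vert_{\mathbb{X}},\,\mathbb{B}_J\vert_{\mathbb{X}},\,\mathbb{D}_J\vert_{\mathbb{X}})$ to $(W\vert_{\mathbb{X}},\,B_W\vert_{\mathbb{X}},\,D_W\vert_{\mathbb{X}})$.

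To prove statement (1), I would observe that $\Phi\vert_{\mathbb{X}}$ induces an isomorphism of $\text{Sym}^2(J^2(\mathbb{T})^*)\vert_{\mathbb{X}}$ with $\text{Sym}^2(W^*)\vert_{\mathbb{X}}$ intertwining the connections induced by $\mathbb{D}_J$ and $D_W$, and sending $\mathbb{B}_J\vert_{\mathbb{X}}$ to $B_W\vert_{\mathbb{X}}$ (the factor $\mathcal{O}_X(2S)$ carries its de Rham connection, which is the trivial one over $\mathbb{X}$). Since $D_W$ preserves $B_W$ by Definition \ref{def3}, it follows that the covariant derivative $\text{Sym}^2(\mathbb{D}_J)'(\mathbb{B}_J)$ vanishes over $\mathbb{X}$. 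But this covariant derivative is a holomorphic section over all of $X$ of the bundle $\text{Sym}^2(J^2(\mathbb{T})^*)\otimes\mathcal{O}_X(2S)\otimes K_X\otimes\mathcal{O}_X(S)$; vanishing on the dense open subset $\mathbb{X}$, it vanishes identically, which is statement (1).

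For the first assertion in statement (2), recall from the proof of Proposition \ref{prop4}(1), together with the fact that $\Phi\vert_{\mathbb{X}}$ is an isomorphism, that $\Phi(F^W_1)$ and $\Phi(F^W_2)$ coincide with $F^1_{\mathbb{T}}$ and $F^2_{\mathbb{T}}$ respectively over $\mathbb{X}$. Hence the holomorphic section $\mathbb{B}_J(F^1_{\mathbb{T}}\otimes F^1_{\mathbb{T}})$ of $\mathcal{O}_X(2S)$ equals $B_W(F^W_1\otimes F^W_1)$ over $\mathbb{X}$, which vanishes since $B_W(F^W_1\otimes F^W_1)\,=\,0$ in the branched filtration (Definition \ref{def2}); being holomorphic it vanishes on all of $X$. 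The same density argument applied to $\mathbb{B}_J(F^1_{\mathbb{T}}\otimes F^2_{\mathbb{T}})$, using $(F^W_1)^\perp\,=\,F^W_2$, gives the inclusion $F^2_{\mathbb{T}}\,\subseteq\,(F^1_{\mathbb{T}})^\perp$.

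The main obstacle is to upgrade this inclusion to the equality $(F^1_{\mathbb{T}})^\perp\,=\,F^2_{\mathbb{T}}$ at the branch points, where $\mathbb{B}_J$ is only a twisted form. Here I would invoke the nondegeneracy built into the construction preceding the proposition: the composition $J^2(\mathbb{T})^*\,\longrightarrow\,W^*\,\longrightarrow\,W\,\longrightarrow\,J^2(\mathbb{T})$ maps $J^2(\mathbb{T})^*$ isomorphically onto the subsheaf $J^2(\mathbb{T})\otimes\mathcal{O}_X(-2S)$, so the homomorphism $J^2(\mathbb{T})\,\longrightarrow\,J^2(\mathbb{T})^*\otimes\mathcal{O}_X(2S)$ associated with $\mathbb{B}_J$ is an isomorphism of vector bundles over all of $X$. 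Composing it with the surjection $J^2(\mathbb{T})^*\otimes\mathcal{O}_X(2S)\,\longrightarrow\,(F^1_{\mathbb{T}})^*\otimes\mathcal{O}_X(2S)\,=\,TX\otimes\mathcal{O}_X(S)$ dual to the inclusion $F^1_{\mathbb{T}}\,\hookrightarrow\,J^2(\mathbb{T})$ produces a surjective bundle homomorphism whose kernel is precisely $(F^1_{\mathbb{T}})^\perp$; surjectivity forces this kernel to be a rank-two subbundle of $J^2(\mathbb{T})$. Since $F^2_{\mathbb{T}}$ is a rank-two subbundle contained in it, the two coincide, completing statement (2).
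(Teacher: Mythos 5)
Your proof is correct and follows essentially the same route as the paper's: both statements are verified over the dense open subset $\mathbb{X}\,=\,X\setminus\mathbb{S}$, where $\Phi$ identifies $(\mathbb{B}_J,\,\mathbb{D}_J)$ with $(B_W,\,D_W)$, and are then extended to all of $X$ by holomorphicity. Your additional argument using the fiberwise nondegeneracy of $\mathbb{B}_J$ to see that $(F^1_{\mathbb{T}})^\perp$ is a genuine rank-two subbundle --- so that the equality $(F^1_{\mathbb{T}})^\perp\,=\,F^2_{\mathbb{T}}$, and not merely the inclusion $F^2_{\mathbb{T}}\,\subseteq\,(F^1_{\mathbb{T}})^\perp$, persists across the branch points --- makes explicit a point that the paper's one-line proof leaves to the reader.
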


\begin{proof}
This proposition can be proved exactly as Lemma \ref{lem8} is proved. Indeed, from
the statement (1) in Corollary \ref{cor1} we know that both the statements in the proposition
holds over ${\mathbb X}\,=\, X\setminus \mathbb S$. Hence the proposition follows.
\end{proof}

Consider the logarithmic connection ${\mathbb D}_J$ in Proposition \ref{prop4}(2) and the twisted
holomorphic symmetric
bilinear form $\mathbb{B}_J$ in \eqref{bj} on the vector bundle
$J^2({\mathbb T})$ constructed from the branched $\text{SO}(3,{\mathbb C})$--oper
$(W,\, B_W,\, \{F^W_i\}_{i=1}^2, \, D_W)$ in \eqref{tr2}. We will now show that $(W,\, B_W,\, \{F^W_i\}_{i=1}^2,
\, D_W)$ can be reconstructed back from this pair
\begin{equation}\label{cp}
(\mathbb{B}_J,\, {\mathbb D}_J)\, .
\end{equation}

For each point $x_i \in \, \mathbb S$ (see \eqref{e7}), let $L_i\, \subset\, J^2({\mathbb T})_{x_i}$ be the
eigenspace for the eigenvalue $0$ of the residue ${\rm Res}({\mathbb D}_J, x_i)$. Let $\mathcal F$ be the
holomorphic vector bundle on $X$ that fits in the short exact sequence
\begin{equation}\label{g1}
0\, \longrightarrow\, {\mathcal F} \, \longrightarrow\, J^2({\mathbb T})\, \longrightarrow\,
\bigoplus_{i=1}^d J^2({\mathbb T})_{x_i}/L_i \, \longrightarrow\, 0
\end{equation}
of coherent analytic sheaves on $X$. Recall the criterion for a logarithmic connection on the vector
bundle $\mathcal W$ in \eqref{ed} to induce a logarithmic connection on the vector
bundle $V$ in \eqref{ed}. Applying this criterion to \eqref{g1}
it follows that ${\mathbb D}_J$ induces a logarithmic connection
${\mathbb D}'_J$ on $\mathcal F$. Moreover, the eigenvalues of the residue ${\rm Res}({\mathbb D}'_J, x_i)$
at $x_i\, \in\, {\mathbb S}$ are $(-1,\, 0,\, 0)$; this again follows from the expression of the residue,
of the logarithmic connection on the vector
bundle $V$ in \eqref{ed} induced by a logarithmic connection on $\mathcal W$, in terms of the
residue of the logarithmic connection on $\mathcal W$.

For each point $x_i \in \, \mathbb S$, let $M_i\, \subset\, {\mathcal F}_{x_i}$ be the
eigenspace for the eigenvalue $0$ of the residue ${\rm Res}({\mathbb D}'_J, x_i)$. Let $\mathcal E$ be the
holomorphic vector bundle on $X$ that fits in the short exact sequence
\begin{equation}\label{g2}
0\, \longrightarrow\, {\mathcal E} \, \longrightarrow\, {\mathcal F}\, \longrightarrow\,
\bigoplus_{i=1}^d {\mathcal F}_{x_i}/M_i \, \longrightarrow\, 0
\end{equation}
of coherent analytic sheaves on $X$. From the above mentioned criterion
it follows that the logarithmic connection ${\mathbb D}'_J$ induces a logarithmic connection
${\mathbb D}''_J$ on $\mathcal E$. Moreover, the eigenvalues of the residue ${\rm Res}({\mathbb D}''_J, x_i)$
at any $x_i\, \in\, {\mathbb S}$ are $(0,\, 0,\, 0)$. 

Using the homomorphism $\Phi$ in \eqref{Phi}, consider $W$ as a subsheaf of $J^2({\mathbb T})$.
On the other hand, from \eqref{g1} and \eqref{g2} we have ${\mathcal E}\, \subset\,
{\mathcal F}\, \subset\,J^2({\mathbb T})$, using which ${\mathcal E}$ will be considered as
a subsheaf of $J^2({\mathbb T})$.
It is straightforward to check that ${\mathcal E}\, \subset\, J^2({\mathbb T})$
coincides with the subsheaf $W$ of $J^2({\mathbb T})$. This identification between ${\mathcal E}$ and $W$ takes
${\mathbb D}''_J$ to the holomorphic connection $D_W$ in \eqref{tr2}. In particular,
the logarithmic connection ${\mathbb D}''_J$ is actually a nonsingular connection on ${\mathcal E}$.

The nondegenerate twisted symmetric form $\mathbb{B}_J$ on $J^2({\mathbb T})$ in \eqref{bj}
produces a twisted symmetric form on the subsheaf ${\mathcal E}\, \subset\, J^2({\mathbb T})$.
The above identification between ${\mathcal E}$ and $W$ takes this twisted symmetric form on $\mathcal E$
given by $\mathbb{B}_J$ to $B_W$ in \eqref{tr2}. In particular, the twisted symmetric form on $\mathcal E$
given by $\mathbb{B}_J$ is nondegenerate and there is no nontrivial twisting.

The filtration $\{F^W_i\}_{i=1}^2$ of $W\,=\, {\mathcal E}$ in \eqref{tr2}
is given by the filtration $\{F^i_{\mathbb T}\}_{i=1}^2$ of $J^2({\mathbb T})$ in \eqref{e11}. In other words,
$F^W_i$ is the unique holomorphic subbundle of ${\mathcal E}$ such that the space of holomorphic sections
of $F^W_i$ over any open subset $U\, \subset\, X$ is the space of holomorphic sections $s$ of ${\mathcal E}\vert_U$
such that $s\vert_{U\cap (X\setminus \mathbb S)}$ is a section of $F^i_{\mathbb T}$ over
$U\cap (X\setminus \mathbb S)$.

This way we recover the branched $\text{SO}(3,{\mathbb C})$--oper
$(W,\, B_W,\, \{F^W_i\}_{i=1}^2, \, D_W)$ in \eqref{tr2} from the pair $(\mathbb{B}_J,\, {\mathbb D}_J)$
in \eqref{cp} constructed from it.

\section{A characterization of branched $\text{SO}(3,{\mathbb C})$-opers}\label{sec7}

Let ${\mathfrak g}\,=\, \text{Lie}(\text{SO}(3,{\mathbb C}))$ be the Lie algebra of
$\text{SO}(3,{\mathbb C})$. We will need a property of ${\mathfrak g}$ which is formulated below.

Up to conjugacy, there is only one nonzero nilpotent element in ${\mathfrak g}$. Indeed,
this follows immediately from the fact that ${\mathfrak g}\,=\, \mathfrak{sl}(2, {\mathbb C})$.
Let
$$
A\, \in\, {\mathfrak g}\,=\, \text{Lie}(\text{SO}(3,{\mathbb C}))
$$
be a nonzero nilpotent element. From the above observation we know that $\dim A({\mathbb C}^3)\,=\, 2$.
Therefore, if $B\, \in\, {\mathfrak g}$ is a nilpotent element such that
$B(V_0)\,=\, 0$, where $V_0\, \subset\, {\mathbb C}^3$ is some subspace of dimension two, then
\begin{equation}\label{b1}
B\,=\, 0\, .
\end{equation}

Take a pair
\begin{equation}\label{tp}
({\mathbb B},\, {\mathbf D})\, ,
\end{equation}
where
\begin{itemize}
\item ${\mathbb B}\, \in\, H^0(X,\, \text{Sym}^2(J^2({\mathbb T})^*)\otimes{\mathcal O}_X(2S))$ is
fiberwise nondegenerate bilinear form on $J^2({\mathbb T})$ with values in $\mathcal{O}_X(2S)$, and

\item ${\mathbf D}$ is a logarithmic connection on $J^2({\mathbb T})$ singular over $S$,
\end{itemize}
such that the following five conditions hold:
\begin{enumerate}
\item For the subbundles $F^1_{\mathbb T},\, F^2_{\mathbb T}$ in \eqref{e11},
$$
\mathbb{B}(F^1_{\mathbb T}\otimes F^1_{\mathbb T})\,=\, 0\ \ \text{ and }\ \
(F^1_{\mathbb T})^\perp \,=\, F^2_{\mathbb T}\, ,
$$
where $(F^1_{\mathbb T})^\perp\, \subset\, J^2({\mathbb T})$ is the subbundle orthogonal to
$F^1_{\mathbb T}$ with respect to $\mathbb{B}$.

\item The section $\mathbb{B}$ is covariant constant with respect to the
logarithmic connection on the vector bundle ${\rm Sym}^2(J^2({\mathbb T})^*)\otimes{\mathcal O}_X(2S)$ induced by
${\mathbf D}$ and the logarithmic connection on $\mathcal{O}_X(2S)$ given by the de Rham differential $d$.

\item ${\mathbf D}(F^1_{\mathbb T})\, = \, F^2_{\mathbb T}\otimes K_X\otimes{\mathcal O}_X(S)$ and
${\mathbf D}(F^2_{\mathbb T})\, =\, J^2({\mathbb T})\otimes K_X\otimes{\mathcal O}_X(S)$.

\item For any $x_i\, \in\, \mathbb S$ (see \eqref{e7}), the residue ${\rm Res}({\mathbf D}, x_i)$
of ${\mathbf D}$ at $x_i$ has eigenvalues $\{-2,\, -1,\, 0\}$.

\item The eigenspace of ${\rm Res}({\mathbf D}, x_i)$ for the eigenvalue $-2$ is the line
$(F^1_{\mathbb T})_{x_i}\, \subset\, J^2({\mathbb T})_{x_i}$ in \eqref{e11}.
The eigenspace of ${\rm Res}({\mathbf D}, x_i)$ for the eigenvalue $-1$ is contained in the
subspace $(F^2_{\mathbb T})_{x_i}\, \subset\, J^2({\mathbb T})_{x_i}$.
\end{enumerate}

In other words, the pair $({\mathbb B},\, {\mathbf D})$ satisfies all properties obtained in
Proposition \ref{prop4}, Corollary \ref{cor-1}, Lemma \ref{lem8} and Proposition \ref{prop5} for
the pair in \eqref{cp} corresponding to the branched $\text{SO}(3,{\mathbb C})$--oper
$(W,\, B_W,\, \{F^W_i\}_{i=1}^2, \, D_W)$ in \eqref{tr2}. However this does not ensure that
$({\mathbb B},\, {\mathbf D})$ defines a branched $\text{SO}(3,{\mathbb C})$--oper. The reason for this
is that the logarithmic connection ${\mathbf D}$ might possess local monodromy around some points
of the subset $\mathbb S$ in \eqref{e7}. On the other hand, if the local monodromy of ${\mathbf D}$
around every point of $\mathbb S$ is trivial, then it can be shown that $({\mathbb B},\, {\mathbf D})$ defines
a branched $\text{SO}(3,{\mathbb C})$--oper (see Remark \ref{reml}).

Take a point $x_i\, \in \, \mathbb S$. Since the eigenvalues of ${\rm Res}({\mathbf D}, x_i)$
are $\{-2,\, -1,\, 0\}$ (the fourth one of the five conditions above), the local monodromy of ${\mathbf D}$ around
$x_i$ is unipotent (meaning $1$ is the only eigenvalue). Let
\begin{equation}\label{el}
L^i_2,\, L^i_1,\, L^i_0\, \subset\, J^2({\mathbb T})_{x_i}
\end{equation}
be the eigenspaces of ${\rm Res}({\mathbf D}, x_i)$ for the eigenvalues $-2,\, -1,\, 0$ respectively, so
\begin{equation}\label{el0}
L^i_2\oplus L^i_1\oplus L^i_0\, =\, J^2({\mathbb T})_{x_i}\, .
\end{equation}
Using
the logarithmic connection $\mathbf D$, we will construct a homomorphism
\begin{equation}\label{el2}
\varphi_i\, \in \, \text{Hom}(L^i_1\otimes (K_X)_{x_i},\, L^i_2\otimes (K^{\otimes 2}_X)_{x_i})
\,=\, \text{Hom}(L^i_1,\, L^i_2\otimes (K_X)_{x_i})\, .
\end{equation}

Take any $$v\, \in\, L^i_1\otimes (K_X)_{x_i}\, \subset\, (J^2({\mathbb T})\otimes K_X)_{x_i}$$
(see \eqref{el}).
Note that ${\mathcal O}_X(-x_i)_{x_i}\,=\, (K_X)_{x_i}$ (see \eqref{ry}). 
Let $\widetilde{v}$ be a holomorphic section of $J^2({\mathbb T})\otimes {\mathcal O}_X(-x_i)$ defined on
some open neighborhood $U$ of $x_i$ such that
\begin{equation}\label{ch1}
\widetilde{v}(x_i)\,=\, v\, ;
\end{equation}
here the identification ${\mathcal O}_X(-x_i)_{x_i}\,=\, (K_X)_{x_i}$ is used. Fix
the open subset $U$ such that $U\bigcap {\mathbb S}\,=\, x_i$.

In particular, $\widetilde{v}$ is a holomorphic section of $J^2({\mathbb T})$ over $U$, and we have
$$
{\mathbf D}(\widetilde{v})\, \in\, H^0(U,\, J^2({\mathbb T})\otimes K_X\otimes{\mathcal O}_X(S))
\,=\, H^0(U,\, J^2({\mathbb T})\otimes K_X\otimes {\mathcal O}_X(x_i))\, .
$$

We will show that ${\mathbf D}(\widetilde{v})$ lies in the image of the natural inclusion map
$$H^0(U,\, J^2({\mathbb T})\otimes K_X\otimes{\mathcal O}_X(-x_i))\, \hookrightarrow\,
H^0(U,\, J^2({\mathbb T})\otimes K_X\otimes{\mathcal O}_X(x_i))\, .$$
For that, first note that the section $\widetilde{v}$ can be expressed as
$$
\widetilde{v}\,=\, f\cdot s_1+ s_2\, ,
$$
where
\begin{itemize}
\item $f$ is a holomorphic function on $U$ with $f(x_i)\,=\, 0$, 

\item $s_1\, \in\, H^0(U,\, J^2({\mathbb T}))$ with $s_1(x_i)\, \in\, L^i_1$ (see \eqref{el}), and

\item $s_2\, \in\, H^0(U,\, J^2({\mathbb T}))$, and it vanishes at $x_i$ of order at least two.
\end{itemize}
Now consider the section
$$
{\mathbf D}(\widetilde{v})\,=\, {\mathbf D}(fs_1)+{\mathbf D}(s_2)\, .
$$
Since $s_2$ vanishes at $x_i$ of order at least two, it follows that
$${\mathbf D}(s_2)\,\in\, H^0(U,\, J^2({\mathbb T})\otimes K_X\otimes{\mathcal O}_X(-x_i))\, .$$
Consequently, to prove that
\begin{equation}\label{h1}
{\mathbf D}(\widetilde{v})\,\in \, H^0(U,\, J^2({\mathbb T})\otimes K_X\otimes{\mathcal O}_X(-x_i))\,
\hookrightarrow\, H^0(U,\, J^2({\mathbb T})\otimes K_X\otimes{\mathcal O}_X(x_i))\, ,
\end{equation}
it suffices to show that ${\mathbf D}(fs_1)\,\in \,H^0(U,\, J^2({\mathbb T})\otimes K_X
\otimes{\mathcal O}_X(-x_i))$.

The Leibniz rule for ${\mathbf D}$ says that
\begin{equation}\label{h2}
{\mathbf D}(fs_1)\,=\, f{\mathbf D}(s_1)+df\otimes s_1\, .
\end{equation}
Since $s_1(x_i)\, \in\, L^i_1$, and $L^i_1$ is the eigenspace of ${\rm Res}({\mathbf D}, x_i)$
for the eigenvalue $-1$, from \eqref{h2} it follows that $${\mathbf D}(fs_1)\,\in\,
H^0(U,\, J^2({\mathbb T})\otimes K_X\otimes{\mathcal O}_X(-x_i))\, .$$
Indeed, if we consider $f{\mathbf D}(s_1)$ and $df\otimes s_1$ as sections of
$(J^2({\mathbb T})\otimes K_X)\vert_U$, then $f{\mathbf D}(s_1)(x_i)\,=\, -v$ by the residue condition, and
$(df\otimes s_1)(x_i)\,=\, v$ by \eqref{ch1}. These imply that the section
$$
{\mathbf D}(fs_1) \,\in\, H^0(U,\, J^2({\mathbb T})\otimes K_X)
$$
vanishes at $x_i$, making it a section of $(J^2({\mathbb T})\otimes K_X\otimes{\mathcal O}_X(-x_i))\vert_U$.

Since ${\mathbf D}(fs_1)\,\in\,
H^0(U,\, J^2({\mathbb T})\otimes K_X\otimes{\mathcal O}_X(-x_i))$, we conclude
that \eqref{h1} holds.

Using the decomposition in \eqref{el0}, the fiber $(J^2({\mathbb T})\otimes K_X\otimes{\mathcal O}_X(-S))_{x_i}$
decomposes as
$$
(J^2({\mathbb T})\otimes K_X\otimes{\mathcal O}_X(-S))_{x_i}
$$
\begin{equation}\label{s2}
\,=\, ((L^i_2\otimes K_X\otimes{\mathcal O}_X(-S))_{x_i})\oplus
((L^i_1\otimes K_X\otimes{\mathcal O}_X(-S))_{x_i})\oplus ((L^i_0\otimes K_X\otimes{\mathcal O}_X(-S))_{x_i})\, .
\end{equation}
For the section ${\mathbf D}(\widetilde{v})$ in \eqref{h1}, let
\begin{equation}\label{s3}
{\mathbf D}(\widetilde{v})^i_2\, \in\, (L^i_2\otimes K_X\otimes{\mathcal O}_X(-S))_{x_i}\,=\, L^i_2\otimes
(K^{\otimes 2}_X)_{x_i}
\end{equation}
be the component of ${\mathbf D}(\widetilde{v})(x_i)$
for the decomposition in \eqref{s2}; recall from \eqref{ry} that
we have ${\mathcal O}_X(-S)_{x_i}\,=\, (K_X)_{x_i}$.

We will now show that the element ${\mathbf D}(\widetilde{v})^i_2\, \in\, L^i_2\otimes
(K^{\otimes 2}_X)_{x_i}$ in \eqref{s3} is
independent of the choice of the section $\widetilde{v}$ in \eqref{ch1} that extends $v$.
To prove this, take any
$$
\widehat{v}\, \in\, H^0(U,\, J^2({\mathbb T})\otimes {\mathcal O}_X(-x_i))
$$
such that $\widehat{v}(x_i)\,=\, v$. So the section $\widetilde{v}-\widehat{v}$ of
$J^2({\mathbb T})\vert_U$ vanishes at $x_i$ of order at least two. Therefore, we can write
$$
\widetilde{v}-\widehat{v}\,=\, f_2s_2+ f_1s_1+f_0s_0\, ,
$$
where $f_0,\, f_1,\, f_2$ are holomorphic functions on $U$ vanishing at $x_i$ of order at least two,
and $s_2(x_i)\, \in\, L^i_2$, $s_1(x_i)\, \in\, L^i_1$, $s_0(x_i)\, \in\, L^i_0$.

It is straightforward
to check that ${\mathbf D}(f_1s_1)$ and ${\mathbf D}(f_0s_0)$ do not contribute to the component
$L^i_2\otimes (K^{\otimes 2}_X)_{x_i}$ in \eqref{s2}; as before, ${\mathcal O}_X(-S)_{x_i}$
is identified with $(K_X)_{x_i}$. Therefore, to prove that
${\mathbf D}(\widetilde{v})^i_2$ in \eqref{s3} is
independent of the choice of the section $\widetilde{v}$, it suffices to show that
${\mathbf D}(f_2s_2)$ also does not contribute to the component
$L^i_2\otimes (K^{\otimes 2}_X)_{x_i}$ in \eqref{s2}. But this follows from the facts that
$s_2(x_i)\, \in\, L^i_2$, and $L^i_2$ is the eigenspace of 
${\rm Res}({\mathbf D}, x_i)$ for the eigenvalue $-2$. Hence we conclude that ${\mathbf D}
(\widetilde{v})^i_2$ is independent of the choice of $\widetilde{v}$.

Now we construct the homomorphism in $\varphi_i$ in \eqref{el2} by sending any $v\, \in\, 
L^i_1\otimes (K_X)_{x_i}$ (as in \eqref{ch1}) to ${\mathbf D}(\widetilde{v})^i_2$ in \eqref{s3}
constructed from $v$.

\begin{theorem}\label{thm2}
The pair $({\mathbb B},\, {\mathbf D})$ in \eqref{tp} defines a
branched ${\rm SO}(3,{\mathbb C})$--oper if and only if 
$\varphi_i\,=\, 0$ for every $x_i\, \in\, \mathbb S$, where $\varphi_i$ is the homomorphism
in \eqref{el2}.
\end{theorem}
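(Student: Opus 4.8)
The plan is to reduce the statement to a question about the \emph{local monodromy} of $\mathbf D$ around the points of $\mathbb S$, and then to a purely Lie-algebraic rigidity statement in $\mathfrak{so}(3,{\mathbb C})$. By the reconstruction carried out in Section \ref{se6.4} (and the criterion recorded in Remark \ref{reml}), the pair $({\mathbb B},\,{\mathbf D})$ defines a branched ${\rm SO}(3,{\mathbb C})$--oper precisely when $\mathbf D$ has trivial local monodromy around every $x_i\in{\mathbb S}$. Indeed, running the procedure of Section \ref{se6.4} on $({\mathbb B},\,{\mathbf D})$ produces the subsheaf ${\mathcal E}\subset J^2({\mathbb T})$ with an induced logarithmic connection ${\mathbb D}''_J$ whose residues all have eigenvalues $0$; such a connection is nonsingular (so that $({\mathcal E},\ldots,{\mathbb D}''_J)$ is a genuine branched ${\rm SO}(3,{\mathbb C})$--oper) exactly when its local monodromy is trivial. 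Thus the first step is to show that $({\mathbb B},\,{\mathbf D})$ defines a branched oper if and only if the local monodromy $M_i$ of $\mathbf D$ around each $x_i$ is the identity.

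Next I would record two structural facts about $M_i$. Since the eigenvalues of ${\rm Res}({\mathbf D},x_i)$ are the integers $-2,\,-1,\,0$ (condition (4)), the monodromy $M_i$ is unipotent, so $N_i:=\log M_i$ is nilpotent. Since $\mathbf D$ preserves ${\mathbb B}$ (condition (2)) and the de Rham connection on ${\mathcal O}_X(2S)$ has trivial local monodromy, $M_i$ lies in ${\rm SO}(3,{\mathbb C})$ and $N_i\in\mathfrak{so}(3,{\mathbb C})$. Moreover, a standard analysis of the local solutions (Levelt's normal form) shows that $N_i$ is strictly raising with respect to the eigenspace decomposition $J^2({\mathbb T})_{x_i}=L^i_2\oplus L^i_1\oplus L^i_0$ of \eqref{el}--\eqref{el0}; that is, $N_i(L^i_0)\subset L^i_1\oplus L^i_2$, $N_i(L^i_1)\subset L^i_2$ and $N_i(L^i_2)=0$. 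Here conditions (1), (3) and (5) are used to align the flag $F^1_{\mathbb T}\subset F^2_{\mathbb T}$ with the eigenspaces (so that $(F^1_{\mathbb T})_{x_i}=L^i_2$ and $(F^2_{\mathbb T})_{x_i}=L^i_1\oplus L^i_2$) and to fix the $\mathfrak{so}(3,{\mathbb C})$--weight structure.

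The key point is then the rigidity of nilpotents in $\mathfrak{so}(3,{\mathbb C})=\mathfrak{sl}(2,{\mathbb C})$ encoded in \eqref{b1}: a nonzero nilpotent has a one-dimensional kernel, so up to scale there is a unique raising nilpotent, namely the principal one. Consequently the $L^i_0\to L^i_2$ block of $N_i$ vanishes, while its $L^i_0\to L^i_1$ and $L^i_1\to L^i_2$ blocks are proportional; in particular $N_i=0$ if and only if the top block $L^i_1\to L^i_2$ of $N_i$ vanishes. It therefore remains to identify this block with $\varphi_i$. For this I would finish the local computation already begun before the statement: writing ${\mathbf D}=d+\left(\frac{A_0}{z}+A_1+A_2 z+\cdots\right)dz$ in a holomorphic frame near $x_i$, the very calculation showing ${\mathbf D}(\widetilde v)\in H^0(U,\,J^2({\mathbb T})\otimes K_X\otimes{\mathcal O}_X(-x_i))$ identifies $\varphi_i$ with the $L^i_1\to L^i_2$ block of $A_1$. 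Because $L^i_2$ sits at the top of the eigenvalue filtration, with nothing above it, this resonant coefficient is a nonzero multiple of the corresponding block of $N_i$; hence $\varphi_i=0$ if and only if the $L^i_1\to L^i_2$ block of $N_i$ is zero, i.e. if and only if $N_i=0$.

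Combining the three steps yields: $\varphi_i=0$ for every $i$ $\iff$ all $M_i={\rm Id}$ $\iff$ $({\mathbb B},\,{\mathbf D})$ is a branched ${\rm SO}(3,{\mathbb C})$--oper; the ``only if'' direction is immediate once one notes that a branched oper arises, via Proposition \ref{prop4} and Theorem \ref{thm1}, from the nonsingular connection $D_W$, whose local monodromy on a disk is trivial. I expect the main obstacle to be precisely this last identification: pinning down, with the correct normalization, that the top resonant coefficient of $\mathbf D$ computes the $L^i_1\to L^i_2$ component of $\log M_i$, and verifying that conditions (3) and (5) force the off-diagonal behaviour of the residue so that $N_i$ is genuinely of raising type rather than merely nilpotent.
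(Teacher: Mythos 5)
Your proposal is correct, and it reaches Theorem \ref{thm2} by a genuinely different route than the paper. The paper's proof never mentions monodromy: it runs the reconstruction of Section \ref{se6.4} on $({\mathbb B},\,{\mathbf D})$, producing via \eqref{t1} and \eqref{t2} the sheaf ${\mathcal E}\subset J^2({\mathbb T})$ with induced connection ${\mathbf D}''$ whose residues are nilpotent, proves directly (this is \eqref{z1}) that ${\rm Res}({\mathbf D}'',x_i)$ annihilates $(\widetilde{F}^1_{\mathbb T})_{x_i}$, maps $(\widetilde{F}^2_{\mathbb T})_{x_i}$ into $(\widetilde{F}^1_{\mathbb T})_{x_i}$, and induces exactly $\varphi_i$ on $(\widetilde{F}^2_{\mathbb T})_{x_i}/(\widetilde{F}^1_{\mathbb T})_{x_i}$, and then applies the rigidity \eqref{b1} to force ${\rm Res}({\mathbf D}'',x_i)=0$ when $\varphi_i=0$; ``defines a branched oper'' is throughout equated with ``${\mathbf D}''$ is nonsingular'', a residue statement. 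You instead equate it with triviality of the local monodromy $M_i$ (legitimate: your justification is exactly the argument of Remark \ref{reml}, which depends only on the construction of ${\mathbf D}''$ and not on the theorem), and then study $N_i=\log M_i$ by the resonance calculus at a regular singular point. Your flagged technical steps do check out under the paper's conventions: writing ${\mathbf D}=d+A(z)\,\frac{dz}{z}$ in an eigenframe of the residue, the non-removable coefficients are precisely the blocks $L^i_0\to L^i_1$ and $L^i_1\to L^i_2$ in $A_1$ and $L^i_0\to L^i_2$ in $A_2$; the first-order resonant part is invariant under holomorphic gauge; the fundamental solution $z^{-A_0}z^{-R}$, with $R$ the sum of resonant blocks, gives $N_i=-2\pi\sqrt{-1}\,R$, so $N_i$ is indeed raising and its $L^i_1\to L^i_2$ block is a nonzero multiple of $\varphi_i$ (which, by the computation preceding \eqref{s3}, is the $L^i_1\to L^i_2$ block of $A_1$); and ${\mathbb B}$-skew-symmetry kills the $L^i_0\to L^i_2$ block of $N_i$ and ties the remaining two blocks together, so $N_i=0$ iff $\varphi_i=0$ --- the same $\mathfrak{so}(3,{\mathbb C})=\mathfrak{sl}(2,{\mathbb C})$ rigidity as \eqref{b1}. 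As for what each approach buys: the paper's argument is purely algebraic --- linear algebra of residues of connections induced on Hecke modifications, with no normal forms and no solving of differential equations --- and it directly exhibits the oper bundle as ${\mathcal E}$; yours makes the geometric content transparent (the obstruction is exactly the unipotent part of the local monodromy, which is what flatness of a genuine branched oper requires to vanish), at the cost of the Levelt-type input you yourself identify: gauge invariance of the first resonant coefficient, the formula for $\log M_i$ in terms of it, and a non-canonical identification of the local solution space with the fiber $J^2({\mathbb T})_{x_i}$, all of which the residue computation avoids.
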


\begin{proof}
We first invoke the algorithm, described at the end of Section \ref{se6.4}, to recover
a branched $\text{SO}(3,{\mathbb C})$--oper from the corresponding logarithmic connection and
the twisted bilinear form on $J^2({\mathbb T})$.

Let $\mathcal F$ be the
holomorphic vector bundle on $X$ that fits in the short exact sequence of coherent analytic sheaves on $X$
\begin{equation}\label{t1}
0\, \longrightarrow\, {\mathcal F} \, \longrightarrow\, J^2({\mathbb T})\, \longrightarrow\,
\bigoplus_{i=1}^d J^2({\mathbb T})_{x_i}/L^i_0 \, \longrightarrow\, 0\, ,
\end{equation}
where $L^i_0$ is the eigenspace in \eqref{el}.
Applying the criterion for a logarithmic connection on the vector
bundle $\mathcal W$ in \eqref{ed} to induce a logarithmic connection on the vector
bundle $V$ in \eqref{ed}, the logarithmic connection ${\mathbf D}$
on $J^2({\mathbb T})$ induces a logarithmic connection
${\mathbf D}'$ on $\mathcal F$; the eigenvalues of the residue ${\rm Res}({\mathbf D}', x_i)$
of ${\mathbf D}'$ at any $x_i\, \in\, {\mathbb S}$ are $(-1,\, 0,\, 0)$.
For each point $x_i\, \in \, \mathbb S$, let $M_i\, \subset\, {\mathcal F}_{x_i}$ be the
eigenspace for the eigenvalue $0$ of the residue ${\rm Res}({\mathbf D}', x_i)$. Let $\mathcal E$ be the
holomorphic vector bundle on $X$ that fits in the short exact sequence
\begin{equation}\label{t2}
0\, \longrightarrow\, {\mathcal E} \, \longrightarrow\, {\mathcal F}\, \longrightarrow\,
\bigoplus_{i=1}^d {\mathcal F}_{x_i}/M_i \, \longrightarrow\, 0
\end{equation}
of coherent analytic sheaves on $X$. Applying the above mentioned criterion
we conclude that the logarithmic connection ${\mathbf D}'$ induces a logarithmic connection
${\mathbf D}''$ on $\mathcal E$; the eigenvalues of the residue ${\rm Res}({\mathbf D}'', x_i)$
of ${\mathbf D}''$ at $x_i\, \in\, {\mathbb S}$ are $(0,\, 0,\, 0)$. 

Although all the eigenvalues of ${\rm Res}({\mathbf D}'', x_i)$ are zero, the residue
${\rm Res}({\mathbf D}'', x_i)$ need not vanish in general; it can be
a nilpotent endomorphism. We shall investigate the
residue ${\rm Res}({\mathbf D}'', x_i)$.

Let
$$
\iota\,:\, {\mathcal E} \hookrightarrow\, J^2({\mathbb T})
$$
be the inclusion map obtained from the injective homomorphisms in \eqref{t1} and \eqref{t2}.
Since $\iota$ is an isomorphism over ${\mathbb X}\,=\, X\setminus\mathbb S$,
any holomorphic subbundle $V$ of $J^2({\mathbb T})$ generates a holomorphic subbundle $\widetilde{V}$
of ${\mathcal E}$. This $\widetilde{V}$ is uniquely determined by the condition that 
the space of holomorphic sections
of $\widetilde{V}$ over any open subset $U\, \subset\, X$ is the space of
holomorphic sections $s$ of ${\mathcal E}\vert_U$ such that $s\vert_{U\cap (X\setminus{\mathbb S})}$
is a section of $V$ over $U\cap (X\setminus{\mathbb S})$.

Let $\widetilde{F}^1_{\mathbb T}$ and $\widetilde{F}^2_{\mathbb T}$ be the holomorphic subbundles
of ${\mathcal E}$ corresponding to the holomorphic subbundles $F^1_{\mathbb T}$ and
$F^2_{\mathbb T}$ of $J^2({\mathbb T})$ in \eqref{e11}.

It is straightforward to check that for any point $x\, \in\, \mathbb S$,
\begin{equation}\label{z1}
{\rm Res}({\mathbf D}'', x_i)((\widetilde{F}^1_{\mathbb T})_{x_i})\,=\, 0\ \
\text{ and }\ \ {\rm Res}({\mathbf D}'', x_i)((\widetilde{F}^2_{\mathbb T})_{x_i})\, \subseteq\,
(\widetilde{F}^1_{\mathbb T})_{x_i}\, .
\end{equation}
Moreover, the homomorphism $(\widetilde{F}^2_{\mathbb T})_{x_i}/(\widetilde{F}^1_{\mathbb T})_{x_i}
\,\longrightarrow\, (\widetilde{F}^1_{\mathbb T})_{x_i}$ induced by ${\rm Res}({\mathbf D}'', x_i)$
coincides with the homomorphism $\varphi_i$ in \eqref{el2}.

If $({\mathbb B},\, {\mathbf D})$ in \eqref{tp} defines a
branched ${\rm SO}(3,{\mathbb C})$--oper, then ${\mathbf D}''$ is a nonsingular connection, meaning
${\rm Res}({\mathbf D}'', x_i)\,=\, 0$ for every $x_i\, \in\, \mathbb S$, and consequently,
$\varphi_i\,=\, 0$ for all $x_i\, \in\, \mathbb S$.

Conversely, if $\varphi_i\,=\, 0$ for all $x_i\, \in\, \mathbb S$, then using \eqref{z1} it follows
that ${\rm Res}({\mathbf D}'', x_i)((\widetilde{F}^2_{\mathbb T})_{x_i})\, =\,0$. Now from
\eqref{b1} we conclude that ${\rm Res}({\mathbf D}'', x_i)\,=\, 0$. Hence the
logarithmic connection ${\mathbf D}''$ is nonsingular. Therefore, $({\mathbb B},\,
{\mathbf D})$ defines a branched ${\rm SO}(3,{\mathbb C})$--oper. This completes the proof.
\end{proof}

\begin{remark}\label{reml}
Assume that the local monodromy of the logarithmic connection ${\mathbf D}$ around every point of
$\mathbb S$ is trivial. Then the local monodromy of the logarithmic connection
${\mathbf D}''$ around every point of $\mathbb S$ is trivial, because the monodromies of
${\mathbf D}$ and ${\mathbf D}''$ coincide. Consider the following four facts:
\begin{enumerate}
\item For every point $x_i\, \in\, \mathbb S$, the eigenvalues of ${\rm Res}({\mathbf D}'', x_i)$
are $(0,\, 0,\, 0)$.

\item The local monodromy of ${\mathbf D}''$ around every $x_i\, \in\, \mathbb S$ is trivial.

\item The local monodromy, around $x_i\, \in\, \mathbb S$, 
of any logarithmic connection $D^0$ lies in the conjugacy class of $\exp (-2\pi\sqrt{-1}{\rm Res}(D^0, x_i))$.

\item $\exp (-2\pi\sqrt{-1} A)\, \not=\, I$ for any nonzero nilpotent complex matrix $A$.
\end{enumerate}
These together imply that ${\rm Res}({\mathbf D}'', x_i)\,=\, 0$ for every point $x_i\, \in\, \mathbb S$.
Hence ${\mathbf D}''$ is a nonsingular connection, and $({\mathbb B},\, {\mathbf D})$ in \eqref{tp} defines a
branched ${\rm SO}(3,{\mathbb C})$--oper.
\end{remark}

\section*{Acknowledgements}

This work has been supported by the French government through the UCAJEDI Investments in the 
Future project managed by the National Research Agency (ANR) with the reference number 
ANR2152IDEX201. The first-named author is partially supported by a J. C. Bose Fellowship, and 
school of mathematics, TIFR, is supported by 12-R$\&$D-TFR-5.01-0500. 

%%%%%%%%%%%%%%%%%%%%%%%%%%%%%%%%%%%%%%%%%%%%%%%%%%%%%%%%%%%%%%%%%%%%%%%%%%%%%%%%%%%%%%

\end{document}